\theoremstyle{plain}
\newtheorem{teo}{Theorem}[section]
\newtheorem{theo}[teo]{Theorem}
\newtheorem{coro}[teo]{Corollary}
\newtheorem{lema}[teo]{Lemma}
\theoremstyle{remark}
\theoremstyle{definition}
\newtheorem{defi}[teo]{Definition}
\newtheorem{obse}[teo]{Observation}
\newtheorem{example}[teo]{Example}
\newcommand{\modGX}{{}_{(G,\/\,\Bbbk [X])}\mathcal M}
\newcommand{\modGR}{{}_{(G,R)}\,\mathcal M}
\newcommand{\modKR}{{}_{(K,R)}\,\mathcal M}
\newcommand{\modK}{{}_{K}\mathcal M}
\newcommand{\modG}{{}_{G}\mathcal M}
\newcommand{\modKGb}{{}_{(K,\,\/G)}\mathcal M}
\newcommand{\qqed}{\hfill $\Box$}
\begin{document} \title[Reductive and unipotent actions]
{Reductive and unipotent actions of affine groups} \author{Walter Ferrer Santos} \address{Facultad
de Ciencias\\Universidad de la Rep\'ublica\\ Igu\'a
4225\\11400 Montevideo\\Uruguay\\} \thanks{The authors would
like to thank Csic-UDELAR and Anii for their partial
support} \email{wrferrer@cmat.edu.uy} \author{Alvaro
Rittatore} \email{alvaro@cmat.edu.uy}
\begin{abstract}
  We present a generalized version of classical geometric invariant
  theory {\em \`a la Mumford} where we consider an affine algebraic
  group $G$ acting on a specific affine algebraic variety $X$.  We
  define the notions of \emph{linearly reductive} and of
  \emph{unipotent action} in terms of the $G$ fixed point functor 
  in the category of $(G,\Bbbk [X])$--modules. In the case that
  $X=\{\star\}$ we recuperate the concept of lineraly reductive and of
  unipotent group.  We prove in our ``\emph{relative}'' context some
  of the classical results of GIT such as: existence of quotients,
  finite generation of invariants, Kostant--Rosenlicht's theorem and
  Matsushima's criterion. We also present a partial description of the
  geometry of such \emph{linearly reductive} actions.
\end{abstract} \date{\today}
\maketitle

\section{Introduction}
\label{section:intro}
Given a fixed affine algebraic group $G$, classical
geometric invariant theory {\em \`a la Mumford}, is the
study of the interplay between the following three different
mathematical themes: the geometric properties of the actions
of the group $G$ on algebraic varieties, the structure of
the category $\modG$ of all rational $G$--modules and
thirdly the inner structure of the group $G$ -- or of its
Lie algebra.

Even though this theory has been extremely successful 
in the degree of generality that was originally
conceived for, it has the handicap of not being very
sensitive to the particular properties of a specific
action. 

In this paper we develop the initial features of what we call a {\em
  relative theory}.  We will be concerned with pairs $(G,X)$ where $X$
is a variety equipped with a $G$--action, and the group $G$ acts
``reductively'' in the category of $(G,\Bbbk [X])$--modules, even
though it need not be {\em reductive}. In this context (weaker than
the standardt one) we still obtain for $X$ and other varieties
naturally related to it, some of the usual results of classical
invariant theory.

The following theorem, due to E.~Cline, B.~Parshall and
L.~Scott and that appeared in \cite{kn:CPS}, conspicuously
illuminates this perspective of the theory and was a strong
motivation for the considerations that follow.

{\em Let $G$ be an affine algebraic group and $K \subseteq
  G$ a closed subgroup, then the homogeneous space $G/K$ is
  an affine variety if and only if all the exact sequences
  in the category $\modKGb$ split in $\modK$ --- in the
  language of the present paper, if and only if the action of
  $K$ on $G$ is linearly reductive.}

In paralel with the development of the concept of linearly reductive
action of $G$ on $X$ it is natural to define in the same context, the
notion of unipotent action.  For unipotent actions we generalize the
classical Kostant--Rosenlicht result on the closedness of the orbits
of $G$ in $X$.

If we apply the definitions of linearly reductive action or of
unipotent action (see Definitions \ref{defi:crucial2} and
\ref{defi:unipotentaction}) to the situation of a pair $(G,\{\star\})$
where $\{\star\}$ is the one point variety, we obtain the concept of
linearly reductive or of unipotent group.

As we are dealing with the properties of {\em pairs}
$(G,X)$, one of the main advantages of the relative point of
view is that we can formulate and prove {\em transitivity}
results as the following:

{\em Let $H \subseteq G$ be a closed subgroup of the group
$G$ and assume that the action of $G$ on $X$ is linearly
reductive, and that the action of $H$ on $G$ is also
linearly reductive, then the action of $H$ on $X$ is
linearly reductive}.

This result, when applied to the case that $X$ is the variety
consisting of a single point, yields the Matsushima's criterion in
characteristic zero, see for example \cite{kn:matsu,kn:CPS}, and in
such a way the classical Matsushima's criterion can be profitably
interpreted as a transitivity result.\footnote{We plan to develop in
  future work, the theory of \emph{geometrically reductive actions},
  this will guarantee the above result for the situation of positive
  characteristic.}

Next, we present a brief description of the contents of this
paper.

In Section \ref{section:basicdef} we establish in Definitions
\ref{defi:crucial2} the concepts of linearly reductive action of an
affine algebraic group $G$ on a commutative and rational $G$--module
algebra $R$, and in parallel on an affine algebraic variety $X$.

In Theorem \ref{theo:characterizations} we present different
characterizations of the concept of a linearly reductive
action, some of them with a cohomological
flavor and others in terms of total integrals
--compare with \cite{kn:doi1,kn:doi2,kn:doi3}-- and of relative
splittings. A version of the concept of Reynolds operator also appears
related to the linear reductivity of the action. 

In the particular case of a linearly reductive action of a unipotent
group $U$, we show that the target variety $X$ is $U$--isomorphic with
$U \times S$ where $S$ is affine and acted trivially by the group.
Hence, up to trivial factors, the only linearly reductive action of a
unipotent group is the action of the group on itself by translations,
see Lemma \ref{theo:caraclrunip} and Theorem \ref{theo:caraclrunip2}.

In Section \ref{section:grsubgr} we consider the 
case of a group $G$ and a closed subgroup $K \subseteq G$
acting by translations. We encounter the well known theorem of 
Cline, Parshall and Scott that in our language means that the action of $K$ on $G$ is linearly reductive if and only if $G/K$ is an affine variety. 

We finish the section proving
that in the same situation than above, the linear reductivity of the
action of $K$ on $G$ together with the linear reductivity of the
action of $G$ on $G/K$ are equivalent to the linear reductivity of $K$
(Theorem \ref{theo:homo}).

In Section \ref{section:transitivity} we deal with transitivity
results. In Theorem \ref{theo:matsu}, we prove the generalization of
Matsushima's criterion considered above. Moreover in the same theorem
we prove that in the case that $K \triangleright G$ is a normal subgroup
and $R$ is a $G$--module algebra, the linear reductivity of the action of
$K$ on $R$ and that of the action of $G/K$ on ${}^K\!R$, are
equivalent to that of the action of $G$ on $R$.

In Section \ref{section:levidecomposition} we use the above
mentioned transitivity results in order to prove a result
similar to -- but weaker than -- the classical Levi
decomposition of an algebraic group, and use it to give a characterization of
linearly reductive actions of a group $G$ on a variety $X$
(see Theorem \ref{theo:unipot_description}).

In Section \ref{section:invquo} we prove that the two main initial
results of classical geometric invariant theory hold for our weaker
context of linearly reductive \emph{actions}: the finite generation of
the invariants, and the existence of semi--geometric quotients, see
Theorems \ref{theo:fginvariants} and \ref{theo:liquotiens}.

In Section \ref{section:unipotent}, we define the relative
concept of unipotent action of an affine algebraic group on
an affine variety $X$. This is done in terms of the
faithfulness of the $G$--fixed point functor in the category
 $\modGX$. We prove that an action that is unipotent and
linearly reductive reduces to the action of $G$ on itself by
translations, and we also establish a generalization of
Kostant--Rosenlicht theorem: if the action of $G$ on $X$ is
unipotent, then the orbits of the action of $G$ on $X$ are
closed (see Theorem \ref{theo:genkr}).

\medskip

\noindent {\sc Notations:}

All the geometric and algebraic objects will be defined over
an algebraically closed field $\Bbbk $ and all the
algebras considered will be commutative. The geometric
actions are on the right and the algebraic actions on the
left. 

If $R \in \modG$ is a rational commutative $G$--module
algebra, then $\modGR$ is the category of rational
$(G,R)$--modules whose objects $M$ are rational $G$--modules
and $R$--modules subject to the following compatibility
condition: if $x \in G, r \in R, m \in M$, then $x\cdot
(rm)= (x\cdot r)(x\cdot m)$, with the obvious morphisms.

Given a right regular action $X \times G \rightarrow X$ of
$G$ on the affine variety $X$ (i.e. $X$ is a {\em regular
  $G$--variety} or simply a {\em $G$--variety}), then
$R=\Bbbk [X]$ is a rational $G$--module algebra and we
write ${}_{(K,X)}\mathcal M$ instead of $\modGX$.

If $M$ is a rational $G$--module, then $M \mapsto {}^G\!M
\subseteq M$ is the covariant left exact $G$--fixed part
functor $\modG \rightarrow \modG$.

Rational $G$--modules are $\Bbbk [G]$-comodules with
(right) coaction $\chi: M \rightarrow M \otimes \Bbbk [G]$ characterized by the condition: $\chi(m)=\sum m_0
\otimes m_1 \in M \otimes \Bbbk [G]$, if and only if $x
\cdot m = \sum m_0 m_1(x)$ for all $x \in G$ -- we use
Sweedler's notation.  If $G$, $R$ are as above, then $M
\in \modGR$, if and only if $\chi_M(r\cdot m)=\sum r_0\cdot
m_0 \otimes r_1m_1$.

If $G$ is an affine algebraic group $\mathcal {R}_u(G)$
denotes the unipotent radical of $G$.  In case that the
unipotent radical is trivial we say the $G$ is reductive and
it is well known that this is equivalent to be linearly
reductive in characteristic zero.


\section{Reductive actions}
\label{section:basicdef}

The following definition of \emph{linearly reductive action}
will be central to our considerations.

\begin{defi}\label{defi:crucial2}\
\begin{enumerate}
\item Let $G$ be an affine algebraic group and $R$ a
rational $G$--module algebra. We say that the action of $G$
on $R$ is \emph{linearly reductive} if for every triple $(M,
J, \lambda)$ where $M\in \modGR$, $J \subseteq R$ is a
$G$--stable ideal and $\lambda:M \rightarrow R/J$ is a
surjective morphism of $(G,R)$--modules; there exists an
element $m \in {}^GM$, such that $\lambda(m)=1+J \in R/J$.
In the context above, if the action of $G$ on $R$ is given,
we say that $(G,R)$ is a linearly reductive pair.
\item In the case that $R= \Bbbk [X]$ and the action of
$G$ on $R$ is linearly reductive we say that the action of
$G$ on $X$ is linearly reductive and also that the pair
$(G,X)$ is linearly reductive.
\end{enumerate}
\end{defi}

\begin{defi}\label{defi:integral} Let $G$ be an affine
algebraic group and $R$ a rational $G$--module algebra. A
\emph{(left) integral} for $G$ with values in $R$ is an
$G$--equivariant linear map $\sigma:\Bbbk [G] \rightarrow
R$. The integral $\sigma$ is said to be \emph{total} if
$\sigma(1)=1$.
\end{defi}

\begin{obse}\label{obse:intinj} \
\begin{enumerate}
\item The $G$--equivariance condition $\sigma(x.f)=x.\sigma(f)$ of the
  integral $\sigma$, can be expressed in terms of the commutativity of
  the diagram below: 
\[\xymatrix{\Bbbk [G]\ar[r]^{\sigma}\ar[d]_{\Delta}&
R\ar[d]^{\chi}&\\ \Bbbk [G] \otimes \Bbbk [G]\ar[r]_-{\sigma \otimes \operatorname{id}} & R \otimes
\Bbbk [G] & \quad\text{i.e.}\quad \sum \sigma(f_1) \otimes f_2=
\sum \sigma(f)_0 \otimes \sigma(f)_1,}\] where $\Delta$ is the
comultiplication in $\Bbbk [G]$ and $\chi$ is the coaction in $R$.
\item Recall that in the situation above
there exists a total integral for $G$ with values in $R$, if
and only if $R$ is injective as an object in $\modG$ (see
for example \cite{kn:CPS,kn:doi1,kn:doi2,kn:doi3,kn:nosotros}).
\end{enumerate}
\end{obse}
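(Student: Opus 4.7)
The plan is to treat the two parts separately. For (1), recall that $\Bbbk[G]$ viewed as a rational left $G$-module under right translation is precisely the right $\Bbbk[G]$-comodule whose coaction is the comultiplication $\Delta$. A $\Bbbk$-linear map between rational $G$-modules is $G$-equivariant if and only if it intertwines the coactions; applied to $\sigma:\Bbbk[G]\to R$, with source coaction $\Delta$ and target coaction $\chi$, this condition reads $\chi\circ\sigma=(\sigma\otimes\id)\circ\Delta$, which is exactly the square in the statement. Writing it in Sweedler notation, with $\Delta(f)=\sum f_1\otimes f_2$, yields the identity $\sum\sigma(f_1)\otimes f_2 = \sum\sigma(f)_0\otimes\sigma(f)_1$.

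For (2), I would argue in both directions. Assume first that $R$ is injective in $\modG$. The inclusion $\iota:\Bbbk\hookrightarrow\Bbbk[G]$, $1\mapsto 1_{\Bbbk[G]}$, is $G$-equivariant because $\Delta(1_{\Bbbk[G]})=1\otimes 1$, and the map $\Bbbk\to R$, $1\mapsto 1_R$, is $G$-equivariant because $R$ is a $G$-module algebra. Injectivity of $R$ in $\modG$ then extends the latter along $\iota$ to a $G$-equivariant $\sigma:\Bbbk[G]\to R$ with $\sigma(1)=1$; by construction this $\sigma$ is a total integral.

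Conversely, given a total integral $\sigma$, the plan is to exhibit $R$ as a direct summand in $\modG$ of the cofree rational $G$-module $R\otimes\Bbbk[G]$ (with coaction $\id\otimes\Delta$), which is injective as a coinduced object. The coaction $\chi:R\to R\otimes\Bbbk[G]$ is a $G$-equivariant embedding by the counit axiom, so it suffices to produce a $G$-equivariant retraction. Using the antipode $S$ of the Hopf algebra $\Bbbk[G]$, set
\[
\pi(r\otimes f)=\sum r_0\,\sigma\bigl(S(r_1)\,f\bigr),
\]
where $\chi(r)=\sum r_0\otimes r_1$. A Sweedler calculation combining coassociativity, the antipode identity $\sum S(h_1)h_2=\epsilon(h)1_{\Bbbk[G]}$, the counit axiom, and $\sigma(1)=1$ yields $\pi\circ\chi=\id_R$; a second computation, invoking the equivariance of $\sigma$ and the Hopf identity $\Delta\circ S=(S\otimes S)\circ\tau\circ\Delta$, shows that $\pi$ is $G$-equivariant. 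The main technical point, and the only real obstacle, is this latter verification of equivariance of $\pi$; both calculations are standard and are carried out in the Hopf-algebraic references cited in the statement.
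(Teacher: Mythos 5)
Your argument is correct: part (1) is exactly the translation of $G$-equivariance into the comodule language, and for part (2) you reproduce the standard Doi-type argument (extend $1\mapsto 1_R$ along $\Bbbk\hookrightarrow\Bbbk[G]$ for one direction; split $\chi:R\to R\otimes\Bbbk[G]$ off the cofree comodule via $\pi(r\otimes f)=\sum r_0\,\sigma(S(r_1)f)$ for the other), and both Sweedler computations you defer to do go through. The paper itself offers no proof of this observation, only citations to the very references whose argument you have written out, so your route coincides with the intended one.
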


\begin{theo}\label{theo:characterizations} Let $G$ an affine
algebraic group and $R$ a rational $G$--module
algebra. Then, the following conditions are equivalent:
\begin{enumerate}
\item The action of $G$ on $R$ is linearly reductive.
\item If $\varphi:M \rightarrow N$ is a surjective morphism
in the category $\modGR$, then $\varphi\bigl({}^G\!M\bigr)={}^G\!N$.
\item If $M \in \modGR$, then the rational cohomology groups
$\operatorname{H}^n(G, M)=\{0\}$ for all $n \neq 0$
\emph{(}the functors $M \mapsto \operatorname{H}^n(G,M)$ are
the derived functors of $M \mapsto {}^G\!M$, see
\cite{kn:ho,kn:jantzen}\emph{)}.

\item There exists a total integral $\sigma:\Bbbk [G]
\rightarrow R$.

\item The $G$--module algebra $R$ is an injective object in
the category $\modG$.

\item Every object $M \in \modGR$ is injective in $\modG$.

\end{enumerate}
\end{theo}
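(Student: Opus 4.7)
The plan is to close the cycle
$(1) \Leftrightarrow (2) \Leftrightarrow (3) \Rightarrow (6) \Rightarrow (5) \Leftrightarrow (4) \Rightarrow (6) \Rightarrow (2)$,
using Observation \ref{obse:intinj}(2) for $(4) \Leftrightarrow (5)$. The equivalence $(1) \Leftrightarrow (2)$ is elementary: $(2) \Rightarrow (1)$ is a direct application to $\lambda$ and the invariant $1+J$. For $(1) \Rightarrow (2)$, given $\varphi : M \twoheadrightarrow N$ in $\modGR$ and $n \in {}^G\!N$, the cyclic submodule $Rn \subseteq N$ is $(G,R)$-stable because $n$ is $G$-fixed, and $Rn \cong R/J$ with $J = \operatorname{Ann}_R(n)$ a $G$-stable ideal; restricting $\varphi$ to $\varphi^{-1}(Rn) \to R/J$ reduces to hypothesis (1) and yields an invariant preimage of $n$.

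For $(2) \Leftrightarrow (3)$, one direction is the usual long exact sequence argument. For $(2) \Rightarrow (3)$ I use dimension shifting: equip $I := M \otimes_\Bbbk \Bbbk[G]$ with the diagonal $G$-action and with $R$ acting on the first factor; then $\iota : m \mapsto m \otimes 1$ is a morphism in $\modGR$ since $1 \in \Bbbk[G]$ is $G$-fixed. The Hopf-algebraic twist $m \otimes f \mapsto \sum m_0 \otimes m_1 f$ identifies $I$ as a $G$-module with $M_{\mathrm{triv}} \otimes \Bbbk[G]$, a direct sum of copies of the injective cogenerator $\Bbbk[G]$ of $\modG$; hence $I$ is $\modG$-injective. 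The cokernel $Q$ lies in $\modGR$, so the long exact sequence together with (2) and the $\modG$-acyclicity of $I$ forces $H^1(G,M) = 0$; iterating with $Q$ in place of $M$ gives $H^n(G,M) = 0$ for all $n \geq 1$.

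For $(3) \Rightarrow (6)$: for any $M \in \modGR$ and finite-dimensional $V \in \modG$ the tensor $V^* \otimes M$ lies in $\modGR$, so $\operatorname{Ext}^n_G(V, M) \cong H^n(G, V^* \otimes M) = 0$ by (3); since $\modG$ is locally Noetherian, vanishing of $\operatorname{Ext}^1_G$ against finite-dimensional test modules implies injectivity. Then $(6) \Rightarrow (5)$ is trivial (take $M=R$), and $(6) \Rightarrow (2)$ because any short exact sequence in $\modGR$ with $\modG$-injective kernel splits in $\modG$, so ${}^G(-)$ preserves surjectivity. For $(4) \Rightarrow (6)$, with the same $I$ as before, the formula $r(m \otimes f) := \sigma(f) \cdot m$ defines a $G$-linear retraction of $\iota$: $G$-equivariance uses the equivariance of $\sigma$ and the $(G,R)$-compatibility on $M$, while $r \circ \iota = \id$ uses $\sigma(1) = 1$. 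Thus $M$ is a $\modG$-summand of the injective $I$, hence itself $\modG$-injective.

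The main obstacle is Step 2: finding a $(G,R)$-embedding of $M$ into a $\modG$-injective. The naive choice of letting $G$ act only on the $\Bbbk[G]$-factor of $M \otimes \Bbbk[G]$ fails $(G,R)$-compatibility; the diagonal $G$-action, combined with the Hopf twist that reveals $I$ as a cofree $G$-module, is what simultaneously makes $I$ a member of $\modGR$ and an injective in $\modG$. Once this $I$ is isolated, the same module drives both the dimension shifting in Step 2 and the total-integral retraction in the last step, unifying the cohomological and the integral-theoretic sides of the theorem.
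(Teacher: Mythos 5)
Your proof is correct, but it takes a genuinely different route from the paper's in its second half. The equivalence $(1)\Leftrightarrow(2)$ is handled essentially as in the paper (the paper works with the cyclic $(G,R)$--submodule of $M$ generated by a preimage of $n$ and maps it onto $R/J$; you work with $\varphi^{-1}(Rn)\to Rn\cong R/J$, which avoids the well--definedness check the paper has to make for $\lambda$). After that the approaches diverge. The paper proves $(4)\Rightarrow(2)$ by explicitly constructing the Reynolds operator $p_M(m)=\sum\sigma(\mathcal S m_1)\cdot m_0$ and checking its naturality, and proves $(1)\Rightarrow(6)$ by a direct extension argument on $\operatorname{Hom}_{\Bbbk}(W,M)\to\operatorname{Hom}_{\Bbbk}(V,M)$ plus a maximality/Zorn step; it dismisses $(2)\Leftrightarrow(3)$ as clear and quotes Doi's theorem for $(4)\Leftrightarrow(5)$. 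You instead organize everything around the single object $I=M\otimes\Bbbk[G]$ with diagonal action, which via the standard untwisting isomorphism is a cofree, hence $\modG$--injective, member of $\modGR$: it drives the dimension shifting for $(2)\Rightarrow(3)$, the Ext--vanishing argument for $(3)\Rightarrow(6)$ (where the reduction to finite--dimensional test objects plays the role of the paper's Zorn step), and the total--integral retraction $m\otimes f\mapsto\sigma(f)\cdot m$ for $(4)\Rightarrow(6)$; then $(6)\Rightarrow(2)$ by splitting off the kernel in $\modG$. Your version is more uniform and fills in the $(2)\Leftrightarrow(3)$ step the paper omits; what it does not deliver is the explicit natural family of Reynolds operators $\{p_M\}$, which the paper extracts from its proof of $(4)\Rightarrow(2)$ and then reuses repeatedly (Observation \ref{obse:propertiesreynolds}, Lemma \ref{theo:caraclrunip}, Theorems \ref{theo:matsu}, \ref{theo:fginvariants} and \ref{theo:lrideals}). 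If your proof were adopted, that construction would still have to be supplied separately for the later sections; note that your retraction composed with the coaction gives $m\mapsto\sum\sigma(m_1)\cdot m_0$, which differs from the paper's operator by the antipode and is not the projection onto ${}^G\!M$.
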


\begin{proof} First we prove that the first two conditions are
  equivalent. It is clear that the second condition implies the first.
  Conversely, let $M,N,\varphi\,\, \text{and}\,\,n \in {}^GN$ be as in
  (2), consider $m \in M$ such that $\varphi(m)=n$ and call
  $\overline{M}=\{\sum_{i} t_i (x_i \cdot m): t_i \in R, x_i \in G\}$
  the $(G,R)$--submodule of $M$ generated by $m$. Clearly the ideal
  $J=\{r \in R : rn=0\} \subseteq R$ is $G$--stable and we can define
  the map $\lambda: \overline{M} \rightarrow R/J$ by the rule
  $\lambda(\overline{m})= \sum_i t_i + J$. Notice that this definition
  makes sense: if we have other representation $\overline{m}=\sum_j
  u_j (y_j\cdot m)$, then applying $\varphi$ we obtain that
  $\bigl(\sum_i t_i\bigr)n=\bigl(\sum_j u_j\bigr)n$ and then $\sum_i
  t_i - \sum_j u_j \in J$. It is also clear that $\lambda$ is a
  morphism in the category $\modGR$. Using our hypothesis we find
  $\overline{m}_0 \in {}^GM$, such that $\lambda(\overline{m}_0)=
  1+J$. Then, writing $\overline{m}_0=\sum_j t_i (x_i\cdot m)$, we
  have that $\sum_i t_i \equiv 1 (\text{mod}\,J)$
  i.e. $\varphi(\overline{m}_0)=(\sum_i t_i)n= n$. The equivalence of
  (2) and (3) is clear. The equivalence of (4) and (5) was already
  mentioned in part (2) of Observation \ref{obse:intinj}, and it is
  obvious that (6) implies (5).  The proof that (4)
  implies (2) goes as follows. For an arbitrary object $M \in \modGR$
  we can consider the map $p_M:M \rightarrow M$ defined as:
\begin{equation*} p_M(m)= \sum \sigma(\mathcal Sm_1)\cdot
m_0,
\end{equation*} where $\mathcal S:\Bbbk [G]\to \Bbbk [G]$ is the antipode map $\mathcal S(f)(k)=f(k^{-1})$,
$k\in G$. By direct computation one can prove that : if $m
\in {}^G\!M$, then $p_M(m)=m$; $p_M(M)={}^G\!M$ and for an
arbitrary morphism $\varphi:M \rightarrow N$ in $\modGR$ the
diagram below commutes.

\[\xymatrix{M \ar[d]_{p_M} \ar[r]^-{\varphi}& N
\ar[d]^{p_N}\\ {}^G\!M
\ar[r]_{\varphi|_{_{{}^G\!M}}}&{}^G\!N.}\]
 
Then it is clear that if $\varphi$ is surjective, so is
$\varphi|_{_{{}^G\!M}}$.

We finish by proving that (1) implies (6).  Assume that $V
\subseteq W$ is an inclusion in the category $\modG$, $M \in
\modGR$ and let $\varphi:V \rightarrow M$ be a morphism of
$G$--modules. We want to show that it can be extended to a
morphism of $G$--modules, $W \rightarrow M$.  Assume that
$W$ is finite dimensional. Consider
$\operatorname{Hom}_{\,\Bbbk }(W,M)$ and
$\operatorname{Hom}_{\,\Bbbk }(V,M)$ as objects in
$\modGR$ in the following manner: if $x \in G$, $r \in R$
and $\psi \in \operatorname{Hom}_{\,\Bbbk }(W,M)$, then
$(x\cdot \psi)(w)=x\cdot \psi(x^{-1}\cdot w)$ and
$(r\psi)(w)=r\psi(w)$ for $w \in W$, and similarly for
$\operatorname{Hom}_{\,\Bbbk }(V,M)$. The map $\Gamma:
\operatorname{Hom}_{\,\Bbbk }(W,M) \rightarrow
\operatorname{Hom}_{\,\Bbbk }(V,M)$ given by the
restriction of the functions from $W$ to $V$ is clearly a
surjective morphism in $\modGR$. Then, by hypothesis, for
some $\widehat{\varphi} \in
{}^G\!\operatorname{Hom}_{\,\Bbbk }(W,M)$ we have:
$\Gamma(\widehat{\varphi}) = \operatorname{\varphi} \in
{}^G\!\operatorname{Hom}_{\,\Bbbk }(V,M)$ and hence the
map $\varphi$ can be extended from $V$ to $W$.

In the case that $W$ is not necessarily finite dimensional,
using standard arguments we may assume that we are in the
situation that we have extended the map $\varphi:V
\rightarrow R$ maximally to $V_\infty$ as shown in the
diagram below.
\[\xymatrix{V \ar[d]^(.4){\varphi} \ar[r]^-{\subseteq}&
{V_\infty}
\ar[dl]^(.32){\varphi_{\infty}}\ar[r]^{\subseteq}& W\\
M&&}\] If $V_\infty \neq W$, take $w\in W \setminus
V_\infty$ and consider $\langle G\cdot w\rangle$, the finite
dimensional $G$--module generated by $w$. As we just proved,
we can extend the restriction of $\varphi_{\infty}$ from
$V_{\infty} \cap \langle G\cdot w\rangle$ to $\langle G\cdot
w\rangle$. Putting together this extension of $\varphi$ with
the compatible extension $\varphi_{\infty}$ we construct an
extension of $\varphi$ to $V_{\infty} + \langle G\cdot
w\rangle$, and this is clearly a contradiction.
\end{proof}
\begin{obse}
It can be easily proved that the three conditions below are also 
equivalent the linear reductivity of the action of $G$ on $R$.
\begin{enumerate}
\item If $\theta:V \rightarrow W$ is a surjective morphism
in the category $\modG$, then $(\operatorname{id} \otimes
\theta)\bigl({}^G\!(R \otimes V)\bigr)= {}^G\!(R \otimes
W)$.
\item If $V \!\in\! \modG$, then $\operatorname{H}^n(G,
R\otimes V)=\{0\}$ for all $n \neq 0$.
\item Every inclusion $N \subseteq M$ in the category
$\modGR$ splits in $\modG$.
\end{enumerate}
\end{obse}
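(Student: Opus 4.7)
The plan is to show that each of the three new conditions is equivalent to linear reductivity by relating each one to the equivalent conditions already established in Theorem \ref{theo:characterizations}. The three statements can be viewed as ``restricted'' versions of Theorem \ref{theo:characterizations} (2), (3), (6) respectively, where a general $(G,R)$--module is replaced by one of the special form $R \otimes V$, or where the splitting is weakened to hold only in $\modG$. The strategy is to close a cycle of implications.

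First I would verify that the conditions of Theorem \ref{theo:characterizations} imply the three statements of the Observation. For (1): if $\theta:V \to W$ is a surjection in $\modG$, then $\id_R \otimes \theta: R\otimes V \to R \otimes W$ is a surjection in $\modGR$, so Theorem \ref{theo:characterizations} (2) applies. For (2): since $R \otimes V \in \modGR$, Theorem \ref{theo:characterizations} (3) immediately yields the desired cohomology vanishing. For (3): any inclusion $N \subseteq M$ in $\modGR$ is an inclusion in $\modG$, and $N$ is injective in $\modG$ by Theorem \ref{theo:characterizations} (6), so the inclusion splits in $\modG$.

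For the converses I would close the cycle via $(2) \Rightarrow (1) \Rightarrow \text{Theorem (2)}$ and $(3) \Rightarrow \text{Theorem (1)}$. For $(2) \Rightarrow (1)$: given $\theta: V \to W$ surjective in $\modG$ with kernel $U$, the short exact sequence $0 \to R \otimes U \to R \otimes V \to R \otimes W \to 0$ is exact in $\modG$, and the long exact cohomology sequence together with $H^1(G, R \otimes U)=\{0\}$ gives surjectivity on $G$--fixed points. For $(1) \Rightarrow \text{Theorem (2)}$: given $\varphi: M \to N$ surjective in $\modGR$ and $n \in {}^G\!N$, the map $\id_R \otimes \varphi: R \otimes M \to R \otimes N$ is a surjection in $\modG$, so (1) lifts $1 \otimes n \in {}^G\!(R \otimes N)$ to some $t = \sum r_i \otimes m_i \in {}^G\!(R \otimes M)$; applying the $G$--equivariant multiplication $\mu: R \otimes M \to M$, $r \otimes m \mapsto rm$, produces $\mu(t) = \sum r_i m_i \in {}^G\!M$ with $\varphi(\mu(t)) = n$. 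For $(3) \Rightarrow \text{Theorem (1)}$: given a triple $(M, J, \lambda)$ as in Definition \ref{defi:crucial2}, the short exact sequence $0 \to \ker \lambda \to M \to R/J \to 0$ in $\modGR$ splits in $\modG$ by (3), furnishing a $G$--equivariant section $s: R/J \to M$; since $1+J \in {}^G\!(R/J)$, the element $s(1+J) \in {}^G\!M$ satisfies $\lambda(s(1+J)) = 1+J$, which is exactly the definition of linear reductivity.

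The only slightly delicate point is the step $(1) \Rightarrow \text{Theorem (2)}$, where one must recognize that the ``larger'' module $R \otimes M$ together with its multiplication back to $M$ serves as a bridge from the restricted class of modules $R \otimes V$ to arbitrary objects of $\modGR$; the $G$--equivariance of $\mu$ is immediate from the compatibility $x \cdot (rm) = (x \cdot r)(x \cdot m)$ defining $\modGR$. Apart from this, the arguments are standard assemblies of homological facts and I do not foresee any substantive obstacle.
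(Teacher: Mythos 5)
The paper offers no proof of this Observation (it is stated as ``easily proved''), so there is nothing to diverge from; your argument is correct and is exactly the natural assembly one would expect, deriving each condition from the corresponding item of Theorem \ref{theo:characterizations} and closing the cycle via the multiplication map $\mu:R\otimes M\to M$ and the $G$--equivariant section of $\lambda$. All the individual steps check out, including the $G$--equivariance of $\mu$ and the fact that $1+J$ is $G$--fixed in $R/J$.
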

\begin{obse} \label{obse:propertiesreynolds}
\begin{enumerate}
\item A natural family of morphisms $\{p_M\mathrel{:} M \in
\modGR\}$ as the one constructed in the above proof is
called a family of \emph{Reynolds operators} for the action
of $G$ on $R$. It can be proved that the existence of a
family of Reynolds operators is equivalent to the linear
reductivity of the action.
\item Assume that $S \in \modGR$ is a $(G,R)$--module
algebra. Take $s_0 \in {}^G\!S$ and consider the morphism
$\ell_{s_{0}}:S \rightarrow S$ defined as
$\ell_{s_{0}}(s)=s_0s$ for $s \in S$. It is clear that
$\ell_{s_{0}}$ is a morphism in the category $\modGR$. Using
the naturality of the family of Reynolds operators we deduce
that $p_S(s_0s)=s_0p_S(s)$, equality that is valid for all
$s_0 \in {}^G\!S$ and $s \in S$. This equality can also be
formulated as follows: for all $s,t \in S$:
$p_S\bigl(p_S(s)t\bigr)=p_S(s)p_S(t)$.  In the literature
the above equality is called the \emph{Reynolds identity}.
\item \label{item:forlater}Assume now that $R$ is a rational
$G$--module algebra, and that $K \triangleleft G$ is a
closed normal subgroup of $G$. Assume that we have a family
of Reynolds operators for the category of $(K,R)$-modules. Fix $g
\in G$ and consider the map $c_g: R \rightarrow R$,
$c_g(r)=gr$. If we call $R_g$ the algebra $R$ equipped with
the action $h\cdot_g r=ghg^{-1}r$, then from the commutativity of
the diagram:
\[
\xymatrix{R\ar[r]^{p_{_R}}\ar[d]_{c_g}&{}^K\!R\ar[d]^{c_g}\\
R_g \ar[r]_{p_{_R}}&{}^K\!R }\] we deduce that
$p_R(gr)=gp_R(r)$.
\end{enumerate}
\end{obse}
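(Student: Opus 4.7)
The plan is to address the three items of Observation \ref{obse:propertiesreynolds} in turn, using Theorem \ref{theo:characterizations} as the main tool throughout.

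For item (1), I would note that the ``linear reductivity $\Rightarrow$ Reynolds family'' direction is already contained in the proof of (4)$\Rightarrow$(2) of Theorem \ref{theo:characterizations}: the explicit formula $p_M(m)=\sum\sigma(\mathcal{S}m_1)\cdot m_0$ built from a total integral $\sigma$ produces a natural family satisfying the three required properties (identity on ${}^G\!M$, image in ${}^G\!M$, and naturality in morphisms of $\modGR$). For the converse, given such a family $\{p_M\}$, a surjection $\varphi:M\to N$ in $\modGR$, and $n\in{}^G\!N$, I would lift $n$ to any $m\in M$ and apply naturality to compute $\varphi(p_M(m))=p_N(\varphi(m))=p_N(n)=n$, with $p_M(m)\in{}^G\!M$. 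This is exactly condition (2) of Theorem \ref{theo:characterizations}.

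For item (2), the decisive step is to verify that left multiplication $\ell_{s_0}:S\to S$, $s\mapsto s_0s$, is a morphism in $\modGR$. $R$-linearity is just commutativity of $S$; $G$-equivariance uses $s_0\in{}^G\!S$ together with the $(G,R)$-module algebra identity $g\cdot(s_0s)=(g\cdot s_0)(g\cdot s)=s_0(g\cdot s)$. Naturality of the Reynolds family applied to $\ell_{s_0}$ then yields $p_S(s_0s)=s_0\,p_S(s)$, and specializing $s_0:=p_S(s')\in{}^G\!S$ gives the Reynolds identity $p_S\bigl(p_S(s')s\bigr)=p_S(s')\,p_S(s)$.

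For item (3), the main task is to realize $c_g:R\to R_g$ as a morphism in $\modKR$ so that naturality of the Reynolds family may be invoked on the square. The subtle point is that $R_g$ with the naive $R$-action is not an object of $\modKR$; the remedy is to twist the $R$-action on $R_g$ by setting $r\cdot_g m:=(gr)m$. A direct calculation (using normality of $K$) shows that $R_g$ with both twisted actions is an object of $\modKR$ and that $c_g$ is $(K,R)$-equivariant: $R$-linearity reads $c_g(rs)=g(rs)=(gr)(gs)=r\cdot_g c_g(s)$, and $K$-equivariance reads $c_g(h\cdot s)=g(hs)=(ghg^{-1})(gs)=h\cdot_g c_g(s)$. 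Since ${}^K\!R_g={}^K\!R$ as subsets of $R$ (again by normality), naturality of the Reynolds family delivers the commutative square and hence $p_R(gr)=g\,p_R(r)$. The principal obstacle is precisely this last step --- recognizing that the twist on $R_g$ must be applied to both the $K$-action and the $R$-action for $c_g$ to land in the correct category; once this categorical setup is in place, naturality does all the work.
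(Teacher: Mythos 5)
Your proposal follows essentially the same route as the paper's own (very brief) justification. Items (1) and (2) are exactly the paper's argument, correctly carried out: the converse in (1) by lifting $n$ and using naturality together with $p_N|_{{}^G\!N}=\operatorname{id}$, and (2) by checking that $\ell_{s_0}$ is a morphism in $\modGR$ and then specializing $s_0=p_S(s')$. In item (3) you in fact improve on the paper by noting that the $R$--module structure on $R_g$ must also be twisted, $r\cdot_g m=(g\cdot r)m$, for $c_g$ to be $R$--linear; the paper only records the twisted $K$--action. There is, however, one step elided both in your write-up and in the paper's: naturality applied to the morphism $c_g\colon R\to R_g$ of $\modKRI[-1]$-- sorry, of the category of $(K,R)$--modules -- yields $p_{R_g}\circ c_g=c_g\circ p_R$, that is $p_{R_g}(g\cdot r)=g\cdot p_R(r)$, whereas the stated conclusion $p_R(g\cdot r)=g\cdot p_R(r)$ additionally requires the identification $p_{R_g}=p_R$ as maps of the underlying set $R$; the equality ${}^K\!R_g={}^K\!R$ (which you correctly derive from normality) only matches the targets, not the maps. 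This identification is not a formal consequence of naturality --- naturality alone only gives $p_{R_g}=c_g\circ p_R\circ c_{g^{-1}}$, which is circular for the desired statement --- and needs an extra input such as uniqueness of the Reynolds operator or the conjugation--equivariance of the total integral $\sigma$ (the latter is exactly what the paper invokes in Observation \ref{obse:classicallevi} in the special case $R=\Bbbk[G]$, $K=\mathcal R_u(G)$, where it is deduced from uniqueness of the integral). You should either supply that argument or flag it as the missing ingredient.
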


\begin{example}\label{example:inicial}\
\begin{enumerate}
\item If $G$ is a linearly reductive group
-- in the classical sense -- then any rational action of $G$
on an algebra $R$ is linearly reductive. Conversely, in the
case of a trivial action of a group $G$ on an algebra $R$,
the linear reductivity of the action implies the
linear reductivity of the group. If we have a surjective
morphism in $\modG$, $\varphi:M \rightarrow N$, then
extending scalars we obtain the surjective morphism
$\operatorname{id} \otimes \varphi:R \otimes M \rightarrow R
\otimes N$ in $\modGR$. As $ (\operatorname{id} \otimes
\varphi)({}^G(R \otimes M))= (\operatorname{id} \otimes
\varphi)(R \otimes {}^GM)=R \otimes \varphi({}^GM)= R
\otimes {}^GN$, we conclude that $\varphi({}^GM)= {}^GN$.
 
\item \label{item:goingup} Using a change of scalars argument,
and Theorem \ref{theo:characterizations} it follows 
that if $\varphi:R \rightarrow S$ a morphism of rational
$G$--module algebras and the action of $G$ on $R$ is
linearly reductive, then the action of $G$ on $S$ is
linearly reductive.  In particular, if $X$ and $Y$ are
affine $G$--varieties and $f:X \rightarrow Y$ is an
equivariant morphism, then if $(G, Y)$ is linearly
reductive, so is $(G,X)$.

\item \label{item:factor}Let  $S= R \otimes L$,
where $R,L$ are $G$--module algebras. Then, from the
linear reductivity of the action of $G$ on $R$ we deduce the
same reductivity for the action on $S$. Similarly if $X,Y$
are affine $G$--varieties, and the action of $G$ on $Y$ is
linearly reductive, the same is true for $X\times Y$.

\item Let $K \triangleleft\, G$ be a (normal) inclusion of affine
  algebraic groups. Assume that $R$ is a rational $G$--module algebra
  where $K$ acts trivially. Then the action of $G$ in $R$ is linearly
  reductive if and only if the action of $G/K$ in $R$ is linearly
  reductive.

\item \label{item:hopf}The action of $G$ in itself by translations is linearly
  reductive. Indeed, the fundamental theorem on Hopf modules guarantees the
  existence of a natural equivalence in the category of $\bigl(G,\Bbbk
  [G]\bigr)$--modules: $\Theta_M:M \cong \Bbbk [G] \otimes {}^G\!M$,
  where $M$ is taken with its action and on the tensor product we use
  the diagonal action, see for example \cite[Theorem
  4.3.29]{kn:nosotros}.  This implies that a surjective morphism in
  the category of $\bigl(G,\Bbbk [G]\bigr)$--modules, restricts to a
  surjective morphism between the corresponding $G$--fixed parts.

\item \label{item:cross}Let $X$ be an affine variety and
$G$--variety. An \emph{equivariant cross section} is a morphism
$\varphi: X \rightarrow G$, with the property that for all
$x \in X$ and $k \in G$, $\Phi(x\cdot k)=\Phi(x)\cdot k$.
If $X$ admits an equivariant cross section, let
$S=\Phi^{-1}(1)$, and  endow $S$ with
the trivial action. Then $\Theta: X \rightarrow G
\times S$, $\Theta(x)=\left(\Phi(x),x\cdot
\Phi(x)^{-1}\right)$, is a $G$--equivariant isomorphism, with inverse
  $(k,s) \mapsto 
s\cdot k: G \times S \rightarrow X$. As the action of $G$ on
$G$ is linearly reductive, we deduce that the action of $G$
on $X$ is linearly reductive. In Theorem
\ref{theo:caraclrunip2} we show that if the group $G$ is
unipotent, all the linearly reductive actions are of the
above form.
\end{enumerate}
\end{example}

The following particular case of the situation of Example
\ref{example:inicial}, \eqref{item:goingup} is illustrative. In particular,
it shows that if the action of a non linearly reductive
affine group in a variety is linearly reductive, it cannot
have fixed points.
\begin{coro}\label{coro:goingup2} Let $G$ be an affine
  algebraic group and $\varphi:R \rightarrow \Bbbk $ a morphism of
  rational $G$--module algebras and assume that $\Bbbk $ is endowed
  with the trivial action. If the action of $G$ on $R$ is linearly
  reductive, then $G$ is linearly reductive.  In particular if the
  group $G$ acts linearly reductive on an affine variety with a fixed
  point, then $G$ is linearly reductive.
\end{coro}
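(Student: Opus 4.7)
The plan is to reduce this corollary directly to two facts already recorded in Example \ref{example:inicial}, so almost no new work is required. First I would invoke item \eqref{item:goingup} of that example: since $\varphi\colon R\to\Bbbk$ is a morphism of rational $G$--module algebras and the action of $G$ on $R$ is linearly reductive, the action of $G$ on the target $\Bbbk$ is also linearly reductive. By hypothesis this action is the trivial one.

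Next I would appeal to the first assertion of Example \ref{example:inicial}(1): if the trivial action of $G$ on an algebra (here $\Bbbk$ itself) is linearly reductive, then $G$ is linearly reductive in the classical sense. Spelling this out, for any surjection $M\twoheadrightarrow N$ in $\modG$ one tensors with $\Bbbk$ (i.e.\ does nothing) and views it as a surjection in ${}_{(G,\Bbbk)}\mathcal M$; the hypothesis that the trivial action on $\Bbbk$ is linearly reductive then gives ${}^G\!M\twoheadrightarrow {}^G\!N$, which is exactly the definition of linear reductivity of $G$ as a group. This finishes the first assertion.

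For the ``in particular'' clause, a fixed point $x_{0}\in X$ of the $G$--action is exactly an equivariant morphism of $G$--varieties $\{\star\}\to X$, where $\{\star\}$ is endowed with the (necessarily trivial) $G$--action. Dualizing, this is a morphism of rational $G$--module algebras $\Bbbk[X]\to \Bbbk[\{\star\}]=\Bbbk$, with $\Bbbk$ carrying the trivial action. Applying the first part of the corollary to this morphism gives that $G$ is linearly reductive.

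There is no real obstacle here; the content of the corollary is simply the observation that the existence of a fixed point produces a quotient $G$--module algebra on which $G$ acts trivially, and linear reductivity of a linearly reductive action descends through quotients while trivial linearly reductive actions force the group itself to be linearly reductive. The only minor point to check is that the dual of the inclusion $\{x_0\}\hookrightarrow X$ is indeed a morphism of rational $G$--module algebras when $x_0$ is fixed, which is immediate from the definition of the $G$--action on $\Bbbk[X]$.
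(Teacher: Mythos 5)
Your proposal is correct and follows exactly the paper's own route: apply Example \ref{example:inicial}\eqref{item:goingup} to the morphism $\varphi\colon R\to\Bbbk$ to conclude that the trivial action of $G$ on $\Bbbk$ is linearly reductive, then invoke the converse statement in Example \ref{example:inicial}(1) to deduce that $G$ itself is linearly reductive. The fixed-point clause via the evaluation morphism $\Bbbk[X]\to\Bbbk$ is likewise the intended reduction, so there is nothing to add.
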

\begin{proof} We use Example \ref{example:inicial}, (2) to
guarantee that the action of $G$ on $\Bbbk $ is linearly
reductive. Hence $G$ is linearly reductive.
\end{proof}

We can get some geometrical information about the closed
orbits of linearly reductive actions.

\begin{coro}\label{coro:goingup3}
\begin{enumerate}
\item Let $G$ be an affine algebraic group, and $H\subseteq
G$ a closed subgroup such that the homogeneous space $G/H$
is affine. If $G$ acts in a linearly reductive way on an
affine variety $X$ and $H$ has a fixed point on $X$, then
$H$ is linearly reductive.

\item Let $G$ be an affine algebraic group acting in a
linearly reductive way on an affine variety $X$ and suppose
that the action is separable. If $Y \subseteq X$ is an
affine orbit, then $Y$ is $G$--equivariantly isomorphic to
$G/H$ where $H$ is linearly reductive. In particular, if the
base field has characteristic zero, any affine orbit of a
linearly reductive action is of the form $G/H$ for $H$
reductive.
\end{enumerate}
\end{coro}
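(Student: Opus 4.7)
The plan is to deduce both parts from three earlier inputs: Corollary \ref{coro:goingup2} (fixed point forces linear reductivity of the acting group), the Cline--Parshall--Scott theorem (mentioned in the introduction and established in Section \ref{section:grsubgr}) identifying affineness of $G/H$ with linear reductivity of the $H$-action on $G$, and the transitivity result (Theorem \ref{theo:matsu}) saying that linear reductivity of $G$ on $X$ together with linear reductivity of $H$ on $G$ yields linear reductivity of $H$ on $X$. With these tools the corollary becomes essentially bookkeeping.

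For part (1), I would argue as follows. Since $G/H$ is affine, the Cline--Parshall--Scott criterion gives that the action of $H$ on $G$ is linearly reductive. Combining this with the hypothesis that $G$ acts linearly reductively on $X$, the transitivity statement (Theorem \ref{theo:matsu}) implies that the action of $H$ on $X$ is linearly reductive. Since by assumption $H$ fixes some point $x_0 \in X$, evaluation at $x_0$ gives a morphism of rational $H$-module algebras $\Bbbk [X] \to \Bbbk$ with $\Bbbk$ carrying the trivial $H$-action. Corollary \ref{coro:goingup2} then yields that $H$ is linearly reductive.

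For part (2), fix a point $y \in Y$ and let $H \subseteq G$ be its isotropy subgroup. Separability of the action means the orbit map $G \to Y$, $g \mapsto y\cdot g$, descends to a $G$-equivariant isomorphism $G/H \cong Y$. Since $Y$ is assumed to be affine, the homogeneous space $G/H$ is affine. Moreover $y \in X$ is fixed by $H$ by construction, so all the hypotheses of part (1) are in force. Hence $H$ is linearly reductive; recalling from the Notations that in characteristic zero linearly reductive coincides with reductive, the last assertion follows.

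The proof is not really obstructed at any single point; the only conceptual care needed is in part (2), where one must be clear that the affine orbit $Y$ need not be closed in $X$, so we do not invoke linear reductivity of the induced action of $G$ on $Y$ directly, but rather exploit the fixed point $y$ living in the ambient affine variety $X$ and push everything through part (1). This isolates precisely the role of the separability hypothesis: it is what gives the scheme-theoretic identification $Y \cong G/H$ required to translate affineness of $Y$ into affineness of $G/H$.
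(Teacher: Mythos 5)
Your proposal is correct and follows essentially the same route as the paper: both parts reduce to the chain Cline--Parshall--Scott (Theorem \ref{theo:cps}) $\Rightarrow$ transitivity (Theorem \ref{theo:matsu}) $\Rightarrow$ Corollary \ref{coro:goingup2} via a fixed point. The only (harmless) cosmetic difference is in part (2), where the paper first restricts the $G$--action to $Y\cong G/H$ and uses the $H$--fixed point $1H$ there, whereas you apply part (1) directly in the ambient variety $X$ with the fixed point $y$.
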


\begin{proof}
\begin{enumerate}
\item Using the a transitivity result that will be proved in
Section \ref{section:transitivity} --see Theorem
\ref{theo:matsu}-- we deduce that the action of $H$ on $X$
is also linearly reductive. As $H$ has a fixed point on $X$
we conclude from Corollary \ref{coro:goingup2} that $H$ is
linearly reductive.

\item The orbit $Y$ is $G$--isomorphic to $G/H$ that is an
affine variety.  As the action of $G$ on $X$ is linearly
reductive so is the action of $G$ on $G/H$. As the point $1H
\in G/H$ is $H$--fixed, we deduce that $H$ is linearly
reductive.
\end{enumerate}
\end{proof}

In accordance with a result we prove later (Theorem
\ref{theo:radical}), the consideration of reductive actions
in the case of an unipotent group is relevant to the
understanding of the reductivity of the actions.

\begin{lema}\label{theo:caraclrunip} Assume that $U$ is an
unipotent affine algebraic group and that $R$ is a rational
$U$--module algebra. 
\begin{enumerate}
\item Then, the action of $U$ on $R$ is
linearly reductive if and only if there exists a
multiplicative total integral $\sigma: \Bbbk [U]
\rightarrow R$.
\item \label{item:multirey}Moreover, in the above situation if $M \in {}_{(U,R)} \mathcal M$,
  $p_R$ and $p_M$ are the Reynolds operators associated to 
$R$ and $M$, we have that $p_M(rm)=p_R(r)p_M(m)$. 
\item \label{item:multirey2} In particular $p_R:R \rightarrow {}^G\!R$ is an algebra homomorphism.
\end{enumerate}
\end{lema}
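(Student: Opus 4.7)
For (1), the ``if'' direction is immediate from Theorem \ref{theo:characterizations}: a multiplicative total integral is in particular a total integral, so the equivalence (1)$\Leftrightarrow$(4) yields linear reductivity. The nontrivial direction is ``only if''; my plan is to start with a total integral $\sigma_0\colon\Bbbk[U]\to R$ (furnished by Theorem \ref{theo:characterizations}) and upgrade it to a multiplicative one.

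I would first handle the model case $U=\mathbb{G}_a$, writing $\Bbbk[U]=\Bbbk[t]$ with $\Delta(t)=t\otimes 1+1\otimes t$. Setting $s:=\sigma_0(t)$, the equivariance of $\sigma_0$ together with $\sigma_0(1)=1$ forces
\[
\chi(s)=(\sigma_0\otimes\id)\Delta(t)=s\otimes 1+1\otimes t.
\]
I would then \emph{redefine} $\sigma\colon\Bbbk[t]\to R$ as the unique algebra homomorphism with $\sigma(t)=s$; it is multiplicative and total by construction, and the binomial expansion $\chi(s^n)=\chi(s)^n=\sum_k\binom{n}{k}s^k\otimes t^{n-k}$ matches $(\sigma\otimes\id)\Delta(t^n)$, hence $\sigma$ is $U$-equivariant. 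For general unipotent $U$ (in characteristic zero) I would proceed by induction on $\dim U$: pick a central $N\cong\mathbb{G}_a$ inside $U$; since $U/N$ is affine, restriction preserves injectivity, so the $N$-action on $R$ is linearly reductive and the base case produces $\sigma_N\colon\Bbbk[N]\to R$; the induced $U/N$-action on $R^N$ is linearly reductive, and the inductive hypothesis produces $\sigma_{U/N}\colon\Bbbk[U/N]\to R^N$; finally these splice together via the variety isomorphism $U\cong N\times U/N$ to yield $\sigma\colon\Bbbk[U]\to R$. The principal obstacle in the inductive step is verifying full $U$-equivariance of the spliced $\sigma$, not just $N$- and $U/N$-equivariance separately; this depends on choosing the splitting compatibly.

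Granting (1), parts (2) and (3) follow from Sweedler-notation calculations with the Reynolds operator formula $p_M(m)=\sum\sigma(\mathcal Sm_1)m_0$. Using the compatibility $\chi_M(rm)=\sum r_0m_0\otimes r_1m_1$ and that the antipode is an anti-homomorphism, one computes
\[
p_M(rm)=\sum\sigma\bigl(\mathcal S(r_1m_1)\bigr)r_0m_0=\sum\sigma(\mathcal Sm_1)\sigma(\mathcal Sr_1)r_0m_0,
\]
where the second equality invokes the multiplicativity of $\sigma$. Reordering using the commutativity of $R$ yields $p_R(r)p_M(m)$, proving (2). Part (3) is the special case $M=R$, which becomes the statement $p_R(rs)=p_R(r)p_R(s)$.
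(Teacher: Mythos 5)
Your parts (2) and (3), and the ``if'' direction of (1), coincide with the paper's argument: the paper derives $p_M(rm)=\sum\sigma\bigl(\mathcal S(r_1m_1)\bigr)r_0m_0=\sum\sigma(\mathcal S r_1)\sigma(\mathcal S m_1)r_0m_0=p_R(r)p_M(m)$ by exactly the Sweedler computation you give, and obtains (3) as the special case $M=R$. The divergence --- and the gap --- is in the ``only if'' direction of (1). The paper does not prove that a total integral for a unipotent group can be upgraded to a multiplicative one; it invokes \cite[Theorem 11.8.1]{kn:nosotros} for that fact. You instead attempt a direct proof by induction on $\dim U$. Your base case $U=\mathbb{G}_a$ is correct (and in fact works in any characteristic: $\chi(s)=s\otimes1+1\otimes t$ forces $\chi(s^n)=\sum_k\binom{n}{k}s^k\otimes t^{n-k}$, which matches $(\sigma\otimes\id)\Delta(t^n)$).

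The inductive step, however, is not a proof. The point you yourself flag --- that the spliced map $\Bbbk[N]\otimes\Bbbk[U/N]\cong\Bbbk[U]\to R$, $f\otimes g\mapsto\sigma_N(f)\sigma_{U/N}(g)$, must be shown to be $U$--equivariant and not merely $N$-- and $U/N$--equivariant --- is precisely where all the work lies. The $U$--module structure on $\Bbbk[U]$ does not decompose as the tensor product of the $N$--action on $\Bbbk[N]$ with the $U/N$--action on $\Bbbk[U/N]$: the variety splitting $U\cong N\times U/N$ is not a group splitting, and $U$ acts on $N$ by conjugation, so $\sigma_N$ would have to be chosen compatibly with that twisted structure, and nothing in your argument produces such a choice. (Your appeals to ``restriction preserves injectivity'' and to the linear reductivity of the $U/N$--action on ${}^N\!R$ are also forward references to Theorem \ref{theo:matsu}, though that is only an ordering issue, not a logical one.) You further restrict to characteristic zero, while the lemma is stated over an arbitrary algebraically closed field, where a unipotent group need not even be connected. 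As written, the ``only if'' half of (1) is an outline with its central step missing; either complete the splicing argument or do as the paper does and cite the reference.
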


\begin{proof} \begin{enumerate}
\item In accordance with Theorem
\ref{theo:characterizations}, the linear reductivity of the
action of $U$ on $R$ is equivalent to the existence of a
total integral $\sigma_0 : \Bbbk [U] \rightarrow R$. In
the case of a unipotent group, the existence of a total
integral implies the existence of a {\em multiplicative}
total integral as it is proved for example in \cite[Theorem
11.8.1]{kn:nosotros}.
\item Write $\chi_M(m)=\sum m_0 \otimes m_1$ and
  $\chi_R(m)=\sum r_0 \otimes r_1$, then
  $p_M(rm)=\sum \sigma(S(r_1m_1))r_0m_0=\sum \sigma(S(r_1))\sigma(S(m_1))r_0m_0=p_R(r)p_M(m)$. 
\item This result is a particular case of  \eqref{item:multirey}.
\end{enumerate}
\end{proof}

The theorem that follows is a characterization of linearly
reductive unipotent actions and its proof is related to the
results of \cite{kn:CPS}, and \cite[Theorem
11.8.2]{kn:nosotros}.
\medskip
\begin{theo}\label{theo:caraclrunip2} Assume that $U$ is an
unipotent affine algebraic group and that $X$ is an affine
$U$--variety. Then, the action of $U$ on $X$ is linearly
reductive if and only if, there exist a closed subvariety
$L$ of $X$ and an isomorphism of algebraic varieties,
$\Theta: X \rightarrow L \times U$ that is $U$--equivariant
when we endow $L \times U$ with the action given by the
right translation on $U$ and the trivial action on $L$.
\end{theo}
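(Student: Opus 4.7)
The plan is to prove the two implications separately.

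For the sufficiency direction, suppose $\Theta:X\to L\times U$ is a $U$-equivariant isomorphism with $L$ acted trivially and $U$ acted on by right translations. By Example~\ref{example:inicial}\eqref{item:hopf} the action of $U$ on itself by translations is linearly reductive, and by Example~\ref{example:inicial}\eqref{item:factor} the product $L\times U$ with trivial action on the first factor is linearly reductive as well. Transporting by $\Theta$ (or equivalently applying Example~\ref{example:inicial}\eqref{item:goingup}) yields the linear reductivity of the action on $X$.

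For the necessity direction, assume the action of $U$ on $X$ is linearly reductive. By Lemma~\ref{theo:caraclrunip}(1) there exists a \emph{multiplicative} total integral $\sigma:\Bbbk[U]\to\Bbbk[X]$; that is, $\sigma(1)=1$ and $\sigma(fg)=\sigma(f)\sigma(g)$, so $\sigma$ is a $\Bbbk$-algebra homomorphism between the coordinate rings of affine varieties. Dualizing, $\sigma$ determines a morphism of affine varieties $\Phi:X\to U$ with $\Phi^*=\sigma$.

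The crucial step is to check that $\Phi$ is $U$-equivariant, i.e.\ that $\Phi(x\cdot u)=\Phi(x)\cdot u$ for all $x\in X$, $u\in U$. This is precisely the dual of the commutativity square in Observation~\ref{obse:intinj}(1): for any $f\in\Bbbk[U]$ and $x\in X$,
\[
f\bigl(\Phi(x)\cdot u\bigr)=(u\cdot f)\bigl(\Phi(x)\bigr)=\sigma(u\cdot f)(x)=\bigl(u\cdot\sigma(f)\bigr)(x)=\sigma(f)(x\cdot u)=f\bigl(\Phi(x\cdot u)\bigr),
\]
where the middle equality uses the $U$-equivariance of the integral $\sigma$. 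Thus $\Phi$ is an equivariant cross section in the sense of Example~\ref{example:inicial}\eqref{item:cross}. Setting $L:=\Phi^{-1}(1)$ (a closed subvariety of $X$, trivially $U$-stable since $\Phi$ is equivariant), the construction of Example~\ref{example:inicial}\eqref{item:cross} yields the desired $U$-equivariant isomorphism $X\cong L\times U$ via $x\mapsto\bigl(x\cdot\Phi(x)^{-1},\,\Phi(x)\bigr)$, with inverse $(s,k)\mapsto s\cdot k$.

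The main obstacle is obtaining the algebra map $\sigma$: having a mere total integral only gives a $\Bbbk$-linear section, not a morphism of varieties, so it is essential to invoke the fact (from Lemma~\ref{theo:caraclrunip}) that for unipotent $U$ the existence of a total integral upgrades to the existence of a multiplicative one. Once that is in hand, the translation between the coalgebraic equivariance of $\sigma$ and the geometric equivariance of $\Phi$, together with the cross-section construction already spelled out in Example~\ref{example:inicial}\eqref{item:cross}, finishes the proof.
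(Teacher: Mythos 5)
Your proof is correct and follows essentially the same route as the paper: the converse via the tensor decomposition of Example \ref{example:inicial}, and the forward direction by upgrading the total integral to a multiplicative one (Lemma \ref{theo:caraclrunip}), dualizing to an equivariant morphism $\Phi:X\to U$, and invoking the cross-section construction of Example \ref{example:inicial}, \eqref{item:cross}. You merely spell out the equivariance check for $\Phi$ that the paper leaves implicit.
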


\begin{proof} The algebra morphism $\sigma:\Bbbk [U]
\rightarrow \Bbbk [X]$ induces a morphism of varieties
$\Phi: X \rightarrow U$ with the property that $\Phi(x \cdot
u)=\Phi(x)u$ for all $x \in X$, $u \in U$. If we call
$L=\Phi^{-1}(1)$, the existence of a morphism $\Theta$ was
shown in Example \ref{example:inicial},
\eqref{item:cross}. Conversely, in the situation of the
existence of $\Theta$, we obtain an isomorphism $\Bbbk [X] \cong \Bbbk [L] \otimes \Bbbk [U]$ and this
implies that the action of $U$ on $X$ is linearly reductive
(see Example \ref{example:inicial},\eqref{item:factor}).
\end{proof}
\medskip
\begin{obse}\label{obse:comments}\
\begin{enumerate}
\item In the algebraic version, if $U$ is a unipotent group, then a pair
  $(U,R)$ is   linearly reductive if and only if  there exists a decomposition
$R=\Bbbk [U] \otimes L$ for some algebra $L$ where $U$ acts trivialy.
\item Notice that in the above situation all the orbits of
the action are isomorphic with $U$ and that the quotient
variety $X/U$ exists and coincides with $L$.
\item Moreover, the above Theorem \ref{theo:caraclrunip2} means that
  except for trivial factors the action of the group $U$ on itself by
  right translations -- that we have seen it is linearly reductive in
  Example \ref{example:inicial}, \eqref{item:hopf}--
  is the only linearly reductive action of a unipotent group.
\end{enumerate}
\end{obse}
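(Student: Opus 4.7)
The plan is to leverage Lemma~\ref{theo:caraclrunip} to extract a \emph{multiplicative} total integral $\sigma:\Bbbk[U]\to\Bbbk[X]$, then interpret this algebra homomorphism geometrically as an equivariant cross section $\Phi:X\to U$, so that Example~\ref{example:inicial}\,\eqref{item:cross} immediately delivers the desired splitting $\Theta$. For the converse I read off the conclusion from Example~\ref{example:inicial}\,\eqref{item:hopf} combined with Example~\ref{example:inicial}\,\eqref{item:factor}.

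For the forward implication, suppose the action of $U$ on $X$ is linearly reductive. By Lemma~\ref{theo:caraclrunip}\,(1) there exists a multiplicative total integral $\sigma:\Bbbk[U]\to\Bbbk[X]$. Being an algebra homomorphism, $\sigma$ dualizes to a morphism of affine varieties $\Phi:X\to U$, and the $U$-equivariance condition encoded in the diagram of Observation~\ref{obse:intinj}(1) translates into the geometric identity $\Phi(x\cdot u)=\Phi(x)\cdot u$ for all $x\in X$, $u\in U$. Thus $\Phi$ is an equivariant cross section in the sense of Example~\ref{example:inicial}\,\eqref{item:cross}. Setting $L=\Phi^{-1}(1)\subseteq X$, that example produces the $U$-equivariant isomorphism $X\to L\times U$ (up to reordering of factors) which one defines by $x\mapsto\bigl(x\cdot\Phi(x)^{-1},\,\Phi(x)\bigr)$, with the trivial action on $L$ and right translation on $U$; this is the $\Theta$ we want.

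For the converse, assume such a $\Theta$ exists. Pulling back functions, $\Bbbk[X]\cong\Bbbk[L]\otimes\Bbbk[U]$ as $U$-module algebras, with $U$ acting trivially on the first factor and by right translation on the second. By Example~\ref{example:inicial}\,\eqref{item:hopf} the action of $U$ on $\Bbbk[U]$ is linearly reductive, and Example~\ref{example:inicial}\,\eqref{item:factor} (tensoring a linearly reductive action with a trivial factor) then shows that the action of $U$ on $\Bbbk[X]$ is linearly reductive.

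The only genuinely delicate point in the argument is the \emph{multiplicativity} of the integral. A merely $\Bbbk$-linear equivariant section of $\Bbbk[U]\hookrightarrow\Bbbk[X]$ would yield only a linear splitting and not a morphism of varieties $\Phi:X\to U$, so one could not invoke Example~\ref{example:inicial}\,\eqref{item:cross}. It is precisely the unipotency of $U$ that upgrades an arbitrary total integral to a multiplicative one via Lemma~\ref{theo:caraclrunip}\,(1); once this is in hand, the rest of the proof is a dictionary between algebra and geometry together with the two earlier examples.
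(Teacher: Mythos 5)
Your forward and converse arguments reproduce, essentially verbatim, the paper's proof of Theorem \ref{theo:caraclrunip2}: extract a multiplicative total integral via Lemma \ref{theo:caraclrunip}\,(1), dualize it to an equivariant cross section $\Phi:X\to U$, and invoke Example \ref{example:inicial}, \eqref{item:cross} in one direction and \eqref{item:hopf} together with \eqref{item:factor} in the other. For the geometric statement this is correct. But the statement you were asked to justify is the Observation, and two of its claims are not covered. Part (1) is the \emph{algebraic} version: $(U,R)$ is linearly reductive iff $R\cong\Bbbk[U]\otimes L$ for an \emph{arbitrary} rational commutative $U$--module algebra $R$. For such an $R$ the step ``$\sigma$ dualizes to a morphism of affine varieties $\Phi:X\to U$'' has no meaning, since $R$ need not be finitely generated or reduced, i.e.\ need not be of the form $\Bbbk[X]$. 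The fix is to run the cleaving (Hopf--module) argument directly at the level of algebras: given a multiplicative total integral $\sigma:\Bbbk[U]\to R$, the map $\theta:{}^U\!R\otimes\Bbbk[U]\to R$, $\theta(\ell\otimes f)=\ell\,\sigma(f)$, is a morphism of $U$--module algebras with inverse $r\mapsto\sum p_R(r_0)\otimes r_1$, where $p_R(r)=\sum\sigma(\mathcal S r_1)r_0$ is the Reynolds operator (an algebra map by Lemma \ref{theo:caraclrunip}, \eqref{item:multirey2}); the verification that these are mutually inverse uses exactly the colinearity identity $\sum\sigma(f_1)\otimes f_2=\sum\sigma(f)_0\otimes\sigma(f)_1$ of Observation \ref{obse:intinj} and the antipode axiom. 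This produces $L={}^U\!R$, and the converse direction is then Example \ref{example:inicial}, \eqref{item:factor}, as you say.

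Second, part (2) of the Observation --- that every orbit is isomorphic to $U$ and that the quotient $X/U$ exists and equals $L$ --- is not addressed at all. It does follow immediately from the decomposition, but it should be said: the identity $\Phi(x\cdot u)=\Phi(x)u$ forces all stabilizers to be trivial, so under $\Theta$ each orbit is $\{\ell\}\times U\cong U$, closed, and acted on simply transitively by right translation; the projection $L\times U\to L$ has the orbits as fibres and satisfies ${}^U\bigl(\Bbbk[L]\otimes\Bbbk[U]\bigr)=\Bbbk[L]$, hence is a geometric quotient. With these two additions your proposal covers the full statement; your closing remark correctly identifies the multiplicativity of the integral, guaranteed by unipotency, as the crux of the argument.
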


\section{The case of a group and a closed subgroup}
\label{section:grsubgr}

We consider the particular case of a group and a closed
subgroup acting by translations.

The {\em restriction} functor $\operatorname{Res} _G^K:
\modG \rightarrow \modK$ --$\operatorname{Res}
_G^K(M):=M|_K$-- has a right adjoint: the {\em induction}
functor $\operatorname{Ind}_K^G: \modK \rightarrow \modG$ .

Explicitly, if $M \in \modK$ we endow $\Bbbk [G] \otimes M \in \modK$
with the diagonal $K$--module structure, where the action on the--$G$
polynomials is given by $(x \cdot f)(g)=f(gx)$ for all $x \in K$, $f
\in \Bbbk [G]$. Then $\operatorname{Ind}_K^G(M)={}^K\!\bigl(\Bbbk [G]
\otimes M\bigr)$, with the $G$--action defined as: $g \star (\sum f_i
\otimes m_i) = \sum_i f_i\cdot g^{-1} \otimes m_i$ for $g \in G$, $f_i
\in \Bbbk [G]$ and $m_i \in M$ where $(f \cdot g^{-1})(g') =
f(g^{-1}g')$ for $g' \in G$ (see \cite{kn:CPS} or \cite[Section
6.6]{kn:nosotros}).

Moreover, if $\alpha:M_1 \rightarrow M_1 \in \modK$, then
$\operatorname{Ind}_K^G(\alpha)$ is the restriction of
$\operatorname{id} \otimes \alpha$.  

\medskip
We list some of the basic properties of the induction
functor that will be later used.

\begin{obse} \label{obse:firstproperties} In the above
context we have:
\begin{enumerate}
\item ${}^G\bigl(\operatorname{Ind}_K^G(M)\bigr)={}^K\!\bigl(\Bbbk
[G]^G \otimes M\bigr)={}^KM$.
\item The counit of the above adjunction is $\varepsilon_M:
{}^K\!\bigl(\Bbbk [G] \otimes M\bigr)\rightarrow M$;
$\varepsilon_M\bigl(\sum f_i \otimes m_i\bigr):= \sum
f_i(1)m_i$. Observe that if $\sum f_i \otimes m_i \in
{}^K\!\bigl(\Bbbk [G] \otimes M\bigr)$ then: $\sum_i
f_{i}(1) y\cdot m_{i}= \sum_i f_{i}(y^{-1})m_{i},\,\text{for
all} \, y \in K$.

\item The \emph{tensor identity} guarantees that for $M \in
\modK$ and $N \in \modG$, there is a natural isomorphism
between $\operatorname{Ind}_K^G\bigl(M \otimes N|_K\bigr) \cong
\operatorname{Ind}_K^G(M) \otimes N$. In particular if $M= \Bbbk $ and $N$ is an arbitrary rational $G$--module we have that:
$\operatorname{Ind}_K^G\bigl(N|_K\bigr) \cong {}^K\Bbbk [G] \otimes
N$. Then ${}^KN \cong {}^G\bigl({}^K\Bbbk [G] \otimes N\bigr)$.

\item If $R$ is a $K$-module algebra, then
  $\operatorname{Ind}_K^G(R)$ is also
  $G$-module algebra. 
\end{enumerate}
\end{obse}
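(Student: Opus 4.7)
The plan is to verify each of the four sub-assertions separately; each reduces to a concrete manipulation of the model $\operatorname{Ind}_K^G(M)={}^K\!\bigl(\Bbbk[G]\otimes M\bigr)$ laid out just before the observation.

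For (1), I would note that the $G$-action on $\operatorname{Ind}_K^G(M)$ operates only on the $\Bbbk[G]$-factor via left translation $g\star(f\otimes m)=(f\cdot g^{-1})\otimes m$, whereas the diagonal $K$-action uses right translation on $\Bbbk[G]$ and the given $K$-action on $M$. Since left and right translations on $\Bbbk[G]$ commute, and $G$ acts trivially on $M$, the two actions commute on the whole of $\Bbbk[G]\otimes M$. Therefore the order of fixed points can be exchanged: ${}^G\bigl(\operatorname{Ind}_K^G(M)\bigr)={}^K\bigl({}^G(\Bbbk[G]\otimes M)\bigr)={}^K\bigl(\Bbbk[G]^G\otimes M\bigr)$, and the standard fact $\Bbbk[G]^G=\Bbbk$ (the left-translation-invariant polynomials on $G$ are the constants) gives the stated identification with ${}^K\!M$.

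For (2), I would check in turn that the indicated formula $\varepsilon_M(\sum f_i\otimes m_i)=\sum f_i(1)m_i$ is well defined, $K$-equivariant, and the counit of the restriction--induction adjunction. The key point is $K$-equivariance. For $\xi=\sum f_i\otimes m_i\in{}^K\bigl(\Bbbk[G]\otimes M\bigr)$ and $y\in K$ one unravels $\varepsilon_M(y\star\xi)=\sum f_i(y^{-1})m_i$ and $y\cdot\varepsilon_M(\xi)=\sum f_i(1)(y\cdot m_i)$, so equivariance amounts exactly to the auxiliary identity stated in the observation. To prove that identity, I would apply the evaluation-at-$1$ functional to the first factor of the $K$-invariance relation $\sum(x\cdot f_i)\otimes(x\cdot m_i)=\sum f_i\otimes m_i$; using $(x\cdot f)(1)=f(x)$ this yields $\sum f_i(x)(x\cdot m_i)=\sum f_i(1)m_i$, and setting $x=y^{-1}$ and then acting by $y$ on both sides produces the required formula. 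That $\varepsilon_M$ implements the counit is then a formal Hom-adjoint calculation.

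For (3), the tensor identity, I would exhibit an explicit natural isomorphism $\Psi:\operatorname{Ind}_K^G(M)\otimes N\to\operatorname{Ind}_K^G\bigl(M\otimes N|_K\bigr)$ built from the $\Bbbk[G]$-coaction $\chi_N(n)=\sum n_0\otimes n_1$ on $N$, by twisting the $\Bbbk[G]$-factor of the induction by (the antipode of) this coaction. The verification has three parts: the image lies in the $K$-fixed part (a direct check using that the coaction is $K$-equivariant under restriction), $\Psi$ is $G$-equivariant (relying on the Hopf-algebraic compatibility between left translation and the coaction), and $\Psi$ admits a two-sided inverse given by the analogous twist using the inverse of the antipode. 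The particularization $M=\Bbbk$, combined with (1), yields the stated isomorphism ${}^K\!N\cong{}^G\bigl({}^K\Bbbk[G]\otimes N\bigr)$. I expect this step to be the main obstacle, since one must simultaneously juggle the pointwise product in $\Bbbk[G]$, the left-translation $G$-action, the right-translation $K$-action, and the $\Bbbk[G]$-coaction on $N$, without confusing any of them.

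For (4), if $R$ is a $K$-module algebra, then $\Bbbk[G]\otimes R$ carries the componentwise algebra structure, and the diagonal $K$-action respects this product since both $\Bbbk[G]$ and $R$ are $K$-module algebras; hence the $K$-fixed subspace ${}^K\bigl(\Bbbk[G]\otimes R\bigr)$ is a subalgebra. The $G$-action acts on the $\Bbbk[G]$-factor only, by left translation, which is an algebra automorphism of $\Bbbk[G]$; thus $G$ acts by algebra automorphisms on $\Bbbk[G]\otimes R$ preserving the $K$-fixed subalgebra, making $\operatorname{Ind}_K^G(R)$ a rational $G$-module algebra.
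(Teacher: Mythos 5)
Your proposal is correct. Note that the paper itself gives no proof of this observation --- it is stated as a collection of standard facts with a pointer to \cite{kn:CPS} and \cite[Section 6.6]{kn:nosotros} --- so there is no in-paper argument to diverge from; your verifications (commuting the left-translation $G$-action with the diagonal $K$-action and using $\Bbbk[G]^G=\Bbbk$ for (1), evaluating the $K$-invariance relation at $1$ for (2), the coaction-twist isomorphism for the tensor identity in (3), and the algebra-automorphism argument in (4)) are exactly the standard ones those references supply, and each step checks out against the paper's conventions $(x\cdot f)(g)=f(gx)$ and $(f\cdot g^{-1})(g')=f(g^{-1}g')$.
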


\begin{obse}\label{obse:obsefirstprop}
If $K \subseteq G$ is a closed inclusion of affine algebraic groups,
then $K$ is \emph{observable} in $G$ if $\varepsilon_M$ is surjective
for all $M \in \modK$, moreover the following are equivalent: 

\begin{enumerate}
\item $K$ is observable in $G$; 
\item $G/K$ is a quasi--affine variety; 
\item If $\chi: K \rightarrow \Bbbk ^*$ is a multiplicative character,
  there is a polynomial $f \in \Bbbk [G]$ such that for all $x\in K$,
  $x\cdot f=\chi(x)f$ (see \cite{kn:hmbb,kn:gross} or
  \cite[Observation 10.2.4]{kn:nosotros});
\item If $0 \neq M \in \modK$, then
$\operatorname{Ind}_K^G(M)\neq 0$. 
\item For all $0 \neq I \in
\Bbbk[G]$ stable ideal, then ${}^K\!I \neq 0$.
\end{enumerate}
The proof of most of the above assertions can be found in the
monograph \cite{kn:gross} and also in \cite[Chapter
10]{kn:nosotros}. We only mention the proof that (4) implies
(3). Assuming (4) take an arbitrary rational character $\chi$ of $K$
and call $\Bbbk_{\chi ^{-1}}$ the one dimensional representation
associated to $\chi ^{-1}$. A non zero element of
$\operatorname{Ind}_K^G(\Bbbk_{\chi ^{-1}})$ is a non zero polynomial
on $G$, that is a $K$--semi invariant with character to $\chi$.

Notice also that from (3) it follows immediately that if $K \subseteq
G$ is unipotent, then it is observable.
\end{obse}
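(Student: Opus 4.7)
My plan is to establish the five conditions as equivalent via the cyclic chain $(1) \Rightarrow (4) \Rightarrow (3) \Rightarrow (2) \Rightarrow (5) \Rightarrow (1)$. The link $(4) \Rightarrow (3)$ has already been supplied by the author in the text. The step $(1) \Rightarrow (4)$ is essentially immediate: if $\varepsilon_M$ is surjective and $M \neq 0$, then its source $\operatorname{Ind}_K^G(M)$ cannot vanish. The concluding remark that every unipotent $K$ is observable drops out of (3) once the equivalences are known, since the only character of a unipotent group is trivial and the constant polynomial $1 \in \Bbbk[G]$ witnesses it.

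The main technical step is $(3) \Rightarrow (2)$, for which I would invoke the classical Hochschild--Mostow construction. By Chevalley's theorem, one can write $K$ as the stabilizer in $G$ of a line $\Bbbk v \subseteq V$ for some finite-dimensional rational $G$-module $V$, with $K$ acting on $\Bbbk v$ via a character $\chi$. Condition (3) then supplies $f \in \Bbbk[G]$ with $x \cdot f = \chi(x) f$ for all $x \in K$. Inside the finite-dimensional $G$-module $W = V \otimes \langle G \cdot f \rangle^*$ (or a variant built from $V$ and the $G$-submodule generated by $f$), the vector constructed from $v$ and the linear form dual to $f$ has $G$-stabilizer exactly $K$, thanks to the precise cancellation of the $\chi$ weights. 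The resulting $G$-orbit is a locally closed $G$-stable subvariety of $W$ isomorphic to $G/K$, exhibiting $G/K$ as quasi-affine.

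For $(2) \Rightarrow (5)$, quasi-affineness of $G/K$ means that ${}^K\Bbbk[G] = \Bbbk[G/K]$ is the coordinate ring of a variety whose regular functions separate points. Given a nonzero $K$-stable ideal $I \subseteq \Bbbk[G]$, its vanishing set $V(I) \subseteq G$ is a proper $K$-stable closed subvariety (the $K$-stability of $I$ translates into invariance under right translation by $K$); it descends to a proper closed subset of $G/K$. By separation there exists an element of ${}^K\Bbbk[G]$ vanishing on $V(I)$ but not identically zero, yielding a nonzero element of ${}^K I$.

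Finally for $(5) \Rightarrow (1)$: since $\operatorname{Ind}_K^G = {}^K(\Bbbk[G] \otimes -)$ is left-exact, the surjectivity of every $\varepsilon_M$ reduces to showing $(5) \Rightarrow (4)$, i.e.\ that $\operatorname{Ind}_K^G(M) \neq 0$ whenever $M \neq 0$. I would reduce to the case of $M$ cyclic and finite-dimensional, embed $M$ into $\Bbbk[K]$ via matrix coefficients, and use the $K$-equivariant surjection $\Bbbk[G] \twoheadrightarrow \Bbbk[K]$ to realize $M$ as a subquotient of $\Bbbk[G]$; the invariants in an appropriate $K$-stable ideal of $\Bbbk[G]$ produced by (5) then yield nonzero elements of $\operatorname{Ind}_K^G(M)$ via Frobenius reciprocity. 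The principal obstacle I expect is the Hochschild--Mostow step $(3) \Rightarrow (2)$, namely constructing the correct finite-dimensional $G$-representation containing a vector whose stabilizer is exactly $K$ and verifying local closedness of its orbit; the remaining implications are largely formal once the geometric picture is in place.
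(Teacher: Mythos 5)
The paper itself proves almost none of this: it cites \cite{kn:gross} and \cite[Chapter 10]{kn:nosotros} for the equivalences and only writes out $(4)\Rightarrow(3)$ and the unipotent remark, both of which you reproduce correctly. So most of your proposal is a reconstruction of the classical proofs, and it must stand on its own. Your route for the key step $(3)\Rightarrow(2)$ is the right one (Chevalley plus Bialynicki-Birula--Hochschild--Mostow), but the construction as written is off: the linear form in $\langle G\cdot f\rangle^{*}$ ``dual to $f$'' is a $K$--eigenvector of the quotient $(\Bbbk f)^{*}$, not of $\langle G\cdot f\rangle^{*}$, unless $\Bbbk f$ has a $K$--stable complement. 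The standard repair is cheaper than your ``variant'': since (3) applies to \emph{every} character, apply it to $\chi^{-1}$ to get $f$ with $x\cdot f=\chi^{-1}(x)f$, and take $w=v\otimes f\in V\otimes\langle G\cdot f\rangle$; then $G_w=G_{[v]}=K$ and the orbit $G\cdot w\cong G/K$ is locally closed, hence quasi--affine. In $(2)\Rightarrow(5)$ you also skip a Nullstellensatz step: an invariant $h$ vanishing on $V(I)$ only gives $h^{n}\in I$ for some $n$; since $h^{n}$ is still a nonzero invariant this is harmless, but it must be said, as $h$ itself need not lie in $I$.

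The genuine gap is the link $(5)\Rightarrow(1)$. First, the surjectivity of $\varepsilon_M$ does not ``reduce to'' $\operatorname{Ind}_K^G(M)\neq 0$ by left exactness; the implication $(4)\Rightarrow(1)$ is itself one of the nontrivial equivalences and you give no argument for it. Second, your sketch of $(5)\Rightarrow(4)$ breaks exactly where the difficulty lies: realizing $M$ as a \emph{subquotient} of $\Bbbk[G]|_K$ gives no control on $\operatorname{Ind}_K^G(M)={}^K\bigl(\Bbbk[G]\otimes M\bigr)$, because this functor is only left exact --- it preserves injections but not surjections. For finite--dimensional $M$ one has $\operatorname{Ind}_K^G(M)\cong\operatorname{Hom}_K\bigl(M^{*},\Bbbk[G]\bigr)$, and the whole content of observability is that a nonzero $K$--map $M^{*}\to\Bbbk[K]$ (which always exists) need not lift along the restriction $\Bbbk[G]\twoheadrightarrow\Bbbk[K]$; condition (5) applied to the ideal of $K$ in $G$ does not by itself produce such a lift. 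To close your cycle you would need an honest proof that (5) implies (2) or (3) --- this is a real theorem, and at that point one is reproving the content of \cite{kn:gross} and \cite[Chapter 10]{kn:nosotros}, which is precisely what the authors delegate to the literature.
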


\begin{theo}
  \label{theo:cps} Let $G$ be an affine algebraic group and
$K \subseteq G$ a closed subgroup. Then, the following
conditions are equivalent.
\begin{enumerate}
\item The right action by translation of $K$ on $G$ is linearly
  reductive.
\item The functor $\operatorname{Ind}_K^G: \modK \rightarrow \modG$ is
  exact or equivalently the homogeneous space $G/K$ is an affine
  variety.

\item The subgroup $K$ is observable in $G$, and if $ I \subseteq
  {}^K\!\Bbbk [G]$ is an ideal of ${}^K\Bbbk [G]$ such that $I \Bbbk
  [G] = \Bbbk [G]$, then $I={}^K\Bbbk [G]$.
\end{enumerate}

 \end{theo}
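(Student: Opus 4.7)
The plan is to establish (1) $\Leftrightarrow$ (2) by combining Theorem \ref{theo:characterizations} with faithfully flat descent, and then (2) $\Leftrightarrow$ (3) by a direct geometric argument involving the canonical morphism $G/K \to \operatorname{Spec}({}^K\Bbbk[G])$.

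For (1) $\Rightarrow$ (2), I would start with a short exact sequence $0 \to M_1 \to M_2 \to M_3 \to 0$ in $\modK$ and tensor it with $\Bbbk[G]$ (exact since $\Bbbk[G]$ is $\Bbbk$-free) to obtain an exact sequence in $\modKG$ with $K$ acting diagonally and $\Bbbk[G]$ acting by multiplication on the first factor. Theorem \ref{theo:characterizations}(2) applied to the $\modKG$-surjection $\Bbbk[G] \otimes M_2 \twoheadrightarrow \Bbbk[G] \otimes M_3$ then gives surjectivity of $\operatorname{Ind}_K^G(M_2) \to \operatorname{Ind}_K^G(M_3)$, so induction is exact. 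Conversely, for (2) $\Rightarrow$ (1), when $G/K$ is affine the quotient $\pi: G \to G/K$ is faithfully flat, which delivers an equivalence of categories $\modKG \simeq \Bbbk[G/K]\text{-Mod}$ (descent of $K$-equivariant quasi-coherent sheaves), with mutual quasi-inverses $N \mapsto {}^K N$ and $V \mapsto \Bbbk[G] \otimes_{\Bbbk[G/K]} V$ (the latter carrying the $K$-action from $\Bbbk[G]$). Under this equivalence ${}^K$ corresponds to the identity functor and is therefore exact on $\modKG$, so Theorem \ref{theo:characterizations}(2) yields (1).

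For (2) $\Rightarrow$ (3): affineness of $G/K$ implies quasi-affineness, so $K$ is observable by Observation \ref{obse:obsefirstprop}. Faithful flatness of $\pi^\sharp: {}^K\Bbbk[G] = \Bbbk[G/K] \hookrightarrow \Bbbk[G]$ gives $I = {}^K\Bbbk[G] \cap I\Bbbk[G]$ for every ideal $I \subseteq {}^K\Bbbk[G]$; hence $I\Bbbk[G] = \Bbbk[G]$ forces $I = {}^K\Bbbk[G]$. For (3) $\Rightarrow$ (2): observability guarantees that $G/K$ is quasi-affine with ring of regular functions $\Bbbk[G/K] = {}^K\Bbbk[G]$, and the canonical morphism $j: G/K \hookrightarrow \operatorname{Spec}({}^K\Bbbk[G])$ is an open immersion. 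If $j$ were not surjective, its non-empty complement would be $V(I)$ for a proper ideal $I \subsetneq {}^K\Bbbk[G]$; pulling back to $G$ I would obtain $V(I\Bbbk[G]) = \emptyset$ in $G$ (since the image of $G$ inside $\operatorname{Spec}({}^K\Bbbk[G])$ is exactly $G/K$, which is disjoint from $V(I)$), hence $I\Bbbk[G] = \Bbbk[G]$ by the Nullstellensatz on the affine variety $G$. This contradicts (3), so $j$ is surjective and $G/K = \operatorname{Spec}({}^K\Bbbk[G])$ is affine.

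The main obstacle is the descent step in (2) $\Rightarrow$ (1): setting up the equivalence $\modKG \simeq \Bbbk[G/K]\text{-Mod}$ rigorously for rational modules requires some care with the $K$-action. A more elementary fallback would be to verify directly that every $N \in \modKG$ is $K$-acyclic when $G/K$ is affine, using the Hopf-module isomorphism $\Bbbk[G] \otimes V \cong \Bbbk[G] \otimes V^{\mathrm{triv}}$ as $K$-modules to reduce to the acyclicity of $\Bbbk[G]$ itself, which is the content of the Cline--Parshall--Scott theorem that $\Bbbk[G]$ is $K$-injective exactly when $G/K$ is affine. A small secondary point in (3) $\Rightarrow$ (2) is that one should interpret $\operatorname{Spec}$ scheme-theoretically in case ${}^K\Bbbk[G]$ is not known a priori to be finitely generated; the geometric argument goes through in either case.
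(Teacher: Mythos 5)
Your proposal is correct, but it reaches (2)$\Leftrightarrow$(3) and (2)$\Rightarrow$(1) by genuinely different routes than the paper. For (2)$\Rightarrow$(1) the paper simply invokes Theorem \ref{theo:characterizations} (affineness of $G/K$ gives, via Cline--Parshall--Scott, a total integral $\Bbbk[K]\to\Bbbk[G]$, i.e.\ $K$-injectivity of $\Bbbk[G]$, which is condition (5) there), whereas your primary route is the descent equivalence $\modKG\simeq{}^K\Bbbk[G]\text{-Mod}$ --- that is the Mackey imprimitivity theorem of \cite{kn:CPSMackey}, which the paper does use, but only later in Theorem \ref{theo:homo}; your ``fallback'' is in fact the paper's actual argument and is the lighter one, since you only need $\Bbbk[G]$ itself (not every object of $\modKG$) to be $K$-injective to trigger (5)$\Rightarrow$(1). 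For the second half of (3), the paper derives it directly from linear reductivity: writing $1=\sum f_ig_i$ with $f_i\in I$, it applies condition (2) of Theorem \ref{theo:characterizations} to the surjection $\bigoplus_i\Bbbk[G]\to\Bbbk[G]$, $(g_i)\mapsto\sum f_ig_i$, to produce invariant $\overline g_i$ with $\sum f_i\overline g_i=1$; you instead use faithful flatness of ${}^K\Bbbk[G]\hookrightarrow\Bbbk[G]$ to get $I=I\Bbbk[G]\cap{}^K\Bbbk[G]$, which is valid but imports the nontrivial flatness of the quotient map rather than exploiting the reductivity hypothesis already in hand. For (3)$\Rightarrow$(2) the paper takes the ideal of ${}^K\Bbbk[G]$ generated by those $f$ with $(G/K)_f$ affine, shows it is $G$-stable with no common zeros so that it extends to the unit ideal, and concludes by the criterion that a variety covered by finitely many affine principal opens whose defining functions generate the unit ideal is affine; your surjective-open-immersion argument $G/K\hookrightarrow\operatorname{Spec}({}^K\Bbbk[G])$ reaches the same conclusion and correctly handles the a priori non-finite-generation of ${}^K\Bbbk[G]$. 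Both approaches are sound; the paper's buys self-containedness within its own formalism, yours makes the scheme-theoretic mechanism more transparent at the cost of heavier external inputs.
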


 \begin{proof} The equivalence of conditions (1) and (2) is a part of
   Theorem \ref{theo:characterizations}.  
The equivalence of the two conditions appearing in (2)  is due to
E.~Cline, B.~Parshall and L.~Scott (\cite{kn:CPS}, see also
\cite[Theorem 11.4.5, Theorem 
  11.6.7]{kn:nosotros} for a more elementary proof).
 We prove next that (3) is
   equivalent to the other three. First, observe that condition (2)
   implies in particular that $K$ is observable in $G$. Assuming (1)
   and taking $I$ as in (3), if we write $1=\sum_{i=1}^n f_ig_i$ with
   $f_i \in I$ and $g_i \in \Bbbk [G]$, then the map $\varphi:
   \bigoplus_i \Bbbk [G] \rightarrow \Bbbk [G]$,
   $\varphi(g_1,\dots,g_n)=\sum_{i=1}^n f_ig_i$ is surjective. Then,
   by the linear reductivity of the action of $K$ on $G$, there exist
   elements $\{\overline{g}_i \in {}^K\Bbbk [G]: i=1,\dots,n\}$ such
   that $\sum_{i=1}^n f_i\overline{g}_i=1$; therefore, $I={}^K\Bbbk
   [G]$. In order to prove the converse observe that since $K$ is
   observable in $G$ then $G/K$ is quasi--affine. Therefore, for some
   $0 \neq f \in {}^K\Bbbk [G]$, the subvariety $(G/K)_f$ is non empty
   and affine. Call $ I \subseteq {}^{K}\Bbbk [G]$ the ideal generated
   by the family $\bigl\{f \in {}^K\Bbbk [G]: (G/K)_f \,\, \text{is
     affine}\bigr\}$. If $x \in G$, then $x\cdot(G/K)_f = (G/K)_{f
     \cdot x^{-1}}$. Hence, the ideal $ I$ is stable by left
   translations by elements of $G$. Hence, $ I \Bbbk [G]$ has no
   zeroes and thus $ I \Bbbk [G]=\Bbbk [G]$. Using the hypothesis (4),
   we prove the existence a finite number of elements $f_1,\dots ,f_n
   \in {}^{K}\Bbbk [G]$, with the property that generate the unit
   ideal in ${}^{K}\Bbbk [G]$ and whose principal open subsets
   $(G/K)_{f_i}$ are affine. It is well known that in this situation
   $G/K$ is an affine variety (see \cite[Theorem, 1.4.49]{kn:nosotros}).
\end{proof}

\begin{example}
The condition that $K$ is observable in $G$ cannot be ommited in 
(4), Theorem \ref{theo:cps}. Indeed, if
$G\neq \{e\}$ is a reductive group and $K= B\subsetneq G$ is a Borel subgroup,
then ${}^B\Bbbk [G] =\Bbbk$ and $G/B\neq \{p\}$ is projective.
\end{example}

We finish this section with a characterization of the linear
reductivity of the action by translations on an affine
homogeneous space.

\begin{theo} \label{theo:homo} Let $K\subseteq G$ be a
closed inclusion of affine algebraic groups.  Then, the
action of $K$ on $G$ is linearly reductive and the action of
$G$ by translations on $G/K$ is linearly reductive if and
only $K$ is linearly reductive.
\end{theo}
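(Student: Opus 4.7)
The plan is to prove the two implications separately, in each case using the total integral characterization of linear reductivity from Theorem \ref{theo:characterizations}(4) together with the properties of the induction functor gathered in Observation \ref{obse:firstproperties}.

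For the implication $(\Leftarrow)$, suppose $K$ is linearly reductive. By Example \ref{example:inicial}, (1), every rational $K$-action is linearly reductive, so in particular the translation action of $K$ on $G$ is. To prove that $G$ acts linearly reductively on $G/K$ I plan to exhibit a total integral $p\colon \Bbbk[G]\to \Bbbk[G/K]$ directly. Since $K$ is linearly reductive there exists a $K$-invariant linear functional $\sigma_0\colon \Bbbk[K]\to\Bbbk$ with $\sigma_0(1)=1$, and composing with the unit $\Bbbk\hookrightarrow\Bbbk[G]$ produces a total integral $\tilde\sigma$ for the translation action of $K$ on $\Bbbk[G]$. I take $p$ to be the corresponding Reynolds operator $p(f)=\sum \sigma_0(\mathcal S f_1)f_0$, so that $p(\Bbbk[G])\subseteq {}^K\Bbbk[G]=\Bbbk[G/K]$ and $p(1)=1$. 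The central calculation is that the $K$-comodule structure on $\Bbbk[G]$ interacts with $G$ acting by left translations $L_{g_0}$ via $(L_{g_0}f)_0=L_{g_0}(f_0)$ and $(L_{g_0}f)_1=f_1$; since the coefficients $\sigma_0(\mathcal S f_1)$ are scalars they pull across $L_{g_0}$ trivially, yielding $p\circ L_{g_0}=L_{g_0}\circ p$. Thus $p$ is a $G$-equivariant total integral and Theorem \ref{theo:characterizations}(4) completes the argument.

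For the implication $(\Rightarrow)$, assume both $(K,G)$ and $(G,G/K)$ are linearly reductive and let $\psi\colon M\to N$ be a surjection in $\modK$; the goal is to show that ${}^K\!\psi\colon {}^K\!M\to {}^K\!N$ is surjective. I apply the induction functor to obtain $\operatorname{Ind}_K^G(\psi)\colon \operatorname{Ind}_K^G(M)\to \operatorname{Ind}_K^G(N)$. The linear reductivity of the action of $K$ on $G$ makes $G/K$ affine and the functor $\operatorname{Ind}_K^G$ exact (Theorem \ref{theo:cps}), so this induced map is surjective in $\modG$. The crucial structural observation is that $\operatorname{Ind}_K^G(M)={}^K\!(\Bbbk[G]\otimes M)$ is in fact naturally a $(G,\Bbbk[G/K])$-module: the $\Bbbk[G]$-action by multiplication on the first factor descends on $K$-fixed points to a $\Bbbk[G/K]={}^K\Bbbk[G]$-action, and this is compatible with the left-translation $G$-action because left and right translations on $G$ commute. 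Therefore $\operatorname{Ind}_K^G(\psi)$ is a morphism in the category of $(G,\Bbbk[G/K])$-modules, and the hypothesis that $G$ acts linearly reductively on $G/K$ yields surjectivity of the induced map on $G$-invariants ${}^G\!\operatorname{Ind}_K^G(M)\to {}^G\!\operatorname{Ind}_K^G(N)$. Observation \ref{obse:firstproperties}(1) identifies this with the original map ${}^K\!M\to {}^K\!N$, and the proof is complete.

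The main obstacle in both directions is maintaining the correct $G$-equivariance. In $(\Leftarrow)$, the Reynolds operator is only guaranteed to be natural with respect to $(K,R)$-module maps, while $L_{g_0}$ is merely a $K$-equivariant algebra map and not $\Bbbk[G]$-linear; this is circumvented by choosing $\sigma_0$ to take values in $\Bbbk$, so the scalar coefficients slide past $L_{g_0}$. In $(\Rightarrow)$, the delicate point is recognizing the $(G,\Bbbk[G/K])$-module structure on the induced space, which is precisely what allows the hypothesis about $G/K$ to be invoked; without this refinement one would only obtain surjectivity of a map of $G$-modules, which is insufficient to extract the conclusion about $K$.
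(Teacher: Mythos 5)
Your argument is correct, and the two directions fare differently against the paper. The implication from the linear reductivity of $(K,G)$ and of $(G,G/K)$ to that of $K$ is essentially the paper's own proof: induce, use exactness of $\operatorname{Ind}_K^G$, observe that the induced surjection lives in ${}_{(G,\,{}^K\Bbbk[G])}\mathcal M$, and identify ${}^G\bigl(\operatorname{Ind}_K^G(M)\bigr)$ with ${}^K\!M$. The converse is where you genuinely diverge: the paper invokes Mackey's imprimitivity theorem to make $\operatorname{Ind}_K^G$ an equivalence of categories onto ${}_{(G,\,{}^K\Bbbk[G])}\mathcal M$ and then transfers the surjectivity question back to $\modK$ by the transfer principle, whereas you obtain the linear reductivity of $(G,G/K)$ by hand from a total integral $\sigma_0:\Bbbk[K]\to\Bbbk$ for the linearly reductive group $K$. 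Your route is more elementary and self-contained; the paper's buys the stronger structural fact that every $(G,{}^K\Bbbk[G])$-module is induced. One bookkeeping point in your construction: by Definition \ref{defi:integral} and Observation \ref{obse:intinj}, a total integral must intertwine the coaction $\Delta$ on the source --- that is, the right regular action $f\mapsto f(\cdot\,x)$ --- with the coaction on the target, while your $p$ commutes with left translations on both sides. This is repaired by taking $\sigma=p\circ\mathcal S$, since $\mathcal S$ exchanges the two regular actions; alternatively, your identity $p\circ L_{g_0}=L_{g_0}\circ p$ already exhibits ${}^K\Bbbk[G]$ as a $G$--module direct summand of the injective module $\Bbbk[G]$, so condition (5) of Theorem \ref{theo:characterizations} applies directly. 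Either patch is one line, so this is an imprecision rather than a gap.
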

\begin{proof} Let $f:M \rightarrow\!\!\!\!\!\!\!\rightarrow
  N$ be a surjective morphism of rational $K$-modules. By
  the linear reductivity of $(K,G)$, it follows that
  $\operatorname{Ind}_K^G(M)\to \operatorname{Ind}_K^G(N)$
  is a surjective morphism of $\bigr(G, {}^K\,\Bbbk 
  [G]\bigr)$-modules.  Hence, the linear reductivity of
  $(G,G/K)$ guarantees that
  ${}^G\bigl(\operatorname{Ind}_K^G(M)\bigr) \to
  {}^G\bigl(\operatorname{Ind}_K^G(N)\bigr)$ is also
  surjective and then $f|_{_{{}^K\!M}}:{}^KM \to {}^KN$ is
  surjective and $K$ is linearly reductive.

Now conversely, let $K\subseteq G$ be an inclusion of affine
algebraic groups, with $K$ linearly reductive and take
$f:M\to N$ a surjective morphism of $\bigl(G,{}^K\Bbbk 
[G]\bigr)$-modules.  Then, from the linear reductivity of
$K$, one deduces that $G/K$ is an affine variety and it
follows from the Mackey's imprimitivity theorem
(\cite[Theorem 2.7]{kn:CPSMackey}) that
$\operatorname{Ind}_K^G: {}_K\mathcal M \to
{}_{\bigr(G,{}^K\Bbbk  [G]\bigr)}\mathcal M$ is an
equivalence of categories.  Then, there exists a couple of
$K$--modules $M_0, N_0$ and a surjective morphism of
$K$-modules $g: M_0 \to N_0$ such that $f=
\operatorname{Ind}_K^G(g): \operatorname{Ind}_K^G(M_0)=M\to
\operatorname{Ind}_K^G(N_0)=N$.  By the transfer principle,
it follows that ${}^GM= {}^KM_0$, ${}^GN= {}^KN_0$ and
$f|_{_{{}^GM}}= g|_{_{{}^KM_0}}$, and the surjectivity of
$f|_{_{{}^GM}}: {}^GM \rightarrow {}^GM$ follows from the
linear reductivity of $K$.
\end{proof}

\section{Transitivity results} \label{section:transitivity}

Next we present a relative version of Matsushima's
criterion. (see \cite{kn:matsu} or \cite[Theorem
11.7.1]{kn:nosotros} for the classical version).

\begin{theo}[Generalized Matsushima's
criterion] \label{theo:matsu}
\begin{enumerate}
\item \label{item:matsu1}Let $K \subseteq G$ be a closed
inclusion of affine algebraic groups and $R$ a rational
$G$--module algebra. Suppose that the action of $K$ on $G$
by translations is linearly reductive. If the action of $G$
on $R$ is linearly reductive, then the action of $K$ on $R$
is linearly reductive.
\item \label{item:matsu2}Let $K \triangleleft~G$ be a closed
normal inclusion of affine algebraic groups and $R$ a
rational $G$--module algebra. Then, the action of $G$ on $R$
is linearly reductive if and only if the action of $K$ on
$R$ is linearly reductive and the action of $G/K$ on
${}^K\!R$ is linearly reductive.
\end{enumerate}
\end{theo}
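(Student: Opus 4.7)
For part (\ref{item:matsu1}) the plan is to apply the total integral characterization of Theorem \ref{theo:characterizations}. The linear reductivity of the translation action of $K$ on $G$ furnishes a $K$-equivariant total integral $\tau:\Bbbk[K]\to\Bbbk[G]$, and the linear reductivity of $(G,R)$ furnishes a $G$-equivariant (so a fortiori $K$-equivariant) total integral $\sigma:\Bbbk[G]\to R$. The composite $\sigma\circ\tau:\Bbbk[K]\to R$ is then a $K$-equivariant linear map sending $1$ to $1$, so $(K,R)$ is linearly reductive.

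For the forward direction of part (\ref{item:matsu2}), I observe that since $K\triangleleft G$ the quotient $G/K$ is an affine algebraic group, and therefore by Theorem \ref{theo:cps} the action of $K$ on $G$ by translations is linearly reductive; part (\ref{item:matsu1}) then yields the linear reductivity of $(K,R)$ for free. For the second assertion I take a $G$-equivariant total integral $\sigma:\Bbbk[G]\to R$ and restrict along the inclusion $\Bbbk[G/K]={}^K\!\Bbbk[G]\hookrightarrow\Bbbk[G]$; because $\sigma$ is $K$-equivariant its image on ${}^K\!\Bbbk[G]$ lies in ${}^K\!R$, and the resulting map $\bar\sigma:\Bbbk[G/K]\to{}^K\!R$ is a $G/K$-equivariant total integral, proving that $(G/K,{}^K\!R)$ is linearly reductive.

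For the converse of part (\ref{item:matsu2}), the plan is to build a natural family of Reynolds operators on $\modGR$ by composing the Reynolds operators supplied by the two hypotheses. Given $M\in\modGR$, first apply the Reynolds operator $p_K^M:M\to{}^K\!M$ coming from $(K,R)$ linear reductivity. By the normality of $K$, a diagrammatic argument as in Remark \ref{obse:propertiesreynolds}(\ref{item:forlater}) (extended from $R$ to general $M\in\modGR$) shows that $p_K^M$ is $G$-equivariant. Consequently ${}^K\!M$ is $G$-stable, carries a well-defined $G/K$-action, and together with its ${}^K\!R$-module structure becomes an object of ${}_{(G/K,{}^K\!R)}\mathcal M$. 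Apply next the Reynolds operator $\bar p^{{}^K\!M}:{}^K\!M\to{}^{G/K}({}^K\!M)={}^G\!M$ furnished by $(G/K,{}^K\!R)$ linear reductivity. The composite $p_M:=\bar p^{{}^K\!M}\circ p_K^M$ is a natural retraction $M\to{}^G\!M$; by the same reasoning used to prove $(4)\Rightarrow(2)$ in Theorem \ref{theo:characterizations}, its existence forces the $G$-fixed point functor to preserve surjections in $\modGR$, which is exactly linear reductivity of $(G,R)$.

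The main obstacle I anticipate is justifying that ${}^K\!M$ really is an object of ${}_{(G/K,{}^K\!R)}\mathcal M$ and that $p_K^M$ is $G$-equivariant. Both rely crucially on the normality of $K$ in $G$: the first uses the $G$-stability of ${}^K\!M$ and ${}^K\!R$ together with triviality of the residual $K$-action, and the second is the extension of the normality argument of Remark \ref{obse:propertiesreynolds}(\ref{item:forlater}) from $R$ to arbitrary $M\in\modGR$, which should be essentially formal via the same twisted-module diagram.
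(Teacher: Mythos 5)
Your argument is correct, but it follows a genuinely different route from the paper's at every stage, systematically replacing the paper's element-chasing with the total-integral and Reynolds-operator characterizations of Theorem \ref{theo:characterizations}. For part (1) the paper applies ${}^K\!\bigl(\Bbbk[G]\otimes -\bigr)$ to a surjection of $(K,R)$--modules, tensors with $R$, extracts a $G$--fixed preimage of $1\otimes 1\otimes m_2$ and evaluates at $1\in G$; your composite $\sigma\circ\tau$ of total integrals reaches the same conclusion in two lines (the point that makes it work is that the restriction to $K$ of the right regular $G$--action on $\Bbbk[G]$ is exactly the translation action, so the equivariances of $\tau$ and $\sigma$ are compatible). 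For the forward half of part (2) the paper again argues with elements of $R\otimes M_i$ and the Reynolds operator $p_R$, whereas your restriction of $\sigma$ to ${}^K\Bbbk[G]=\Bbbk[G/K]$ is cleaner; it silently uses that ${}^K\Bbbk[G]$ and ${}^K\!R$ are $G$--stable with trivial residual $K$--action, which is precisely where normality enters. For the converse the paper first lifts a $G$--fixed element to a $K$--fixed one and then improves it inside the $(G/K,{}^K\!R)$--submodules generated by the two elements, while you compose the two Reynolds operators; both work, and yours has the advantage of exhibiting the Reynolds operator for $(G,R)$ explicitly.

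One remark on the step you flag as the main obstacle: the $G$--equivariance of $p_K^M$ is not actually needed, and it is the one assertion in your writeup I would not take for granted. For an arbitrary family of Reynolds operators for $(K,R)$, the identity $p_K^M(g\cdot m)=g\cdot p_K^M(m)$ requires a conjugation-compatibility of the underlying total integral $\Bbbk[K]\to R$ that does not come for free; even the paper's Observation \ref{obse:propertiesreynolds}, \eqref{item:forlater} glosses over the fact that $R_g$ is an object of ${}_{(K,R_g)}\mathcal M$ rather than of $\modKR$, so the naturality being invoked there is not literal naturality over $\modKR$. Fortunately your argument survives without this claim: the $G$--stability of ${}^K\!M$ and the triviality of the residual $K$--action follow directly from normality, since $h\cdot(g\cdot m)=g\cdot\bigl((g^{-1}hg)\cdot m\bigr)$ for $h\in K$, and this is all that is needed to place ${}^K\!M$ in ${}_{(G/K,{}^K\!R)}\mathcal M$. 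The final surjectivity argument then only uses the naturality of $p_K$ on $\modKR$, the naturality of $\bar p$ on ${}_{(G/K,{}^K\!R)}\mathcal M$, and the fact that each restricts to the identity on the relevant fixed points. I would rewrite that paragraph to drop the equivariance claim and argue the stability of ${}^K\!M$ directly.
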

\begin{proof}
\begin{enumerate}
\item Let $\varphi:M_1 \rightarrow M_2$ be a surjective
morphism in $\modKR$ and take an arbitrary element $m_2 \in
{}^K\!M_2$.  It follows from the reductivity hypothesis on
$(K,G)$ that $\operatorname{id} \otimes \varphi:
{}^K\!\bigl(\Bbbk [G] \otimes M_1\bigl) \rightarrow
{}^K\!\bigl(\Bbbk [G] \otimes M_2\bigr)$ is a surjective
morphism of rational $G$--modules. Hence, $\operatorname
{id} \otimes \operatorname {id} \otimes \varphi: R \otimes
{}^K\!\bigl(\Bbbk [G] \otimes M_1\bigr) \rightarrow R
\otimes {}^K\!\bigl(\Bbbk [G] \otimes M_2\bigr)$ is also
a surjective morphism in the category $\modGR$. Consider the
element $1 \otimes 1 \otimes m_2 \in R \otimes
{}^K\!\bigl(\Bbbk [G] \otimes M_2\bigr)$. As the action
of an element $x \in G$ on $\sum f_i \otimes m_{2,i} \in
{}^K\!\bigl(\Bbbk [G] \otimes M_2\bigr)$ is defined as
$x\cdot\bigl(\sum f_i \otimes m_{2,i}\bigr)= \sum f_i\cdot
x^{-1} \otimes m_{2,i}$, it is clear that the element $1
\otimes 1 \otimes m_2$ is $G$--fixed. By definition of the
linear reductivity of $G$ on $R$, we deduce that there is an
element $\xi=\sum_{ij} r_i \otimes f_{ij} \otimes m_{1,ij}
\in R \otimes {}^K\!\bigl(\Bbbk [G] \otimes M_1\bigr)$
fixed by the action of $G$, and satisfying that
\begin{equation}\label{eqn:star} \sum_{ij} r_i \otimes
f_{ij} \otimes \varphi(m_{1,ij})=1 \otimes 1 \otimes m_2.
\end{equation} We evaluate the middle term of $\xi$ at $1
\in G$ and call the resulting element $m'_1= \sum_{ij}
f_{ij}(1) r_im_{1,ij}$.  Then evaluating equation
\eqref{eqn:star} also at $1 \in G$, we obtain the equality:
$\varphi(m'_1)=m_2$. All that remains to prove is that the
element $m'_1 \in {}^K\!M_1$. As the element $\xi =
\sum_{ij} r_i \otimes f_{ij} \otimes m_{1,ij}$ is
$G$--fixed, for $y \in K$ we deduce that $\sum_{ij} y\cdot
r_i \otimes f_{ij}\cdot y^{-1} \otimes m_{1,ij} = \sum_{ij}
r_i \otimes f_{ij} \otimes m_{1,ij}$. After evaluation at 1
and multiplication we deduce that $m'_1 = \sum_{ij} r_i
f_{ij}(1)m_{1,ij}=\sum_{ij} (y \cdot r_i)
f_{ij}(y^{-1})m_{1,ij} = \sum_{ij}(y\cdot r_i) f_{ij}(1)
y\cdot m_{1,ij} = y\cdot m'_1$, where the third equality is
a consequence of the considerations at the beginning of
Section \ref{section:grsubgr}, see Observation  
\ref{obse:firstproperties}. 
\item Assume that the action of $K$ on $R$ is linearly reductive and
  the action of $G/K$ on ${}^K\!R$ is linearly reductive. Given
  $\varphi:M_1 \rightarrow M_2$ a surjective morphism in the category
  $\modGR$ and take $m_2 \in {}^G\!M_2 \subseteq {}^K\!M_2$ and using
  the linear reductivity of $(K,R)$, we prove the existence a certain
  element $m_1 \in {}^K\!M_1$ with the property that
  $\varphi(m_1)=m_2$. Call $N_1 \subseteq {}^K\!M_1$ the
  $(G/K,{}^K\!R)$--module generated by $m_1$ in ${}^K\!M_1$ and $N_2
  \subseteq {}^K\!M_2$ the $(G/K,{}^K\!R)$--module generated by $m_2$
  in ${}^K\!M_2$. We can restrict $\varphi$ to a surjective morphism
  of $(G/K,{}^K\!R)$--modules, that we continue calling $\varphi:N_1
  \rightarrow N_2$.  In this situation, using the linear reductivity
  of the action of $G/K$ on ${}^K\!R$, applied to the map $\varphi:N_1
  \rightarrow N_2$ and to the element $m_2 \in {}^{G/K}\!N_2$, we find
  an element $m_1 \in {}^{G/K}\!({}^KN_1)= {}^G\!N_1$, such that
  $\varphi(m_1)=m_2$.  Conversely, assume that $(G,R)$ is linearly
  reductive, using Theorem \ref{theo:matsu} we deduce that $(K,R)$ is
  linearly reductive. To prove the remainder part consider a
  surjective morphism of $(G/K,{}^{G/K}R)$--modules $\varphi:{M_1}
  \rightarrow {M_2}$ a and $m_2 \in {}^{G/K}M_2$. Extend
  the action to view the map $\varphi$ as living in the category of
  $G$--modules and consider the surjective morphism $\operatorname{id}
  \otimes \varphi: R \otimes M_1 \rightarrow R \otimes M_2 \in
  \modGR$, and $1 \otimes m_2 \in {}^{G}(R \otimes M_2)$. Using the
  hypothesis we deduce the existence of an element $\sum r_i \otimes
  m_{1,i} \in {}^{G}(R \otimes M_1)$ such that $\sum r_i \otimes
  \varphi(m_{1,i})= 1 \otimes m_2$. Applying the Reynolds operator
  $p_R:R \rightarrow {}^{K} R$, we deduce that $\sum
  \varphi(p_R(r_i)m_{1,i})=\sum p_R(r_i)\varphi(m_{1,i})= m_2$. The
  element $\sum p_R(r_i)m_{1,i} \in M_1$ is fixed by the
  action of $G/K$ and hence, the proof is finished (see Observation
  \ref{obse:propertiesreynolds},\eqref{item:forlater}).
\end{enumerate}
\end{proof}


\begin{obse}\label{obse:matsulinear} \
\begin{enumerate}
\item \label{item:infres} The second assertion of Theorem
  \ref{theo:matsu} can also be proved using that for a rational $G$--module
  $V$, the \emph{Inflation--Restriction sequence} for the rational
  cohomology groups:
  \begin{equation} \label{eq:infres} 0 \rightarrow
  \operatorname{H}^1(G/K, {}^K\!(R \otimes V))
  \xrightarrow{\operatorname{Inf}} \operatorname{H}^1(G, R
  \otimes V) \xrightarrow{\operatorname{Res}}
  \operatorname{H}^1(K, R \otimes V)\
  \end{equation} 
is exact: see \cite{kn:matsuhab,kn:ho,kn:jantzen}.

  Indeed, in the hypothesis of the above theorem using Theorem
  \ref{theo:characterizations}, we conclude that the first and
  the last terms of the sequence are zero. Hence, the middle
  term is also zero and that guarantees -- by the same Theorem
  \ref{theo:characterizations} -- that the action of $G$ on $R$
  is linearly reductive.

\item \label{item:casonormal} The converse of the first part
of Theorem \ref{theo:matsu} is false, as the case $G=\Bbbk ^*\times \Bbbk $, $K=\Bbbk ^*\times \{0\}$ and
$R=\Bbbk $ shows.
 
\item In particular Theorem \ref{theo:matsu} above provides a proof of
the following well known assertion: let $K \subseteq G
\subseteq H$ be a tower of inclusions of closed affine
algebraic groups. If the homogeneous spaces $G/K$ and $H/G$
are affine, then $H/K$ is also affine.

\item In geometric terms Theorem \ref{theo:matsu} can be formulated as
  follows  (see also Theorem \ref{theo:liquotiens}).

  \noindent (1) \emph {Let $K \subseteq G$ be a closed inclusion of
    affine algebraic groups and $X$ an affine $G$--variety. If the
    action of $G$ on $X$ is linearly reductive and the homogeneous
    space $G/K$ is affine, then the action of $K$ on $X$ is linearly
    reductive. }

\noindent (2)
\emph {
Let $G$ be an affine
algebraic group and $K\triangleleft G$ a closed normal
subgroup. Let $X$ be a affine $G$--variety such that (i) the
action of $K$ on $X$ is linearly reductive; (ii) the
quotient variety $X/K$ exists and it is affine and the action of
$G/K$ on $X/K$ is also linearly reductive.  Then the action
of $G$ on $X$ is linearly reductive. Conversely, if the
action of $G$ on $X$ is linearly reductive and the quotient
variety $X/K$ exists and it is affine, then the action of $G/K$
in $X/K$ is linearly reductive.}

\item In the case that $X$ is a point, or alternatively that $R$
is the base field, the above results read as follows.

\medskip
\noindent
\emph {Let $K \triangleleft\, G$ is a normal subgroup of the
affine algebraic group $G$. Then $K$ and $G/K$ are linearly
reductive if and only if $G$ is linearly reductive.}
\medskip
\end{enumerate}
\end{obse}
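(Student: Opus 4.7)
The plan is to specialize Theorem \ref{theo:matsu}(2) to the trivial algebra $R=\Bbbk$. Take $R=\Bbbk$ viewed as a rational $G$--module algebra with trivial $G$-action, so that the restricted action of $K$ is also trivial. Then ${}^K\!R = \Bbbk$, and the induced action of $G/K$ on ${}^K\!R$ is again trivial.

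By the second half of Example \ref{example:inicial}(1), linear reductivity of the trivial action of an affine algebraic group $H$ on $\Bbbk$ is equivalent to the classical linear reductivity of $H$ itself. Indeed, for any surjection $\varphi\colon N_1 \twoheadrightarrow N_2$ in $\modH$, the extension of scalars $\operatorname{id}\otimes\varphi$ in ${}_{(H,\Bbbk)}\mathcal M$ is $\varphi$ itself, and its induced map on $H$-fixed parts is surjective precisely when $\varphi({}^H\!N_1) = {}^H\!N_2$.

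Applying Theorem \ref{theo:matsu}(2) with this choice of $R$ then yields that the action of $G$ on $\Bbbk$ is linearly reductive if and only if the action of $K$ on $\Bbbk$ is linearly reductive and the action of $G/K$ on ${}^K\!\Bbbk = \Bbbk$ is linearly reductive. In view of the equivalence recalled above, this translates exactly into the assertion that $G$ is linearly reductive if and only if both $K$ and $G/K$ are linearly reductive.

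Since the statement is a direct specialization of a theorem already established in the paper, no genuine obstacle is expected; the only point requiring attention is the identification of linear reductivity of trivial actions with the classical group-theoretic notion of linear reductivity, which is furnished by Example \ref{example:inicial}(1).
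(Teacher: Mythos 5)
Your treatment of the final item is correct and is exactly the route the paper intends: specialize Theorem \ref{theo:matsu}, part (2), to $R=\Bbbk$ with the trivial action, note that ${}^K\Bbbk=\Bbbk$ with trivial $G/K$--action, and use both directions of Example \ref{example:inicial}(1) to identify linear reductivity of the trivial action of a group on $\Bbbk$ with classical linear reductivity of that group. Nothing is missing in that portion.

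The gap is one of coverage: the statement is the whole of Observation \ref{obse:matsulinear}, which has five parts, and your proposal addresses only the fifth. None of the other four follows from the specialization you carried out. Concretely, you say nothing about: (1) the alternative cohomological proof of Theorem \ref{theo:matsu}(2), where the point is that in the inflation--restriction sequence \eqref{eq:infres} the outer terms vanish by Theorem \ref{theo:characterizations}, hence $\operatorname{H}^1(G,R\otimes V)=0$ for every rational $G$--module $V$, and one must also note that $\operatorname{H}^1$--vanishing for all such modules already forces exactness of the fixed--point functor and hence linear reductivity; (2) the counterexample $G=\Bbbk^*\times\Bbbk$, $K=\Bbbk^*\times\{0\}$, $R=\Bbbk$, where $G/K\cong\Bbbk$ is affine and $K$ is linearly reductive, so the action of $K$ on $\Bbbk$ is linearly reductive, yet $G$ has nontrivial unipotent radical and so does not act linearly reductively on $\Bbbk$, refuting the converse of Theorem \ref{theo:matsu}(1); (3) the tower statement, which is obtained by applying Theorem \ref{theo:matsu}(1) to the closed inclusion $K\subseteq G$ and the rational $G$--module algebra $R=\Bbbk[H]$, the hypotheses being translated via Theorem \ref{theo:cps} ($G/K$ affine gives linear reductivity of the action of $K$ on $G$, $H/G$ affine gives linear reductivity of the action of $G$ on $\Bbbk[H]$, and the conclusion gives $H/K$ affine); and (4) the purely notational geometric reformulation. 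Each of these is short, but as written your proposal establishes only one of the five assertions.
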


As an immediate application of Theorem \ref{theo:matsu} to
the case when $R=\Bbbk $, we obtain the classical
Matsushima's criterion in characteristic zero.

\begin{coro}[Classical Matsushima's
criterion]\label{coro:matsu} Let $G$ be a reductive group
and $K \subseteq G$ be closed subgroup and assume that
$\Bbbk $ has characteristic zero.  Then $G/K$ is an
affine variety if and only if $K$ is reductive.
\end{coro}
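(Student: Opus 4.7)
My plan is to obtain the classical statement as a direct specialization of the results already established in this section, taking $R=\Bbbk$ with the trivial $G$--action. The characteristic zero hypothesis enters only through the classical equivalence of ``reductive'' and ``linearly reductive'' for affine algebraic groups in that setting; once this dictionary is in place, no further computation is needed, and the claim reduces to an assembly of Theorems \ref{theo:cps}, \ref{theo:homo} and \ref{theo:matsu}.

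For the ``only if'' direction I would suppose $G/K$ is affine. By Theorem \ref{theo:cps} this is equivalent to the action of $K$ on $G$ by right translations being linearly reductive. Since $G$ is reductive and $\operatorname{char}\Bbbk=0$, $G$ is linearly reductive, and therefore its trivial action on the base field $R=\Bbbk$ is linearly reductive (cf.\ Example \ref{example:inicial}(1)). I would then apply Theorem \ref{theo:matsu}(\ref{item:matsu1}) with this choice of $R$: both hypotheses -- linear reductivity of $(K,G)$ and of $(G,\Bbbk)$ -- are satisfied, so the action of $K$ on $\Bbbk$ is linearly reductive. But a trivial action is linearly reductive precisely when the acting group is linearly reductive, so $K$ is linearly reductive, hence reductive.

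For the converse, suppose $K$ is reductive and therefore linearly reductive. Theorem \ref{theo:homo} asserts that linear reductivity of $K$ is equivalent to the simultaneous linear reductivity of the action of $K$ on $G$ and of the action of $G$ on $G/K$; keeping only the first piece of information and feeding it into Theorem \ref{theo:cps} yields immediately that $G/K$ is affine. No genuine obstacle arises in this argument; the only delicate point is methodological, namely to invoke the characteristic zero hypothesis exactly twice -- once to pass from ``$G$ reductive'' to ``$G$ linearly reductive'', and once to translate the conclusion ``$K$ linearly reductive'' back to ``$K$ reductive'' -- and nowhere else.
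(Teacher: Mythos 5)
Your proof is correct and matches the paper's intent: the paper offers no explicit argument beyond the remark that the corollary is an immediate application of Theorem \ref{theo:matsu} with $R=\Bbbk$, and your forward direction is exactly that specialization (combining Theorem \ref{theo:cps} with the fact that linear reductivity of the trivial action on $\Bbbk$ is linear reductivity of the group), while your converse correctly supplies the other half via Theorem \ref{theo:homo} and Theorem \ref{theo:cps}. The only cosmetic slip is the bookkeeping claim that characteristic zero is invoked ``exactly twice''---you actually use the reductive/linearly-reductive dictionary three times (once for $G$ and once for $K$ in each direction)---but this does not affect the validity of the argument.
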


\begin{coro} Consider a tower $K \triangleleft\, G \subseteq
L$ of closed subgroups of an affine algebraic group $L$ such
that the first inclusion is normal. Then, if the quotient
variety $L/K$ is affine and the action of $G/K$ on $L/K$ is
linearly reductive, then the homogeneous space $L/G$ is
affine and conversely. \qqed
\end{coro}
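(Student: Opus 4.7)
The plan is to translate both hypotheses and conclusion into algebraic statements about linear reductivity of actions on $R=\Bbbk[L]$, and then apply the normal-case transitivity result Theorem \ref{theo:matsu}\eqref{item:matsu2} with the normal inclusion $K\triangleleft G$ and the rational $G$-module algebra $R$.

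First I set up the dictionary. The group $L$ acts on itself by right translation; restricting this action to $G$ and $K$, the coordinate algebra $R=\Bbbk[L]$ becomes a rational $G$-module algebra (and a rational $K$-module algebra). By Theorem \ref{theo:cps}, the homogeneous space $L/K$ is affine exactly when the action of $K$ on $L$ is linearly reductive, i.e.\ $(K,R)$ is linearly reductive; and $L/G$ is affine exactly when $(G,R)$ is linearly reductive. Moreover, since $K\triangleleft G$, the $G$-module structure on $R$ restricts to a $G/K$-module structure on ${}^{K}\!R$; when $L/K$ is affine we have the identification ${}^{K}\!R=\Bbbk[L/K]$ as $G/K$-module algebras, so that linear reductivity of the $G/K$-action on $L/K$ is the same as linear reductivity of $(G/K,{}^{K}\!R)$.

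For the forward direction, suppose $L/K$ is affine and that $(G/K,L/K)$ is linearly reductive. The first hypothesis gives that $(K,R)$ is linearly reductive, and by the identification above the second hypothesis gives that $(G/K,{}^{K}\!R)$ is linearly reductive. Theorem \ref{theo:matsu}\eqref{item:matsu2} then yields that $(G,R)$ is linearly reductive, and a second appeal to Theorem \ref{theo:cps} concludes that $L/G$ is affine.

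For the converse, assume $L/G$ is affine, so that $(G,R)$ is linearly reductive by Theorem \ref{theo:cps}. Theorem \ref{theo:matsu}\eqref{item:matsu2} applied to $K\triangleleft G$ and $R$ gives that both $(K,R)$ and $(G/K,{}^{K}\!R)$ are linearly reductive. The first of these, together with Theorem \ref{theo:cps}, yields that $L/K$ is affine; hence ${}^{K}\!R=\Bbbk[L/K]$, and the second then says exactly that the action of $G/K$ on $L/K$ is linearly reductive. There is no real obstacle here beyond confirming that the $K$-invariants of $\Bbbk[L]$ give the coordinate ring of $L/K$ once one knows $L/K$ is affine; this is standard and is used implicitly in the translation.
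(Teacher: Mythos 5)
Your proof is correct and follows exactly the route the paper intends for this corollary (which it leaves to the reader): translate both affineness conditions into linear reductivity of the translation actions on $\Bbbk[L]$ via Theorem \ref{theo:cps}, identify ${}^K\Bbbk[L]$ with $\Bbbk[L/K]$, and apply the normal-subgroup transitivity statement of Theorem \ref{theo:matsu}, part (2), in both directions. Nothing is missing.
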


The following theorem becomes natural in the context of
considerations of transitivity.

\begin{theo}
\label{theo:espind} Let $K \subseteq G$ be an inclusion of
affine algebraic groups and assume that $K$ is exact in $G$.
Let $R$ be a $K$--module algebra and consider the
$G$--module algebra $\operatorname{Ind}_K^G(R)$. If the
action of $G$ on $\operatorname{Ind}_K^G(R)$ is linearly
reductive, so is the action of $K$ on $R$.
\end{theo}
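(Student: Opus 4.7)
My strategy is to use the induction functor to transfer the question about $(K,R)$-modules up to the category of $(G,\operatorname{Ind}_K^G(R))$-modules, where the hypothesis applies directly. By Theorem~\ref{theo:characterizations}~(2), it suffices to show that any surjective morphism $\varphi:M_1\to M_2$ in $\modKR$ restricts to a surjection on $K$-fixed parts; so I fix such a $\varphi$ together with an element $m_2\in{}^K\!M_2$, and look for a preimage in ${}^K\!M_1$.

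I would first upgrade $\operatorname{Ind}_K^G$ to a functor $\modKR \to {}_{(G,\operatorname{Ind}_K^G(R))}\M$: the space $\Bbbk[G]\otimes M$ is naturally a $(K,\Bbbk[G]\otimes R)$-module under diagonal $K$-actions, and passing to $K$-invariants makes $\operatorname{Ind}_K^G(M)={}^K(\Bbbk[G]\otimes M)$ into a module over $\operatorname{Ind}_K^G(R)={}^K(\Bbbk[G]\otimes R)$ that is compatible with the $G$-action by left translation on $\Bbbk[G]$. On arrows, $\operatorname{Ind}_K^G(\varphi)=(\id\otimes\varphi)|_{\operatorname{Ind}_K^G(M_1)}$ is visibly $\operatorname{Ind}_K^G(R)$-linear and $G$-equivariant. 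Since $K$ is exact in $G$, the functor $\operatorname{Ind}_K^G$ is exact, so $\operatorname{Ind}_K^G(\varphi)$ is a surjection in ${}_{(G,\operatorname{Ind}_K^G(R))}\M$.

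Next I would invoke the linear reductivity of $(G,\operatorname{Ind}_K^G(R))$ via Theorem~\ref{theo:characterizations}~(2): under the transfer identification ${}^G\!\operatorname{Ind}_K^G(M)={}^K\!M$ of Observation~\ref{obse:firstproperties}~(1), given explicitly by $1\otimes m\leftrightarrow m$, the element $m_2\in{}^K\!M_2$ is a $G$-fixed element of $\operatorname{Ind}_K^G(M_2)$, and therefore lifts to some $\widetilde m_1\in{}^G\!\operatorname{Ind}_K^G(M_1)={}^K\!M_1$. Naturality of the identification in $M$ then forces $\varphi(\widetilde m_1)=m_2$, which is the desired conclusion. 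The only bookkeeping step---confirming that $\operatorname{Ind}_K^G$ really lands in ${}_{(G,\operatorname{Ind}_K^G(R))}\M$---is the main (mild) obstacle, and is essentially formal once one observes that the $\operatorname{Ind}_K^G(R)$-action on $\operatorname{Ind}_K^G(M)$ is well defined (products of $K$-invariants are $K$-invariant under diagonal actions) and commutes with the left-translation $G$-action.
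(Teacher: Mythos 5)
Your proposal is correct and follows essentially the same route as the paper's own proof: induce the surjection $\varphi$ up to the category of $\bigl(G,\operatorname{Ind}_K^G(R)\bigr)$--modules using exactness of $K$ in $G$, apply the linear reductivity of that pair, and transfer back via the identification ${}^G\bigl(\operatorname{Ind}_K^G(M)\bigr)={}^K\!M$. The extra bookkeeping you supply (that $\operatorname{Ind}_K^G$ genuinely lands in ${}_{(G,\operatorname{Ind}_K^G(R))}\mathcal M$ and that the transfer identification is natural in $M$) is exactly what the paper leaves implicit, so no gap remains.
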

 
\begin{proof} Assume that $\varphi:M \rightarrow N$ is a
surjective morphism in the category of $(K,R)$--modules. By
the exactness hypothesis the morphism
$\operatorname{Ind}(\varphi): \operatorname{Ind}_K^G(M)
\rightarrow \operatorname{Ind}_K^G(N)$ is also a surjective
morphism in the category of
$\bigl(G,\operatorname{Ind}_K^G(R)\bigr)$--modules. As for
any rational $K$--module $M$ the $K$--invariants of $M$ and
the $G$--invariants of $\operatorname{Ind}_k^G(M)$ are
related by the equality
${}^G\bigl(\operatorname{Ind}_K^G(M)\bigr)={}^K\!M$ the result follows directly.
\end{proof}

Let $K \subseteq G$ be a closed inclusion of affine
algebraic groups and assume that $X$ is an affine
$K$--variety.  In this context we can form the \emph{induced
variety} $X *_K G$, that is the geometric quotient of
$X\times G$ by the action $K\times (X\times G)\to X\times
G$, $a\cdot (x,g)=( x\cdot a, a^{-1}g)$. It is well known
that this quotient exists, and has a natural structure of
$G$--variety.  Applying Theorem \ref{theo:espind}, we obtain
some insight in the relationship between the reductivity of
the action of $K$ on $X$ and the action of $G$ in $G*_K X$.

\begin{theo} Let $K \subseteq G$ be an inclusion of affine
algebraic groups and assume that $K$ is exact in $G$. Let
$X$ be an affine $K$-variety. Then if $X*_KG$ is affine, and
the action of $G$ on $X*_KG$ is linearly reductive, then the
action of $K$ on $X$ is also linearly reductive.
\end{theo}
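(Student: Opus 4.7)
The plan is to reduce the statement directly to Theorem~\ref{theo:espind} applied to the rational $K$--module algebra $R = \Bbbk [X]$. The necessary input is a $G$--equivariant identification of the coordinate ring of the induced variety $X *_K G$ with $\operatorname{Ind}_K^G(\Bbbk [X])$ as $G$--module algebras; once this is in hand, the hypothesis on $X *_K G$ translates verbatim into the hypothesis of Theorem~\ref{theo:espind}, and the conclusion for $R = \Bbbk [X]$ is precisely the linear reductivity of the action of $K$ on $X$.

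First I would make explicit the coordinate ring of $X *_K G$. By construction, $X *_K G$ is the geometric quotient of $X \times G$ by the $K$--action $a\cdot(x,g) = (x\cdot a, a^{-1}g)$, with a residual $G$--action coming from (right) translations on the second factor. Since $X *_K G$ is assumed affine, its coordinate ring is the algebra of $K$--invariants of $\Bbbk [X]\otimes \Bbbk [G]$, where $K$ acts diagonally. Comparing with the formula $\operatorname{Ind}_K^G(R) = {}^K(\Bbbk [G]\otimes R)$ with the diagonal $K$--module structure and the $G$--action given in Section~\ref{section:grsubgr}, this is, up to the standard convention-matching involving the inverse map of $G$ and the left/right switch, a canonical $G$--equivariant isomorphism
\[
\Bbbk [X *_K G] \;\cong\; \operatorname{Ind}_K^G(\Bbbk [X]).
\]

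Second, the hypothesis that $G$ acts linearly reductively on $X *_K G$ translates, via Definition~\ref{defi:crucial2}, to the linear reductivity of the action of $G$ on the rational $G$--module algebra $\operatorname{Ind}_K^G(\Bbbk [X])$. Together with the exactness hypothesis on $K$ in $G$, this places us exactly in the setting of Theorem~\ref{theo:espind} with $R = \Bbbk [X]$. Applying that theorem, we deduce that the action of $K$ on $\Bbbk [X]$ is linearly reductive, which by Definition~\ref{defi:crucial2} is precisely the linear reductivity of the action of $K$ on $X$.

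The only point requiring genuine care is the verification of the $G$--equivariant identification $\Bbbk [X *_K G] \cong \operatorname{Ind}_K^G(\Bbbk [X])$, which is a matter of tracking left/right conventions and the role of the inverse map of $G$; this is routine bookkeeping rather than a substantive obstacle. Once it is recorded, the theorem is an immediate corollary of Theorem~\ref{theo:espind}.
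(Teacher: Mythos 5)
Your proposal is correct and follows essentially the same route as the paper: the paper's own proof simply states that, since exactness gives the affineness of $X*_KG$, ``the rest of the assertion is just the geometric version of Theorem~\ref{theo:espind}.'' Your explicit identification $\Bbbk [X *_K G] \cong \operatorname{Ind}_K^G(\Bbbk [X])$ is exactly the translation the paper leaves implicit.
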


\begin{proof} Since $K$ is exact in $G$, $G/K$ is affine,
and using \cite[Lemma 3.16]{kn:oaag} it follows that $X*_KG$
is affine. The rest of the assertion is just the geometric
version of Theorem \ref{theo:espind}.
\end{proof}

From the above transitivity results (Theorem
\ref{theo:matsu},\eqref{item:matsu1}) we obtain the
following characterization of linearly reductive actions.

\begin{theo}\label{theo:radical} Let $G$ be an affine
  algebraic group and $\mathcal {R}_u(G)$ its unipotent
  radical. Assume that $X$ is an affine variety equipped
  with a regular action of $G$. Then the action of
  $G$ on $X$ is linearly reductive if and only if the action
  of $\mathcal {R}_u(G)$ on $X$ is linearly
  reductive. Similarly for the situation that $R$ is a
  rational $G$--module algebra.
\end{theo}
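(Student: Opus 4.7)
The plan is to apply Theorem~\ref{theo:matsu},\eqref{item:matsu2} to the closed normal subgroup $\mathcal {R}_u(G) \triangleleft G$. That result gives the biconditional
\[
(G,R) \text{ linearly reductive} \ \Longleftrightarrow\ \bigl(\mathcal {R}_u(G),R\bigr) \text{ LR}\ \text{and}\ \bigl(G/\mathcal {R}_u(G),\,{}^{\mathcal {R}_u(G)}\!R\bigr) \text{ LR},
\]
so the statement of the theorem is equivalent to saying that the second condition on the right-hand side is automatically satisfied in the present setting.

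For the ``only if'' direction no extra work is required: it is merely the extraction of the first condition in Theorem~\ref{theo:matsu},\eqref{item:matsu2}. (Equivalently, one can invoke Theorem~\ref{theo:matsu},\eqref{item:matsu1}, using that $G/\mathcal {R}_u(G)$, being reductive, is an affine variety, so that the action of $\mathcal {R}_u(G)$ on $G$ by right translations is linearly reductive by Theorem~\ref{theo:cps}.) The ``if'' direction is where I would concentrate effort. I would observe that, by the very definition of the unipotent radical, $G/\mathcal {R}_u(G)$ is a reductive affine algebraic group, hence --- in characteristic zero, as recalled in the Notations section --- linearly reductive as a group in the classical sense. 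Example~\ref{example:inicial}, (1) then applies: any rational action of a linearly reductive group on any rational module algebra is itself linearly reductive. In particular the action of $G/\mathcal {R}_u(G)$ on ${}^{\mathcal {R}_u(G)}\!R$ is linearly reductive, so Theorem~\ref{theo:matsu},\eqref{item:matsu2} delivers linear reductivity of $(G,R)$. The geometric version --- the case $R=\Bbbk [X]$ --- is an immediate specialization.

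The only delicate point, and essentially the main obstacle to a completely general proof, is the implicit use of the equivalence ``reductive $\Longleftrightarrow$ linearly reductive'' for $G/\mathcal {R}_u(G)$, which is valid in characteristic zero but fails in positive characteristic; in that setting one would need to substitute linear reductivity by geometric reductivity, in the spirit of the footnote that accompanies the discussion of Matsushima's criterion.
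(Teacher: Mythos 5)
Your proposal is correct and follows essentially the same route as the paper: the forward direction via Theorem~\ref{theo:matsu},\eqref{item:matsu1} (using that $G/\mathcal{R}_u(G)$ is affine), and the converse via Theorem~\ref{theo:matsu},\eqref{item:matsu2} together with the reductivity, hence linear reductivity, of $G/\mathcal{R}_u(G)$. Your explicit caveat about characteristic zero is consistent with the paper's own convention identifying reductive with linearly reductive in that setting.
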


\begin{proof} If the action of $G$ on $X$ is linearly
reductive, it follows from Theorem
\ref{theo:matsu}, \eqref{item:matsu1} that the
action of $\mathcal {R}_u(G)$ is also linearly
reductive. Conversely, in the case that $\mathcal {R}_u(G)$
acts in a linearly reductive way on $R$, as the quotient
group $G/\mathcal {R}_u(G)$ is reductive our conclusion
follows from Theorem \ref{theo:matsu}, \eqref{item:matsu2}.
\end{proof}

\section{Levi decomposition of a linearly reductive action}
\label{section:levidecomposition}

According the Theorem \ref{theo:radical}, the linear
reductivity of the action of the affine algebraic group $G$
on the affine variety $X$ is determined by the linear
reductivity of the action of the unipotent radical $\mathcal
{R}_u(G)$. In this case, we can complement the results of
Theorem \ref{theo:caraclrunip2}, and obtain a $\mathcal
{R}_u(G)$--decomposition of the variety $X$ that is similar
to the classical decomposition of an affine algebraic group
as a semi-direct product of the unipotent radical and a
closed reductive subgroup -- the Levi decomposition.

\begin{defi}\label{defi:levifactor} 
\begin{enumerate}
\item Let $G$ be an affine
algebraic group and $X$ be an affine $G$--variety. A
\emph{Levi decomposition} for the $G$--variety $X$ is an
$\mathcal {R}_u(G)$--isomorphism $X \cong \mathcal {R}_u(G)
\times L_G(X)$ where $L_G(X)$ is an affine variety endowed
with the trivial $\mathcal {R}_u(G)$--action.
 In this situation the variety $L_G(X)$ is called
a Levi factor and it is clear that the projection $X
\rightarrow L_G(X)$ is the geometric quotient of $X$ by the
$\mathcal R_u(G)$--action.
\item
Let $G$ be an affine algebraic group and $R$ a
$G$--module algebra. A Levi decomposition of $R$ is an isomorphism
of $\mathcal R_u(G)$--module algebras $R\cong
\Bbbk \bigl[\mathcal R_u(G)\bigr]\otimes L$, where $L$ is
an algebra with trivial $\mathcal R_u(G)$--action.
\end{enumerate}
\end{defi}
\begin{theo}\label{theo:radical2} 
\begin{enumerate}
\item Let $G$ be an affine
algebraic group and $X$ is an affine $G$--variety. Then the
action of $G$ on $X$ is linearly reductive if and only if
$X$ admits a Levi decomposition.
\item Let $G$ be an affine algebraic group and $R$ a
$G$--module algebra. Then $(G,R)$ is a linearly reductive
pair if and only if $R$ admits a Levi decomposition.
\end{enumerate}
\end{theo}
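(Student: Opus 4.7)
The two parts of the theorem are equivalent via the dictionary $R=\Bbbk[X]$, $L=\Bbbk[L_G(X)]$, using the natural identification $\Bbbk[\mathcal{R}_u(G)\times L_G(X)]\cong\Bbbk[\mathcal{R}_u(G)]\otimes\Bbbk[L_G(X)]$ of $\mathcal{R}_u(G)$-module algebras. So my plan is to prove (1) and obtain (2) as its algebraic translation. The proof is a direct packaging of two results already established in the paper: Theorem \ref{theo:radical} (which reduces linear reductivity of a $G$-action to linear reductivity of the $\mathcal{R}_u(G)$-action) and Theorem \ref{theo:caraclrunip2} (which gives the explicit ``trivial times unipotent'' structure for linearly reductive actions of unipotent groups).

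For the implication $(\Rightarrow)$ in (1), assume that $(G,X)$ is linearly reductive. Theorem \ref{theo:radical} gives that the action of $\mathcal{R}_u(G)$ on $X$ is linearly reductive. Since $\mathcal{R}_u(G)$ is unipotent, I can apply Theorem \ref{theo:caraclrunip2} directly: it produces a closed subvariety $L\subseteq X$ and an $\mathcal{R}_u(G)$-equivariant isomorphism $X\cong L\times\mathcal{R}_u(G)$ with the action trivial on $L$ and by right translation on $\mathcal{R}_u(G)$. Setting $L_G(X):=L$ is precisely a Levi decomposition of $X$ in the sense of Definition \ref{defi:levifactor}.

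For $(\Leftarrow)$, suppose a Levi decomposition $X\cong\mathcal{R}_u(G)\times L_G(X)$ is given as $\mathcal{R}_u(G)$-varieties with trivial action on $L_G(X)$. By Example \ref{example:inicial}, \eqref{item:hopf}, the right translation action of $\mathcal{R}_u(G)$ on itself is linearly reductive. Example \ref{example:inicial}, \eqref{item:factor} then shows that the product action of $\mathcal{R}_u(G)$ on $\mathcal{R}_u(G)\times L_G(X)$ is linearly reductive, hence so is the action on $X$. A final application of Theorem \ref{theo:radical} upgrades this to linear reductivity of the action of $G$ on $X$. The algebraic version (2) follows verbatim by replacing $X$, $L_G(X)$ and the geometric isomorphisms by $R$, $L$ and the corresponding $\mathcal{R}_u(G)$-module algebra isomorphisms; the algebraic form of Theorem \ref{theo:caraclrunip2} already recorded in Observation \ref{obse:comments} supplies exactly what is needed.

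There is no real obstacle here: the technical content lives entirely in Theorems \ref{theo:radical} and \ref{theo:caraclrunip2}. The only point worth noting is that Definition \ref{defi:levifactor} asks only for an $\mathcal{R}_u(G)$-equivariant (not a $G$-equivariant) isomorphism, so there is no need to lift the trivialization through the Levi quotient $G/\mathcal{R}_u(G)$ acting on $L_G(X)$, a step that would be much more delicate and in fact not generally possible at this level of generality.
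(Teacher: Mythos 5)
Your proposal is correct and follows essentially the same route as the paper: Theorem \ref{theo:radical} to pass between $G$ and $\mathcal{R}_u(G)$, and Theorem \ref{theo:caraclrunip2} (with Example \ref{example:inicial} and Observation \ref{obse:comments}) to translate linear reductivity of the unipotent radical's action into the product decomposition and back. The paper likewise dispatches part (2) by the same algebraic dictionary.
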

\begin{proof} (1) Assume that the action of $G$ on $X$ is
linearly reductive, then as $\mathcal {R}_u(G)$ is a normal
subgroup, it follows from previous transitivity results -- see Theorem \ref{theo:radical}-- that the action of $\mathcal
{R}_u(G)$ on $X$ is also linearly reductive. Then, the
existence of a Levi decomposition follows directly from
Theorem \ref{theo:caraclrunip2}.

Conversely, in the case there is a decomposition of $X$ as
in Definition \ref{defi:levifactor}, from the fact that the
action of $\mathcal {R}_u(G)$ is trivial on the Levi factor,
it follows that the action of this radical is linearly
reductive on $X$ (see Theorem \ref{theo:caraclrunip2}). It
follows by transitivity (see Theorem \ref{theo:radical}) that
the action of $G$ on $X$ is also linearly reductive.

\noindent
(2) We omit this part of the proof as it is standard --see
Observation \ref{obse:comments}--.
\end{proof}

\begin{obse}\label{obse:classicallevi} In the case that we
  consider an inclusion of the form: $\mathcal
  {R}_u(G)\triangleleft G$ and the action of the radical by
  translations of $G$, the multiplicative integral
  $\sigma:\Bbbk [\mathcal {R}_u(G)] \rightarrow \Bbbk [G]$ and the associated morphism $\Phi: G \rightarrow
  \mathcal {R}_u(G)$ satisfy the following properties:
\begin{enumerate}
\item \label{item:univariance}For all $x \in G$,\, $u \in
  \mathcal {R}_u(G)$, $f \in \Bbbk [\mathcal R_u(G)]$:
  $\sigma(u \cdot f)= u\cdot \sigma(f)$,\,
  $\Phi(xu)=\Phi(x)u$.
\item \label{item:conjugation}For all $x,y \in G$,\, $f \in
  \Bbbk [\mathcal R_u(G)]$:\, $\sigma(x\cdot f \cdot
  x^{-1})= x\cdot \sigma(f) \cdot x^{-1}$,\,
  $\Phi(xyx^{-1})=x \Phi(y) x^{-1}$.
\item \label{item:consequence}For all $x \in G$,\, $u \in
  \mathcal {R}_u(G)$, $f \in \Bbbk [\mathcal R_u(G)]$:
  $\sigma(f \cdot u)= \sigma(f) \cdot u $,\,
  $\Phi(ux)=u\Phi(x)$.
\end{enumerate}

The first property is just the $\mathcal
{R}_u(G)$--equivariance of the maps $\sigma$ and $\Phi$; the
second follows in a standard manner from the uniqueness of
the integral and the third is an easy consequence of the
second.

Call $L=\{\ell \in G: \Phi(\ell)=1\}$ and endow it with the
following product: $\ell \star t=\Phi(\ell
t)^{-1}\ell t $. It follows easily from the properties
\eqref{item:univariance},\eqref{item:conjugation},\eqref{item:consequence}
that the product thus defined is associative.

Define the
product on the set $\mathcal {R}_u(G) \times L$:
\begin{equation}\label{eqn:prodlevi}
(u,\ell)\cdot_\Phi(v,s):=(u(\ell v \ell^{-1})\Phi(\ell s),\ell \star s).
\end{equation}

A direct computation shows that the product defined above is
associative and that the map:
\begin{equation*}
  (u,\ell) \mapsto u\ell : ( \mathcal {R}_u(G) \times L,\cdot_\Phi) 
\rightarrow G,
\end{equation*} 
is a group isomorphism.

Suppose that the map $\Phi: G \rightarrow \mathcal {R}_u(G)$
satisfies the following cocycle condition:
\[ \Phi(xy)=\Phi(y) y^{-1}\Phi(x) y = \Phi(y) y.\Phi(x).\]
In this situation it is clear that in $L$ the product
$\star$ coincides with the product induced by $G$ --as
$\Phi(x)=\Phi(y)=1$ implies that $\Phi(xy)=1$--, and also
that the product $\cdot_\Phi$ coincides with the usual
semidirect product on $\mathcal {R}_u(G) \times L $.  It is
also clear that $\overline{\Phi}:G/\mathcal {R}_u(G) \to L$
is an isomorphism.  

The proof that in the case of
characteristic zero, the map $\Phi$ can be taken as to
satisfy the cocycle condition, is the main content of the
classical Levi decomposition --see
\cite{kn:nosotros} for a proof or \cite{kn:humphlevi,kn:mcninchlevi} for
the cohomological viewpoint and counterexamples (in positive
characteristic)--.


\end{obse}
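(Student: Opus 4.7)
The plan is to establish properties (1)--(3) of $\sigma$ and $\Phi$ first, and then to use these as the sole input for verifying associativity of $\star$ and $\cdot_\Phi$, identifying $\bigl(\mathcal R_u(G)\times L,\cdot_\Phi\bigr)$ with $G$, and finally specializing to the cocycle situation. Property (1) for $\sigma$ is built into the definition of a left integral, as recorded in Observation \ref{obse:intinj}; its translation to $\Phi$ uses the duality $\sigma(f)(x)=f(\Phi(x))$, giving
\[
f(\Phi(xu)) \;=\; \sigma(f)(xu) \;=\; (u\cdot\sigma(f))(x) \;=\; \sigma(u\cdot f)(x) \;=\; f(\Phi(x)u)
\]
for every $f$, whence $\Phi(xu)=\Phi(x)u$. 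For property (2) I would observe that for each $x\in G$ the map $f\mapsto x\cdot\sigma(x^{-1}\cdot f\cdot x)\cdot x^{-1}$ is again a multiplicative total integral for $\mathcal R_u(G)$ with values in $\Bbbk[G]$; this uses the normality of $\mathcal R_u(G)$, so that conjugation by $x$ is an algebraic automorphism of the group, together with the multiplicativity of $\sigma$. A uniqueness statement for the multiplicative integral --- coming from the explicit construction underlying Lemma \ref{theo:caraclrunip} --- then forces $\sigma(x\cdot f\cdot x^{-1})=x\cdot\sigma(f)\cdot x^{-1}$, and dualizing gives $\Phi(xyx^{-1})=x\Phi(y)x^{-1}$. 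Property (3) is a consequence of (1) and (2): $\Phi(ux)=\Phi\bigl((uxu^{-1})u\bigr)=\Phi(uxu^{-1})\,u=u\Phi(x)u^{-1}\,u=u\Phi(x)$.

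Next I would verify the group-theoretic claims using only (1)--(3). For associativity of $\star$, applying (3) with $u=\Phi(\ell t)^{-1}$ yields
\[
(\ell\star t)\star s \;=\; \Phi\bigl(\Phi(\ell t)^{-1}\ell t s\bigr)^{-1}\Phi(\ell t)^{-1}\ell t s \;=\; \Phi(\ell t s)^{-1}\ell t s,
\]
while the parallel computation for $\ell\star(t\star s)$ --- which first uses normality of $\mathcal R_u(G)$ to write $\ell\Phi(ts)^{-1}=\bigl(\ell\Phi(ts)^{-1}\ell^{-1}\bigr)\,\ell$ and then invokes (3) --- produces the same value. For $\cdot_\Phi$ the cleanest route is to verify that $\Psi\colon(u,\ell)\mapsto u\ell$ intertwines the two products:
\[
\Psi\bigl((u,\ell)\cdot_\Phi (v,s)\bigr) \;=\; u\ell v\ell^{-1}\Phi(\ell s)\cdot\Phi(\ell s)^{-1}\ell s \;=\; u\ell v s \;=\; \Psi(u,\ell)\,\Psi(v,s),
\]
which simultaneously proves associativity of $\cdot_\Phi$ (transferred from $G$) and shows $\Psi$ is a homomorphism. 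After the normalization $\Phi(1)=1$ --- which may be arranged since (1) and (3) force $\Phi(1)$ to be central in $\mathcal R_u(G)$ and can thus be absorbed into a left translation of $\sigma$ --- property (1) yields $\Phi|_{\mathcal R_u(G)}=\mathrm{id}$, so $\Psi^{-1}(x)=\bigl(\Phi(x),\Phi(x)^{-1}x\bigr)$ is well-defined in $\mathcal R_u(G)\times L$ by (3), proving that $\Psi$ is a bijection.

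Under the cocycle condition $\Phi(xy)=\Phi(y)\,y^{-1}\Phi(x)y$, for $\ell,t\in L$ one computes $\Phi(\ell t)=\Phi(t)\,t^{-1}\Phi(\ell)\,t=1$, so $L$ is closed under the product of $G$ and $\ell\star t=\ell t$; the same calculation shows $\Phi(\ell s)=1$ in the formula for $\cdot_\Phi$, reducing it to the ordinary semidirect product $(u,\ell)(v,s)=\bigl(u(\ell v\ell^{-1}),\ell s\bigr)$. Finally, the second coordinate of $\Psi^{-1}$, namely $x\mapsto\Phi(x)^{-1}x$, is then a group homomorphism $G\to L$ with kernel $\mathcal R_u(G)$ (as one checks directly using the cocycle identity), and therefore induces the isomorphism $\overline{\Phi}\colon G/\mathcal R_u(G)\to L$. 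The main obstacle in the whole argument is property (2): one must pin down a uniqueness statement for the multiplicative total integral strong enough to transport conjugation-invariance from the defining equivariance axiom; once that is in hand, all remaining assertions reduce to bookkeeping with (1)--(3) and associativity in $G$.
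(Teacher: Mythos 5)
Your proposal is correct and follows essentially the same route as the paper's (very terse) justification: property (1) from the $\mathcal R_u(G)$--equivariance of $\sigma$, property (2) by conjugating the integral and invoking uniqueness of the multiplicative total integral, property (3) from (1) and (2), and then direct bookkeeping with (1)--(3) and associativity in $G$ for the claims about $\star$, $\cdot_\Phi$ and $\Psi$. The one soft spot you flag --- that a bare multiplicative total integral is not unique (e.g.\ on a direct product $U\times L$ every morphism $c:L\to U$ gives one via $\Phi(u,\ell)=c(\ell)u$), so the conjugation-invariance in (2) must come from the specific construction of $\sigma$ rather than from uniqueness alone --- is precisely the point the paper also passes over with ``follows in a standard manner from the uniqueness of the integral,'' and your treatment is otherwise more detailed than the paper's (notably the normalization $\Phi(1)=1$ and the use of $\Psi$ as an intertwiner to transfer associativity).
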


\begin{obse} \label{obse:generalities} We recall some basic facts on
  extensions of actions. Let $G$ be an abstract group and
  $H\,\triangleleft\, G$ a normal subgroup.

  Consider an arbitrary set $L$, and the right action of $H$ on $H
  \times L$ given by the right translation on $H$ and the trivial
  action on $L$.  An extension of this action to an action of $G$ on
  $H \times L$ (that we write as $\bigl((r,\ell),g)\bigr) \mapsto
  (r,\ell) \cdot g : (H \times L) \times G \rightarrow H \times L$),
  is the same than: a) an action $\star: L \times G \rightarrow L$
  that is trivial on $L$ ($\ell \star r = \ell, r \in H, \ell \in L$); b) a cocycle $c: L \times G
  \rightarrow H$ satisfying the following conditions:
\begin{align}
  \label{eqn:zero1} c(\ell, gg')= c(\ell \star g, g')
  g'^{-1} c(\ell, g) g' \quad;\quad 
  c(\ell, h) = h.
\end{align} for all $\ell \in L,\, h \in H,\, g,g' \in G$.

Indeed, given $\star\,\,\text{and}\,\, c$ as above, the action can be defined as $(r,
\ell) \cdot g := (c(\ell, hg), \ell \star g)$ for $\ell \in
L, h \in H, g \in G$.  Conversely, if we write
$(h,\ell)\cdot g=(1,\ell)\cdot (hg)=(1,\ell)\cdot g \cdot (g^{-1}hg)$,
it is clear that if we define $(c(\ell, g),\ell \star
g):=(1,\ell)\cdot g$, we obtain our conclusion.

Moreover, if we consider equation \eqref{eqn:zero1} for the
case that $g=h \in H$, then
for all $h\in H\,,\, g \in G$ we have:
\begin{equation}\label{eqn:zero3} 
c(\ell, hg) = c(\ell, g) g^{-1}hg. 
\end{equation}
This equality implies that for 
$\ell \in L,\, h \in H,\, g,g' \in G$:
\begin{equation}
  \label{eqn:zero4} c\big(\ell \star g, g'\big) g'^{-1} c(\ell, g)
  g'=c\big(\ell \star g, c(\ell, g)g'\big).
\end{equation}
Then, the conditions given in equation \eqref{eqn:zero1} are also
equivalent to:
\begin{align}
\label{eqn:zero4} c(\ell,gg')= c(\ell \star &g,c(\ell, g)g')
\quad;\quad
  c(\ell, h) = h
\end{align} for all $\ell \in L,\, h \in H,\, g,g' \in G$.

\end{obse}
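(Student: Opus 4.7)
The plan is to parametrize extensions of the right $H$-action on $H \times L$ (translation on the first coordinate, trivial on the second) to a right $G$-action by extracting all new data from the values of the action at points $(1,\ell)$. Compatibility with the $H$-action forces $(r,\ell) = (1,\ell) \cdot r$ in the extended action, so that for every $g \in G$ we have $(r,\ell) \cdot g = (1,\ell) \cdot (rg)$. Writing $(1,\ell) \cdot g =: (c(\ell,g), \ell \star g)$ defines two functions $c: L \times G \to H$ and $\star: L \times G \to L$, and these determine the extended action completely.

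Next I translate the $G$-action axioms into identities for $c$ and $\star$. Taking $g = h \in H$ and using that the original $H$-action translates the first coordinate and fixes the second yields immediately $c(\ell,h) = h$ and $\ell \star h = \ell$, so $\star$ is trivial on $H$. The associativity $(1,\ell)\cdot (gg') = ((1,\ell)\cdot g) \cdot g'$, after rewriting the right-hand side as $(c(\ell,g), \ell \star g) \cdot g' = (1, \ell \star g) \cdot (c(\ell,g) g')$, produces simultaneously $\ell \star (gg') = (\ell \star g) \star (c(\ell,g) g') = (\ell \star g) \star g'$ (so $\star$ is genuinely a $G$-action on $L$) and the cocycle identity in the form \eqref{eqn:zero4}. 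Conversely, given a pair $(\star, c)$ satisfying these conditions, the prescription $(r,\ell) \cdot g := (c(\ell,rg), \ell \star g)$ defines a right $G$-action extending the given $H$-action, and this is verified by direct computation using the cocycle relation.

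To pass between the two formulations of the cocycle condition, I derive equation \eqref{eqn:zero3} as follows. Since $H$ is normal, $rg = g(g^{-1} r g)$, hence $(1,\ell)\cdot(rg) = ((1,\ell)\cdot g)\cdot(g^{-1} r g) = (c(\ell,g)\cdot g^{-1} r g,\, \ell \star g)$, using that $g^{-1}rg \in H$ acts by translation on the first coordinate. Comparing with $(1,\ell)\cdot(rg) = (c(\ell,rg), \ell \star (rg))$ gives $c(\ell,hg) = c(\ell,g)\, g^{-1} h g$ and $\ell \star (hg) = \ell \star g$ for $h \in H$. Specializing this with $h = c(\ell,g)$ and $g$ replaced by $g'$ transforms $c(\ell \star g, c(\ell,g) g')$ into $c(\ell \star g, g')\, g'^{-1} c(\ell,g) g'$, producing the equivalence between \eqref{eqn:zero1} and \eqref{eqn:zero4}.

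The main obstacle is not conceptual but notational bookkeeping: keeping straight the order of left/right multiplications and the twisted interaction between conjugation by $g$ and the action $\star$ on $L$. Once equation \eqref{eqn:zero3} is in hand, the interchange between the two cocycle forms is a purely algebraic manipulation, and the bijectivity of the correspondence between extended actions and pairs $(\star,c)$ follows because the passage $(\,\cdot\,) \mapsto (\star,c)$ and its inverse are manifestly mutual inverses from their definitions.
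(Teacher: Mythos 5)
Your proposal is correct and follows essentially the same route as the paper: you extract the pair $(\star,c)$ from the values $(1,\ell)\cdot g$, use normality of $H$ to obtain $c(\ell,hg)=c(\ell,g)\,g^{-1}hg$, and pass between the two forms of the cocycle identity by the substitution $h=c(\ell,g)$. The only cosmetic difference is that you derive equation \eqref{eqn:zero3} directly from the action via $hg=g(g^{-1}hg)$ and obtain the form \eqref{eqn:zero4} first from associativity, whereas the paper states \eqref{eqn:zero1} first and specializes $g=h$ to get \eqref{eqn:zero3}; these are equivalent manipulations.
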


From the above considerations we deduce the following
explicit description of the linearly reductive actions of an
affine algebraic group $G$ on the affine variety $X$.

\begin{theo} \label{theo:unipot_description} Let $G$ be an
affine algebraic group and $X$ is an affine
$G$--variety. Then the action of $G$ on $X$ is linearly
reductive if and only if $X$ is $G$--equivariantly
isomorphic with a variety of the form $\mathcal
{R}_u(G) \times L$ where $L$ is an affine variety. The action of $G$
on $\mathcal {R}_u(G) \times L$ is given in terms of a pair
$(\star,c)$, where $\star: L \times G
\rightarrow L$ is a regular right action, trivial when
restricted to $\mathcal{R}_u(G)$, and $c: L
\times G \rightarrow \mathcal {R}_u(G)$ is a morphism of
varieties satisfying the following cocycle condition:
\begin{equation} \label{eqn:zero4} c(\ell, gg') = c(\ell \star g, g') g'^{-1} c(\ell, g)
  g' = c(\ell \star g, c(\ell ,g)
  g') \,\,;\,\, c(\ell, r)
  = r
\end{equation} where $\ell \in L$, $g,g' \in G$ and $r \in
\mathcal {R}_u(G)$.

In this situation the action of $G$ on $\mathcal
{R}_u(G) \times L$ is given by the formula
   \begin{align} \label{eqn:zero6} (r, \ell) \cdot g \,=
\bigl(c(\ell, rg), \ell\star g\bigr) =
\bigl(c(\ell, g)g^{-1}rg, \ell \star g \bigr),
\end{align} where $r \in \mathcal {R}_u(G)$, $\ell \in L$
and $g \in G$
\end{theo}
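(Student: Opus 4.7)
The plan is to combine Theorem \ref{theo:radical2} --- which already provides the Levi decomposition $X \cong \mathcal{R}_u(G) \times L$ as $\mathcal{R}_u(G)$-varieties whenever the $G$-action on $X$ is linearly reductive --- with the purely set-theoretic machinery of Observation \ref{obse:generalities}, which classifies extensions of an $H$-action to a $G$-action along a normal inclusion $H \triangleleft G$. With $H = \mathcal{R}_u(G)$, the substance of the argument is to check that the cocycle/action pair that appears at the abstract level of the observation is actually given by morphisms of algebraic varieties.

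For the forward implication, assume the $G$-action on $X$ is linearly reductive. Theorem \ref{theo:radical} ensures that the $\mathcal{R}_u(G)$-action on $X$ is also linearly reductive, and Theorem \ref{theo:radical2}(1) then supplies an $\mathcal{R}_u(G)$-equivariant isomorphism $\Theta: X \to \mathcal{R}_u(G) \times L$ in which $\mathcal{R}_u(G)$ acts by right translation on itself and trivially on $L$. Transport the full $G$-action to $\mathcal{R}_u(G) \times L$ via $\Theta$. Because $\mathcal{R}_u(G)$ is normal in $G$ and the transported action restricts to the standard one, Observation \ref{obse:generalities} identifies this extension with a pair $(\star, c)$ defined by $(1,\ell) \cdot g = (c(\ell,g), \ell \star g)$. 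Both $\star$ and $c$ are compositions of the regular $G$-action on $X$ with the regular isomorphism $\Theta$ and the coordinate projections, hence they are morphisms of varieties. The cocycle conditions and the explicit action formula \eqref{eqn:zero6} are precisely those derived in Observation \ref{obse:generalities}; only their algebraic-geometric realization has to be verified.

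For the converse, suppose $X = \mathcal{R}_u(G) \times L$ carries a $G$-action of the prescribed form. Specializing $(r,\ell) \cdot g = (c(\ell,rg), \ell \star g)$ to $g = s \in \mathcal{R}_u(G)$ and using $c(\ell, rs) = rs$ together with $\ell \star s = \ell$ shows that the induced $\mathcal{R}_u(G)$-action is exactly right translation on the first factor and trivial on the second. Consequently $X$ admits a Levi decomposition as an $\mathcal{R}_u(G)$-variety, so by Theorem \ref{theo:caraclrunip2} (equivalently Theorem \ref{theo:radical2}(1)) the $\mathcal{R}_u(G)$-action is linearly reductive, and Theorem \ref{theo:radical} lifts this to the linear reductivity of the full $G$-action.

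The only genuine obstacle is bookkeeping: checking the regularity of $\star$ and $c$ and matching the abstract formulas of Observation \ref{obse:generalities} with the algebraic-geometric context. Since the cocycle identity in its two equivalent forms is already derived in that observation, once regularity is in place the remaining assertions follow essentially by substitution.
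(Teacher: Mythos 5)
Your proposal is correct and follows exactly the paper's route: the paper's own proof is the one-line remark that the theorem ``follows directly from Theorem \ref{theo:radical2} and Observation \ref{obse:generalities},'' and you have simply spelled out that combination (Levi decomposition via Theorems \ref{theo:radical} and \ref{theo:radical2}, cocycle description of the extended action via Observation \ref{obse:generalities}, plus the regularity bookkeeping). No gaps; your elaboration is a faithful expansion of the intended argument.
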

\begin{proof} The proof follows directly from Theorem
\ref{theo:radical2} and Observation \ref{obse:generalities}.
\end{proof} 
  
  
Hence, in the above perspective, a linearly reductive action
of a group $G$ on an affine variety $X$, is built from two
pieces, as in equation \eqref{eqn:zero6}. One is a classical
GIT action $\star$ of a {\em reductive} group $G/\mathcal
{R}_u(G)$ on an affine variety $L$ and the other is a rather
more intricate map defined in terms of a polynomial cocycle 
$c: L \times G \rightarrow \mathcal {R}_u(G)$
compatible with $\star$ in the sense of equation
\eqref{eqn:zero4}.

\section{Invariants and quotients}
\label{section:invquo}
Even though a group that has a lineraly reductive action on an affine
variety $X$ need not have a trivial unipotent radical, as we have
obtained a good control on the linearly reductive actions of the
unipotent groups (Theorem \ref{theo:caraclrunip2}) most of the
``classical'' results on invariants and quotients of reductive groups
can be recuperated in the relative context.

\begin{theo} \label{theo:fginvariants} Assume that $R$ is a $\Bbbk
  $--finitely generated rational commutative $G$--module algebra and
  that the action of $G$ on $R$ is linearly reductive. Let $S$ be an
  algebra object in $\modGR$ that is finitely generated over $S$, then
  ${}^G\!S$ is finitely generated over $\Bbbk $.
 \end{theo}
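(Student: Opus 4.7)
The plan is to reduce the problem to the classical finiteness theorem for reductive groups, by separating out the contribution of the unipotent radical and exploiting the multiplicativity of the Reynolds operator in the unipotent case.

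First I would observe that $S$ carries a structure of rational $G$--module algebra for which the structure map $R\to S$ is $G$--equivariant, and that by Example \ref{example:inicial}, \eqref{item:goingup} the action of $G$ on $S$ is again linearly reductive. Since $R$ is finitely generated over $\Bbbk$ and $S$ is finitely generated over $R$, the algebra $S$ is finitely generated over $\Bbbk$. I would then set $U:=\mathcal{R}_u(G)$ and invoke Theorem \ref{theo:radical} to conclude that the action of $U$ on $S$ is linearly reductive as well.

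The key ingredient is now Lemma \ref{theo:caraclrunip}, \eqref{item:multirey2}: because $U$ is unipotent and acts linearly reductively on $S$, the Reynolds operator $p_S\colon S\to {}^U\!S$ is a \emph{surjective algebra homomorphism}. Consequently ${}^U\!S$ is a finitely generated $\Bbbk$--algebra, being the image under a surjective algebra morphism of a finitely generated $\Bbbk$--algebra. At this point I would apply Theorem \ref{theo:matsu}, \eqref{item:matsu2}: since $U\triangleleft G$ and $(G,S)$ is linearly reductive, the pair $(G/U,{}^U\!S)$ is linearly reductive, and the quotient $G/U$ is a reductive group in the classical sense.

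To conclude, I would invoke the classical Hilbert--Nagata finiteness theorem for a (classically) reductive group acting rationally on a finitely generated commutative $\Bbbk$--algebra; this yields that ${}^{G/U}\bigl({}^U\!S\bigr)$ is finitely generated over $\Bbbk$. Since ${}^{G/U}\bigl({}^U\!S\bigr)={}^G\!S$, we are done. The main obstacle, and the only step where the relative setting really matters, is the use of Lemma \ref{theo:caraclrunip}, \eqref{item:multirey2}: without the multiplicativity of $p_S$ on $S$, there would be no reason for ${}^U\!S$ to inherit finite generation from $S$, and the whole reduction to the classically reductive quotient $G/U$ would break down.
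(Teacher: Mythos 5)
Your proposal is correct and follows essentially the same route as the paper's own proof: reduce to the unipotent radical $U=\mathcal{R}_u(G)$ via the transitivity results, use the multiplicative surjective Reynolds operator of Lemma \ref{theo:caraclrunip}, \eqref{item:multirey2} to get finite generation of the $U$--invariants, and finish with the classical finiteness theorem for the reductive quotient $G/U$ acting on ${}^U\!S={}^{G/U}\text{-algebra}$. The only cosmetic difference is that the paper first replaces $S$ by $R$ (``we may assume $S=R$'') while you carry $S$ along throughout, and you additionally record that $(G/U,{}^U\!S)$ is linearly reductive via Theorem \ref{theo:matsu}, \eqref{item:matsu2} before invoking the classical result.
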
 
 \begin{proof} We may assume that $S=R$ because in the above context,
   the action of $G$ on $S$ is linearly reductive --see Example
   \ref{example:inicial} ,\eqref{item:goingup}-- and clearly $S$ is a
   finitely generated $\Bbbk $--algebra. It follows from the
   generalized Matsushima criterion (c.f. Theorem \ref{theo:matsu}),
   that the action of $U= \mathcal R_u(G)$ on $R$ is linearly
   reductive.  Then, using the multiplicative surjective morphism
   $p_R: R \rightarrow {}^U\!R$ --see Lemma \ref{theo:caraclrunip},
   \eqref{item:multirey2}-- we conclude that the algebra of invariants
   ${}^U\!R$ is finitely generated over $\Bbbk $. Now, as ${}^G\!R =
   {}^{G/U}({}^U\!R)$, using the classical results about invariants of
   reductive groups, we conclude what we want.
\end{proof}

\begin{theo} \label{theo:lrideals}Assume that $R$ is a rational
  commutative $G$--module algebra, that the action of $G$ on $R$ is
  linearly reductive and let $S$ be an algebra object in $\modGR$.
\begin{enumerate}
\item If $I \subseteq {}^G\!S$ is an ideal, then $I= IS \cap  {}^G\!S$;
\item If $J_1,J_2 \subset S$ are $G$--stable ideals, then:
\[ J_1 \cap  {}^G\!S + J_2 \cap  {}^G\!S = (J_1 + J_2) \cap  {}^G\!S;\] 
\item If $J_1 + J_2 = S$,  then $J_1 \cap  {}^G\!S + J_2 \cap  {}^G\!S = 
{}^G\!S$. 
\end{enumerate}
 \end{theo}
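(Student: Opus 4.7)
\medskip
\noindent\textbf{Proof proposal.} The plan is to reduce everything to the single principle that, under our hypotheses, applying the $G$--fixed point functor preserves surjections of $(G,R)$--modules. For this I first want to replace $R$ by $S$: since $S$ is an algebra object in $\modGR$, the structure map $r\mapsto r\cdot 1_S$ is a morphism of rational $G$--module algebras $R\to S$, so Example \ref{example:inicial}, \eqref{item:goingup} yields that the action of $G$ on $S$ is linearly reductive. Hence every surjection in $\modGS$ (equivalently, every surjection in $\modGR$ between $S$--modules) induces a surjection on $G$--invariants, by Theorem \ref{theo:characterizations}.

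For (1), the inclusion $I\subseteq IS\cap {}^G\!S$ is trivial since $I\subseteq {}^G\!S$ and $I\subseteq IS$. For the converse, take $f\in IS\cap {}^G\!S$ and write $f=\sum_{i=1}^n a_i s_i$ with $a_i\in I$ and $s_i\in S$. Consider the morphism of $(G,S)$--modules
\[
\varphi\colon S^{\,n}\longrightarrow IS,\qquad \varphi(t_1,\dots,t_n)=\sum_{i=1}^n a_i t_i,
\]
which is $G$--equivariant because $a_i\in {}^G\!S$, and $S$--linear by construction; its image contains each $a_i$, so it surjects onto $IS$. By linear reductivity of $(G,S)$ there exist $t_i'\in {}^G\!S$ with $\varphi(t_1',\dots,t_n')=f$, so $f=\sum a_i t_i'\in I\cdot {}^G\!S\subseteq I$.

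For (2), the inclusion $J_1\cap{}^G\!S + J_2\cap{}^G\!S\subseteq (J_1+J_2)\cap{}^G\!S$ is obvious. For the reverse, use the surjection of $(G,S)$--modules
\[
\psi\colon J_1\oplus J_2\longrightarrow J_1+J_2,\qquad \psi(j_1,j_2)=j_1+j_2,
\]
which is clearly a morphism in $\modGS$ (the $G$--stability of $J_1,J_2$ makes $J_1\oplus J_2$ an object of $\modGS$). Linear reductivity then gives ${}^G(J_1\oplus J_2)\twoheadrightarrow {}^G(J_1+J_2)$; noting that ${}^G(J_1\oplus J_2)={}^G\!J_1\oplus{}^G\!J_2=(J_1\cap{}^G\!S)\oplus(J_2\cap{}^G\!S)$ yields the claim. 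Part (3) is the special case $J_1+J_2=S$ of (2), since then $(J_1+J_2)\cap{}^G\!S={}^G\!S$.

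I do not expect a serious obstacle: the only point that requires a moment of care is verifying that the maps $\varphi$ and $\psi$ really live in $\modGS$ (in particular, that $\varphi$ is $G$--equivariant, which uses crucially that the coefficients $a_i$ are $G$--invariant), and that the linear reductivity of $(G,S)$ is available. Everything else is a direct application of Theorem \ref{theo:characterizations}(2).
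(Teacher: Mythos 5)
Your argument is correct, and it runs on the same engine as the paper's proof --- namely Theorem \ref{theo:characterizations}(2), that the $G$--fixed point functor carries surjections in $\modGR$ to surjections --- but the concrete mechanics differ in both parts. For (1) the paper does not build a presentation: it applies the Reynolds operator $p_S$ directly to $f=\sum a_is_i$ and uses the Reynolds identity $p_S(s_0s)=s_0\,p_S(s)$ for $s_0\in{}^G\!S$ (Observation \ref{obse:propertiesreynolds}) to get $f=p_S(f)=\sum a_i\,p_S(s_i)\in I\cdot{}^G\!S=I$ in one line. Your version trades the Reynolds identity for a lifting argument; it works, but note one small imprecision: the image of your $\varphi$ is the ideal generated by $a_1,\dots,a_n$, which need not be all of $IS$ when $I$ is not finitely generated. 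This is harmless --- $f$ lies in $\varphi(S^{\,n})\cap{}^G\!S={}^G\bigl(\varphi(S^{\,n})\bigr)$, so just take the codomain to be the image --- but the sentence ``so it surjects onto $IS$'' should be corrected. For (2) the paper lifts the classes $s_i+J_1\cap J_2$ through the surjections $J_i\to J_i/(J_1\cap J_2)$ and then adjusts by an invariant element of $J_1\cap J_2$; your surjection $J_1\oplus J_2\to J_1+J_2$ combined with ${}^G(J_1\oplus J_2)={}^G\!J_1\oplus{}^G\!J_2$ reaches the same conclusion more directly and is, if anything, cleaner. Finally, the preliminary reduction to the linear reductivity of the pair $(G,S)$ via Example \ref{example:inicial}, \eqref{item:goingup} is legitimate (it is exactly the move made in the proof of Theorem \ref{theo:fginvariants}), though it is not strictly necessary here: all the modules you use are already objects of $\modGR$ by restriction of scalars along $R\to S$, so linear reductivity of $(G,R)$ suffices.
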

\begin{proof}
\begin{enumerate}
\item Clearly $I \subseteq IS \cap {}^G\!S$. If we take
  $s=\sum_t i_ts_t \in IS \cap {}^G\!S$ with $i_t \in I, s_t
  \in S$ and apply the Reynolds operator $p_S$ we obtain
  that $s=p_S(s)=\sum i_tp_S(s_t) \in I{}^G\!S=I$.
\item It is clear that $J_1 \cap {}^G\!S + J_2 \cap {}^G\!S
  \subseteq (J_1 + J_2) \cap {}^G\!S$. If we take an element
  $s_1 + s_2 \in (J_1 + J_2) \cap {}^G\!S$, and consider
  $s_1 + J_1 \cap J_2 \in J_1/(J_1 \cap J_2)$ and $s_2 + J_2
  \cap J_2 \in J_2/(J_1 \cap J_2)$, from the equality
  $x\cdot (s_1 + s_2)=(s_1 + s_2)$ we deduce that $x\cdot
  s_1 - s_1 = s_2 - x\cdot s_2 \in J_1 \cap J_2$.  In other
  words, we have that $s_1 + J_1 \cap J_2 \in
  {}^G\!(J_1/(J_1 \cap J_2))$ and $s_2 + J_1 \cap J_2 \in
  {}^G\!(J_2/(J_1 \cap J_2))$. Using the linear reductivity
  of the action we find $\ell_1 \in {}^G\!J_1$ and $\ell_2
  \in {}^G\!J_2$ such that $\ell_1 - s_1 \in J_1 \cap J_2$
  and $\ell_2 - s_2 \in J_1 \cap J_2$. Then we write $s_1 +
  s_2 = \ell_1 + \ell_2 + t$ with $t \in {}^G\!(J_1 \cap
  J_2)$. Hence, $s_1 + s_2 \in {}^G\!J_1 + {}^G\!J_2$.
\item This condition follows immediately from the previous
  one.
\end{enumerate}
\end{proof}

The semi--geometric quotient (in particular the categorical quotient)
of a linearly reductive action exists. 

Once that Theorems \ref{theo:fginvariants} and \ref{theo:lrideals} are
established, the proof goes along the same lines than the standard
proof for actions of reductive groups on affine varieties and for that
reason we omit it (see for example \cite{kn:nosotros}[Theorem 13.2.4]
or \cite{kn:News}[Theorem 3.4]).

\begin{theo}\label{theo:liquotiens} Assume  that the regular action of an  affine
  algebraic group $G$  on the affine variety $X$ is linearly
  reductive. Consider the affine variety $Y$ having as algebra of 
  polinomial functions ${}^G\,\Bbbk [X]$ and call $\pi: X
  \rightarrow Y$ the associated morphism. Then, the pair
  $(Y,\pi)$ is a semi--geometric quotient of $X$ by $G$.\hfill\qed
\end{theo}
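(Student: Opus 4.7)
The plan is to verify the standard axioms of a semi--geometric quotient---that $\pi$ is surjective, sends disjoint $G$--invariant closed sets to disjoint closed sets, maps invariant closed sets to closed sets, and satisfies the sheaf identity $\Bbbk[U]=\Bbbk[\pi^{-1}(U)]^G$---by direct reduction to Theorems \ref{theo:fginvariants} and \ref{theo:lrideals}, exactly as in the classical reductive case. The starting observation is that Theorem \ref{theo:fginvariants} makes $Y$ a bona fide affine variety, and that the comorphism of $\pi$ is the inclusion ${}^G\!\Bbbk[X]\hookrightarrow\Bbbk[X]$, so $\pi$ is automatically $G$--invariant.

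For surjectivity I would pick a maximal ideal $\mathfrak m\subseteq{}^G\!\Bbbk[X]$ corresponding to $y\in Y$ and show that $\mathfrak m\,\Bbbk[X]\neq\Bbbk[X]$: otherwise Theorem \ref{theo:lrideals}(1) would give $\mathfrak m=\mathfrak m\,\Bbbk[X]\cap{}^G\!\Bbbk[X]={}^G\!\Bbbk[X]$, contradicting maximality. Any maximal ideal of $\Bbbk[X]$ above $\mathfrak m\,\Bbbk[X]$ then yields a point $x\in X$ with $\pi(x)=y$.

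For separation and closedness of images the lever is Theorem \ref{theo:lrideals}(1) and (3). Given disjoint $G$--invariant closed subvarieties $Z_1,Z_2\subseteq X$ with $G$--stable vanishing ideals $J_1,J_2$ satisfying $J_1+J_2=\Bbbk[X]$, Theorem \ref{theo:lrideals}(3) supplies $h_i\in J_i\cap{}^G\!\Bbbk[X]$ with $h_1+h_2=1$; then $h_1$ vanishes on $\pi(Z_1)$ and equals $1$ on $\pi(Z_2)$, so the images are already disjoint. Closedness of a single $\pi(Z)$ with $Z=V(J)$ reduces to showing that every maximal $\mathfrak m\supseteq J\cap{}^G\!\Bbbk[X]$ arises as $\pi(x)$ for some $x\in Z$: if instead $\mathfrak m\,\Bbbk[X]+J=\Bbbk[X]$, another application of Theorem \ref{theo:lrideals}(3) combined with part (1) would produce the contradiction $\mathfrak m+J\cap{}^G\!\Bbbk[X]={}^G\!\Bbbk[X]$.

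For the sheaf identity it suffices, by a standard covering argument, to treat principal open sets $U=Y_f$ with $f\in{}^G\!\Bbbk[X]$; then $\pi^{-1}(U)=X_f$, and the required equality ${}^G(\Bbbk[X]_f)=({}^G\!\Bbbk[X])_f$ falls out of a one--line computation with the Reynolds operator $p_{\Bbbk[X]}$, since $f^n$ is $G$--fixed and therefore commutes with $p_{\Bbbk[X]}$. The main obstacle has in fact already been cleared in Theorems \ref{theo:fginvariants} and \ref{theo:lrideals}: once finite generation and the ideal--theoretic identities are in hand, the remaining work is a mechanical translation of the classical reductive--quotient argument, which is precisely why the authors omit the details here.
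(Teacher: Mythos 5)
Your proposal is correct and follows exactly the route the paper intends: the paper itself omits the proof, remarking only that once Theorems \ref{theo:fginvariants} and \ref{theo:lrideals} are in place the classical reductive--quotient argument goes through verbatim, and your write--up is precisely that argument (surjectivity and separation via Theorem \ref{theo:lrideals}, the sheaf condition via the Reynolds identity on localizations). No gaps; you have simply supplied the details the authors chose to leave to the reader.
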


Hence, we have shown that if we have a linearly reductive
action of a group $G$ on an affine variety $X$, to construct
the quotient variety we can first take the quotient $X/U$,
being $U$ the unipotent radical of $G$ -- this quotient will
be affine because of Theorem \ref{theo:caraclrunip2} --; and then 
use the classical results for quotiens of affine varieties by
reductive groups in order to  obtain $X/G=\dfrac{X/U}{G/U}$.

\section{Unipotent actions}\label{section:unipotent}

Assume that $G$ is an affine algebraic group and $R$ a left
rational $G$--module algebra. In this section we define the
concept of \emph{unipotent action} in terms of the fixed
point functor: $M \mapsto {}^G\!M: \modGR \rightarrow
{}_{{}^G\!R} \mathcal M$.
Along this section we  use some of the results and definitions
recalled at the beginning of Section \ref{section:grsubgr}, particularly 
the ones in Observation \ref{obse:obsefirstprop}.

\begin{defi}
\label{defi:unipotentaction} Let $G$ and $R$ be as above.  We
say that \emph{the action of $G$ on $R$ is unipotent}, or
that \emph{the pair $(G,R)$ is unipotent}, if for all $0 \neq M \in
\modGR$, then ${}^G\!M \neq 0.$

In the case that $R=\Bbbk [X]$ for some affine variety $X$ acted
regularly by $G$ on the right, we say that the \emph{action of $G$ on
  $X$ is unipotent}, or that \emph{the pair $(G,X)$ is unipotent}, if
and only if the action of $G$ on $\Bbbk [X]$ is unipotent.
\end{defi}

It is clear that in the case that $X=\{\star\}$ or that $R=\Bbbk$, if
the action of $G$ on any of these is unipotent, then $G$ is a
unipotent group.  

Also, the action of $G$ on itself by left translations is
unipotent. Indeed, the objects $M \in {}_{(G,\Bbbk [G])} \mathcal M$
have the form $M = \Bbbk [G] \otimes {}^G\!M$ --see Example
\ref{example:inicial}--, and this implies that $M \neq 0$ if and only
if ${}^G\!M = \neq 0$ and hence the pair $(G,G)$ is unipotent.

\begin{lema}\label{lema:changeofbasis} Let $G$ be an affine
  algebraic group and $\varphi:R \rightarrow S$ a
  homomorphism of rational $G$--module algebras. Suppose the
  action of $G$ on $R$ is unipotent, then the action of $G$
  on $S$ is unipotent. Similarly, if $f:X \rightarrow Y$ is
  an equivariant $G$--morphism of affine varieties and the
  action of $G$ on $Y$ is unipotent, so is the action of $G$
  on $X$.
\end{lema}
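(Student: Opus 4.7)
The plan is to argue by restriction of scalars along $\varphi$. Given an arbitrary nonzero $N \in \modGS$, I would use $\varphi$ to endow $N$ with the structure of an object of $\modGR$ by defining $r \cdot n := \varphi(r)\cdot n$ for $r \in R$ and $n \in N$; the $G$--module structure on $N$ is left unchanged. One verifies immediately that this yields a $(G,R)$--module, since $\varphi$ is a morphism of $G$--module algebras (so the compatibility $x\cdot(rn) = (x\cdot r)(x\cdot n)$ follows from the analogous identity for $S$ together with $\varphi$ being $G$--equivariant).

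The key observation is that the underlying set of $G$--fixed elements is unchanged under this change of scalars: ${}^G\!N$ computed in $\modGR$ coincides, as a set, with ${}^G\!N$ computed in $\modGS$, because both are just the subset of $N$ fixed pointwise by $G$. Since $N \neq 0$ as a $(G,S)$--module, it is nonzero as a $(G,R)$--module, and the unipotency hypothesis on the action of $G$ on $R$ yields ${}^G\!N \neq 0$. Hence ${}^G\!N \neq 0$ in $\modGS$ as well, proving that the action of $G$ on $S$ is unipotent.

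For the geometric half, if $f: X \to Y$ is an equivariant morphism of affine $G$--varieties, then the comorphism $f^{\sharp}:\Bbbk[Y] \to \Bbbk[X]$ is a homomorphism of rational $G$--module algebras. Applying the algebraic statement with $R = \Bbbk[Y]$ and $S = \Bbbk[X]$ yields immediately that the action of $G$ on $\Bbbk[X]$ is unipotent, i.e.\ the action of $G$ on $X$ is unipotent.

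There is essentially no obstacle here; the only thing to check carefully is that restriction of scalars preserves the $(G,\,\cdot\,)$--module axioms and that the forgetful functor $\modGS \to \modGR$ commutes with the $G$--fixed point functor, both of which are routine.
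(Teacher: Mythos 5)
Your proof is correct and is essentially identical to the paper's own argument: both restrict scalars along $\varphi$ to view a nonzero $(G,S)$--module as a nonzero $(G,R)$--module, observe that the $G$--fixed part is unchanged, and apply the unipotency hypothesis for $(G,R)$; the geometric statement then follows by passing to comorphisms. No gaps.
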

\begin{proof} Assume that $M \in {}_{(G,S)}\mathcal M$ is a non zero
object. We may change scalars and view it as an object in
$\modGR$ that -- by hypothesis -- has non zero fixed
part. Then, by definition the action of $G$ on $S$ is
unipotent.
\end{proof}


We have the following
transitivity result that yields in the particular case of a group and
a subgroup, the equivalence of the concept of observability with that
of unipotent action.  

\begin{theo}\label{theo:changeofgroup} Assume that $K \subseteq G$ is
  a closed inclusion of affine algebraic groups. 
\begin{enumerate} 
\item Let $R$ be a rational $G$--module algebra and assume that $K$ is
  observable in $G$. If
  the action of $G$ on $R$ is unipotent, the same is true for the
  action of $K$ on $R$. Similarly if the action of $G$ on an affine
  variety $X$ is unipotent, so is the action of $K$.
\item   The action of $K$ on $G$ is
  unipotent if and only if $K$ is observable in $G$.  

\end{enumerate}
\end{theo}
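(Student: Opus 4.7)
The plan is to build an $R$-enriched induction functor $\operatorname{Ind}_K^G\colon \modKR \to \modGR$ refining the one recalled in Section \ref{section:grsubgr}, use it to prove part (1) by transferring unipotence of $(G,R)$ down to $(K,R)$, and then deduce part (2) from it (the geometric assertion in (1) being the specialization $R=\Bbbk[X]$).

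For part (1), I would take $M\in\modKR$ non-zero and identify $\Bbbk[G]\otimes M$ with regular $M$-valued functions $F\colon G\to M$ via $f\otimes m\mapsto(g\mapsto f(g)m)$. I equip it with the diagonal $K$-action $(y\cdot F)(g)=y\cdot F(gy)$, the $G$-action by left translation $(g'\star F)(g)=F(g'^{-1}g)$, and the $R$-action $(r\cdot F)(g)=(g^{-1}\cdot r)\cdot F(g)$; the last formula relies on the fact that $R$ is a $G$-module algebra. A few routine computations --- using only the compatibility $y\cdot(r\cdot m)=(y\cdot r)(y\cdot m)$ in $\modKR$ and the commutation of left and right translations on $\Bbbk[G]$ --- show that the $G$- and $R$-actions both preserve $K$-invariants and together make $\operatorname{Ind}_K^G(M):={}^K(\Bbbk[G]\otimes M)$ into an object of $\modGR$. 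Moreover an element of $\operatorname{Ind}_K^G(M)$ is $G$-fixed iff the corresponding function is constant, and that constant value must lie in ${}^K\!M$; this gives the identification ${}^G\operatorname{Ind}_K^G(M)={}^K\!M$, generalizing item (1) of Observation \ref{obse:firstproperties}.

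Everything then falls into place. Observability of $K$ in $G$, in the form of condition (4) of Observation \ref{obse:obsefirstprop}, ensures $\operatorname{Ind}_K^G(M)\neq 0$ whenever $M\neq 0$; unipotence of $(G,R)$ then forces ${}^G\operatorname{Ind}_K^G(M)\neq 0$; and this object equals ${}^K\!M$, so $(K,R)$ is unipotent. For part (2), the forward implication is immediate from condition (5) of Observation \ref{obse:obsefirstprop}: any non-zero $K$-stable ideal $I\subseteq\Bbbk[G]$ is a non-zero object of ${}_{(K,\Bbbk[G])}\mathcal M$, so unipotence of $(K,G)$ yields ${}^K\!I\neq 0$. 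For the converse I would apply part (1) with $R=\Bbbk[G]$ and the right regular action, using that $(G,\Bbbk[G])$ is unipotent --- the fundamental theorem of Hopf modules (Example \ref{example:inicial}\,(5)) identifies every $N\in{}_{(G,\Bbbk[G])}\mathcal M$ with $\Bbbk[G]\otimes{}^G\!N$, so $N\neq 0$ forces ${}^G\!N\neq 0$.

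The main obstacle I expect is the verification of the enriched structure: checking that the formula $(r\cdot F)(g)=(g^{-1}\cdot r)\cdot F(g)$ really does preserve $K$-invariants and satisfies the $(G,R)$-compatibility $g'\star(r\cdot F)=(g'\cdot r)\cdot(g'\star F)$. This is the only step where the full $G$-module-algebra structure on $R$ (as opposed to a mere $K$-module-algebra structure) is actually used, and it is what makes the whole argument close up.
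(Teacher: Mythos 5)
Your proposal is correct and follows essentially the same route as the paper: both proofs hinge on endowing $\operatorname{Ind}_K^G(M)$ with an $R$--module structure (your formula $(r\cdot F)(g)=(g^{-1}\cdot r)\cdot F(g)$ is exactly the paper's $r\cdot\bigl(\sum f_i\otimes m_i\bigr)=\sum \mathcal S(r_1)f_i\otimes r_0\cdot m_i$ rewritten in function notation), then combine observability, the identity ${}^G\operatorname{Ind}_K^G(M)={}^K\!M$, and the unipotence of $(G,R)$, with part (2) obtained from the ideal-theoretic characterization of observability in one direction and from part (1) applied to $R=\Bbbk[G]$ in the other.
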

\begin{proof} \begin{enumerate}
  \item If $M$ is a non--zero object in ${}_{(K,\,R)}\mathcal M$, then
    $\operatorname{Ind}_K^G(M)$ an object in the category of
    $\bigl(G,\,\operatorname{Ind}_K^G(R)\bigr)$--modules that in
    accordance with Observation \ref{obse:obsefirstprop}is not
    zero. Using the tensor identity and the fact that $R$ is a
    $G$--module --see observation \ref{obse:firstproperties}-- we
    deduce that $\operatorname{Ind}_K^G(R) = {}^K\Bbbk [G] \otimes
    R$. Hence, one can view $\operatorname{Ind}_K^G(M)$ as an
    $R$--module via the natural inclusion of $R$ into ${}^K\Bbbk [G]
    \otimes R$. In explicit terms if $\sum f_i \otimes m_i \in
    \operatorname{Ind}_K^G(M)={}^K(\Bbbk [G] \otimes M)$ and $r \in
    R$, then $r\cdot (\sum f_i \otimes m_i)=\sum S(r_1)f_i \otimes
    r_0\cdot m_i$. Considering $\operatorname{Ind}_K^G(M)$ as a object
    in $\modGR$ and using the unipotency of $G$ in $R$ we
    conclude that ${}^KM = {}^G\operatorname{Ind}_K^G(M) \neq 0$.
  \item An inclusion $K \subseteq G$ is observable if and only if for
    all $K$--stable ideals $I \subseteq \Bbbk [G]$, $I^K = 0$ implies
    that $I = 0$ (see \cite[Theorem 2.9]{kn:nosotros}). Hence the
    condition of unipotency of the action implies the observability. 
Conversely, if $K \subseteq G$ is observable, using the fact that $G$
is unipotent in $G$ and the transitivity result just proved, we
conclude that $K$ acts unipotently on $G$. 
\end{enumerate}
\end{proof}

\begin{obse}\label{obse:unipinicial}\
\begin{enumerate}
\item \label{item:unipusual}It is clear that in the case that $R$ is
  the base field $\Bbbk $, the above concept of unipotent action
  coincides with the concept of unipotent group. In particular, it
  follows directly (or from Lemma \ref{lema:changeofbasis}) that if
  $G$ is a unipotent group, then the action of $G$ on an arbitrary
  algebra $R$ is also unipotent.
\item \label{item:uniprelunip} Suppose that $G$ is an affine
algebraic group and $R$ a $G$--module algebra such that
$(G,R)$ is unipotent. Assume there is a $G$--equivariant
augmentation $\varepsilon:R \rightarrow \Bbbk $. Using
Lemma \ref{lema:changeofbasis} and the previous
consideration, we deduce that the group $G$ is unipotent. In
particular if the group $G$ acts unipotently on an affine
variety and has a fixed point, then $G$ is a unipotent
group.
\item If follows from the fact that exact subgroups are
observable and from Theorem \ref{theo:changeofgroup}, that in
the case of an inclusion $K \subseteq G$ with $K$ exact in
$G$, if a pair $(G,R)$ is unipotent, the same happens with
the pair $(K,R)$. In particular, this is true for the
situation that $K \triangleleft G$.
\end{enumerate}
\end{obse}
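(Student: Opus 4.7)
The proposal is to handle the three parts in turn, each reducing to a short appeal to either Lemma~\ref{lema:changeofbasis} or Theorem~\ref{theo:changeofgroup}.

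For part (\ref{item:unipusual}), I would first note that when $R = \Bbbk$, the category ${}_{(G,\Bbbk)}\mathcal M$ is literally $\modG$, so the unipotency condition ``every nonzero object has nonzero $G$-fixed part'' is verbatim the definition of a unipotent group. For the second assertion in (1), given any $G$-module algebra $R$, the structural map $\Bbbk \to R$ is a $G$-equivariant homomorphism of $G$-module algebras (with $\Bbbk$ endowed with the trivial action). If $G$ is unipotent as a group, then by the first part the pair $(G,\Bbbk)$ is unipotent, and applying Lemma~\ref{lema:changeofbasis} to the inclusion $\Bbbk \hookrightarrow R$ in the opposite direction --- the statement of the lemma says that unipotency transfers along $G$-algebra maps $R \to S$ from source to target --- gives the conclusion. (One should read the lemma carefully here: the direction is from $R$ to $S$, and with $R=\Bbbk$ and $S$ arbitrary this is exactly what we need.)

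For part (\ref{item:uniprelunip}), the $G$-equivariant augmentation $\varepsilon : R \to \Bbbk$ is a homomorphism of rational $G$-module algebras. Applying Lemma~\ref{lema:changeofbasis} directly to $\varepsilon$ (source $R$, target $\Bbbk$), unipotency of $(G,R)$ transfers to unipotency of $(G,\Bbbk)$; by part (\ref{item:unipusual}) this says $G$ is a unipotent group. For the geometric formulation, a fixed point $x_0 \in X$ provides the $G$-equivariant evaluation $\operatorname{ev}_{x_0}: \Bbbk[X] \to \Bbbk$, reducing to the algebraic case just settled.

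For part (3), the only ingredient beyond Theorem~\ref{theo:changeofgroup}(1) is the implication ``exact $\Rightarrow$ observable'' for a closed subgroup $K \subseteq G$. This is standard (an exact subgroup has $G/K$ affine, hence quasi-affine, which by Observation~\ref{obse:obsefirstprop} is equivalent to observability). Once this is in hand, Theorem~\ref{theo:changeofgroup}(1) applied to the observable inclusion $K \subseteq G$ gives that $(K,R)$ is unipotent whenever $(G,R)$ is. The case of a normal subgroup $K \triangleleft G$ is a special case, since normal subgroups are exact.

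There is no real obstacle here: each of the three statements is a one-line corollary of the tools already established, and the note is essentially a record of functorial consequences of Definition~\ref{defi:unipotentaction}. The only point requiring a small check is making sure one invokes Lemma~\ref{lema:changeofbasis} in the correct direction for part~(\ref{item:unipusual}) and the standard fact ``exact implies observable'' for part~(3).
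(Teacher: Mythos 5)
Your proposal is correct and follows exactly the route the paper intends: part (1) via the identification ${}_{(G,\Bbbk)}\mathcal M=\modG$ and Lemma~\ref{lema:changeofbasis} applied to the structure map $\Bbbk\to R$, part (2) via the same lemma applied to $\varepsilon$ (resp.\ the evaluation at a fixed point), and part (3) via ``exact $\Rightarrow$ $G/K$ affine $\Rightarrow$ quasi-affine $\Rightarrow$ observable'' combined with Theorem~\ref{theo:changeofgroup}. Your cautionary remarks about the direction of Lemma~\ref{lema:changeofbasis} are exactly the right points to check, and they are handled correctly.
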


Next we present a relative version of Kostant--Rosenlicht
theorem concerning closed orbits of unipotent groups (see
for example
\cite{kn:rosenlicht,kn:nosotros,kn:hobook,kn:humph}
for the original result).

\begin{theo}[Generalized Kostant--Rosenlicht
theorem]\label{theo:genkr} Let $G$ be an affine algebraic
group and $X$ an affine $G$--variety. Assume that the action
of $G$ on $X$ is unipotent, then all the orbits of $G$ on
$X$ are closed. Moreover, if the action is separable, all
the orbits are of the form $G/H$ where $H$ is an
unipotent closed subgroup of $G$.
\end{theo}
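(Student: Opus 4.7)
The plan is to adapt the classical Kostant--Rosenlicht argument. Although $G$ itself need not be unipotent as a group, the unipotency of the \emph{action} furnishes enough non-zero $G$-invariants inside any non-zero $G$-stable ideal on a $G$-subvariety, which is precisely what the classical proof requires.

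For the first assertion, I would fix $x\in X$, consider the orbit $Y=G\cdot x$ (a locally closed $G$-stable subset), and look at its boundary $\partial Y=\overline{Y}\setminus Y$. Supposing for contradiction that $Y$ is not closed, $\partial Y$ is a non-empty, proper, closed, $G$-stable subvariety of $\overline{Y}$. I then transfer the hypothesis along the closed immersion $\overline{Y}\hookrightarrow X$: the surjective $G$-equivariant algebra map $\Bbbk[X]\twoheadrightarrow \Bbbk[\overline{Y}]$ together with Lemma \ref{lema:changeofbasis} yields that $(G,\overline{Y})$ is unipotent. Now the $G$-stable ideal $I\subseteq\Bbbk[\overline{Y}]$ of functions vanishing on $\partial Y$ is a non-zero object of $\modGR$ with $R=\Bbbk[\overline{Y}]$, so by unipotency there exists $0\neq f\in {}^G\!I$. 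Since $f$ is $G$-invariant it takes the constant value $f(x)$ on $Y$; density of $Y$ in $\overline{Y}$ forces $f$ to equal $f(x)$ everywhere on $\overline{Y}$; but $f$ vanishes on $\partial Y\neq\emptyset$, giving $f(x)=0$, which contradicts $f\neq 0$.

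For the second assertion, once orbits are known to be closed, any orbit $G\cdot x$ is affine (as a closed subvariety of the affine variety $X$), and separability yields an algebraic isomorphism $G\cdot x\cong G/H$ with $H=\mathrm{Stab}_G(x)$. So $G/H$ is affine; in particular it is quasi-affine, and Observation \ref{obse:obsefirstprop} ensures that $H$ is observable in $G$. Applying Lemma \ref{lema:changeofbasis} to the closed inclusion $G\cdot x\hookrightarrow X$ gives that $(G,G/H)$ is unipotent; Theorem \ref{theo:changeofgroup}(1) then transfers unipotency to the $H$-action on $G/H$; finally $1H$ is an $H$-fixed point of $G/H$, so Observation \ref{obse:unipinicial}(\ref{item:uniprelunip}) concludes that $H$ is a unipotent group.

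The main obstacle is conceptual rather than computational: one must locate the correct $G$-invariant, namely a non-zero element of the ideal of the boundary, and this requires transferring the unipotent-action hypothesis from $X$ to $\overline{Y}$ via Lemma \ref{lema:changeofbasis} rather than relying on any unipotency of $G$ itself. A small subtle point is the assertion that a $G$-invariant $f\in\Bbbk[\overline{Y}]$ is constant: this uses that $f$ takes the single value $f(x)$ on the whole orbit $Y$ (by invariance) together with density of $Y$ in $\overline{Y}$, and works regardless of the connectedness of $G$ since each irreducible component of $\overline{Y}$ meets $Y$ in a dense subset.
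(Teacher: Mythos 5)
Your proof is correct and follows essentially the same route as the paper: both arguments produce a non-zero $G$-invariant inside a $G$-stable ideal vanishing on the orbit boundary (you use the full vanishing ideal of $\partial Y$, the paper the ideal generated by one function vanishing on the boundary but not at a point of the orbit), and derive a contradiction from the invariant being constant on the dense orbit yet zero on the boundary. The second assertion is also handled exactly as in the paper, with your explicit observability check being a welcome extra precision when invoking Theorem \ref{theo:changeofgroup}.
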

\begin{proof} Assume that $O$ is an orbit of the action of
  $G$ on $X$, call $Y=\overline O$ and $C=
  \overline{O}\setminus O$. If $O \neq \overline{O}$, then
  $C$ is closed non empty and $G$--stable, $\emptyset \neq C
  \subseteq Y \subseteq X$.  By our assumption, we can find
  $f \in \Bbbk [Y]$ that is zero on $C$ and not zero in a
  point $p \in O$.  Using Lemma \ref{lema:changeofbasis} we
  may assume that $Y=X$.  Consider the non zero $G$--stable
  ideal $I$ of $\Bbbk [X]$ generated
  by $f$. A generic element $g \in I$ is of the form:
  $g=\sum g_i (x_i \cdot f)$, with $g_i \in \Bbbk
  [X]$ and $x_i \in G$.  Using the hypothesis of unipotency
  of the action of $G$ on $X$ we can guarantee the existence
  of element $0 \neq g \in {}^G\!I$. Then, $g \in \Bbbk [X]$ is constant in the orbit $O$ of $p$, and then it is
  constant in $X$, the closure of the orbit. It is clear by
  the construction of $f$ that for an arbitrary point $q \in
  C$, $g(q)=\sum g_i(q)f(q \cdot x_i)= 0$. Then, $g=0$ and
  this is a contradiction.

  As to the proof of the last assertion we proceed as
  follows.  Let $Y$ be a (closed) orbit. Hence for some $H
  \subseteq G$, the homogeneous space $G/H$ is an
  affine variety isomorphic to $Y$. It follows from Lemma
  \ref{lema:changeofbasis} and Theorem \ref{theo:changeofgroup}, that the
  action of $H$ on $G/H$ is unipotent. Since $H1$
  is a fixed point on $G/H$ for this action, we deduce that
  $H$ is a unipotent group (see Observation
  \ref{obse:unipinicial} , (\ref{item:uniprelunip})).
\end{proof}

\begin{obse} \
  \begin{enumerate}
\item If $G$ acts in a unipotent way on $X=\{\star\}$, then
  by Observation \ref{obse:unipinicial},
  (\ref{item:unipusual}) {\em all} the actions of $G$ on
  arbitrary varieties are unipotent. Then, all the orbits of
  $G$ are closed in accordance to the above generalized
  version of Kostant--Rosenlicht theorem and we recover the
  classical theorem.
\item The last assertion of Theorem \ref{theo:genkr} should
  be interpreted as a generalization of the fact that a
  closed subgroup of a unipotent group is always unipotent.
\end{enumerate}
\end{obse}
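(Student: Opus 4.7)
The observation is in two parts, each of which is essentially a specialisation of previously established results. The plan is simply to combine Observation \ref{obse:unipinicial}(\ref{item:unipusual}) with Theorem \ref{theo:genkr} in two different special cases, and there is no real proof content beyond identifying the correct specialisation in each case.

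For part (1), I will first rewrite \emph{unipotent action on the one--point variety} $\{\star\}$ in terms of the coordinate ring. Since $\Bbbk[\{\star\}]=\Bbbk$ is the base field carrying the trivial $G$--action, the definition of unipotent pair specialises to saying that $(G,\Bbbk)$ is unipotent, and by Observation \ref{obse:unipinicial}(\ref{item:unipusual}) this is equivalent to $G$ being a unipotent group. In the reverse direction, the same item of Observation \ref{obse:unipinicial} guarantees that a unipotent group acts unipotently on every affine $G$--variety $Y$. Applying Theorem \ref{theo:genkr} to each such $Y$ then yields closed orbits, which is precisely the statement of the classical Kostant--Rosenlicht theorem. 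All the work is done by the two implications in Observation \ref{obse:unipinicial}(\ref{item:unipusual}) bracketing the input of Theorem \ref{theo:genkr}.

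For part (2), the classical assertion to be recovered is that every closed subgroup $H$ of a unipotent group $G$ is itself unipotent. I will deduce this as the specialisation of the last clause of Theorem \ref{theo:genkr} to the action of $G$ on the homogeneous space $X=G/H$ by right translations (which is an affine variety under the standing hypotheses, in particular in characteristic zero via Matsushima). Since $G$ is unipotent, Observation \ref{obse:unipinicial}(\ref{item:unipusual}) makes this action unipotent; since it is transitive, $G/H$ is itself the only orbit and is therefore closed, and the stabiliser at the base point is $H$. Assuming the orbit map $G\to G/H$ is separable, the last assertion of Theorem \ref{theo:genkr} forces $H$ to be unipotent. This exhibits the isotropy statement of Theorem \ref{theo:genkr} as a genuine generalisation: rather than saying that stabilisers inside a unipotent group are unipotent, it says that stabilisers of closed orbits of \emph{any} unipotent action are unipotent.

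No step poses a real obstacle; the entire exercise consists of plugging these two specialisations into Observation \ref{obse:unipinicial} and Theorem \ref{theo:genkr}. The only minor point of care is separability in part (2), which is automatic in characteristic zero and in general is already part of the hypothesis of the last clause of Theorem \ref{theo:genkr}.
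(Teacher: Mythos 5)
Part (1) of your proposal is correct and is exactly the paper's (implicit) reasoning: a unipotent pair $(G,\{\star\})$ means $(G,\Bbbk)$ is unipotent, which by Observation \ref{obse:unipinicial}, (\ref{item:unipusual}) forces $G$ to be a unipotent group; the same item then makes every action of $G$ unipotent, and Theorem \ref{theo:genkr} closes the orbits. Nothing to add there.

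In part (2) you go further than the paper does, and the extra step is where the trouble lies. The paper only claims that the last assertion of Theorem \ref{theo:genkr} \emph{should be interpreted} as a generalization of the classical subgroup fact; you instead try to \emph{derive} the classical fact by applying the theorem to $X=G/H$. For that you must first know that $G/H$ is an affine $G$--variety, and your justification --- ``via Matsushima'' --- is the wrong tool: Matsushima's criterion (Corollary \ref{coro:matsu}) concerns a \emph{reductive} ambient group $G$ and characterizes affineness of $G/K$ by reductivity of $K$; it says nothing about a unipotent $G$. Worse, the standard proofs that $G/H$ is affine for $G$ unipotent (Rosenlicht, or the route through observability in Observation \ref{obse:obsefirstprop}, which uses that $H$ has no nontrivial characters) all presuppose that the closed subgroup $H$ is itself unipotent --- precisely the fact you are trying to recover --- so as a derivation your argument is circular, or at least rests on an unproved affineness claim. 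The clean way to phrase it, and the one the paper intends, is purely interpretive: for $G$ unipotent every action is unipotent, so the theorem says that stabilizers of (automatically closed, separable) orbits are unipotent; the classical statement about closed subgroups is the instance where the orbit is a homogeneous space, with the affineness of that homogeneous space taken as known from the classical theory rather than re-proved here.
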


Recall the following definition that is useful in relation
to the concept of observability (for the particular
group--subgroup situation see the beginning of Section
\ref{section:grsubgr}).

\begin{defi} \label{defi:extendable} Let $G$ be an affine
  algebraic group acting regularly on an affine variety
  $X$. We say that a character $\chi:G \rightarrow \Bbbk^*$ is
  extendible (to $X$), if there is a non zero 
  $\chi$--semi--invariant polynomial in $\Bbbk [X]$. The
  multiplicative monoid of extendible characters will be
  denoted as $\mathcal{E}_G(X)$ and the group of all
  characters as $\mathcal{X}(G)$.
\end{defi}

Next lemma generalizes the fact that unipotent groups have
no characters.

\begin{lema}
  \label{lem:unipexten} Let $G$ be an affine algebraic group
  acting regularly on an affine variety $X$. If the action
  of $G$ on $X$ is unipotent, then
  $\mathcal{E}_G(X)=\mathcal{X}(G)$.
\end{lema}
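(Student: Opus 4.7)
The plan is to use the definition of unipotency directly, applied to a twist of $\Bbbk[X]$ by a one-dimensional representation. Fix an arbitrary character $\chi \in \mathcal{X}(G)$ and let $\Bbbk_{\chi^{-1}}$ denote the one-dimensional $G$-module on which $G$ acts via $\chi^{-1}$. I would form the $(G,\Bbbk[X])$-module $M = \Bbbk[X] \otimes \Bbbk_{\chi^{-1}}$, where the $\Bbbk[X]$-structure is via the first tensor factor and $G$ acts diagonally. A quick verification shows the compatibility condition of Definition of $\modGX$: if $r\in \Bbbk[X]$, $f\otimes v \in M$ and $g\in G$, then both $g\cdot\bigl(r(f\otimes v)\bigr)$ and $(g\cdot r)\bigl(g\cdot (f\otimes v)\bigr)$ equal $\chi^{-1}(g)(g\cdot r)(g\cdot f)\otimes v$, so $M \in \modGX$.

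Next I would apply the unipotency hypothesis: since $M\neq 0$, we have ${}^G\!M\neq 0$. Because $\Bbbk_{\chi^{-1}}$ is one-dimensional, fixing a nonzero $v\in \Bbbk_{\chi^{-1}}$, every element of $M$ has the unique form $f\otimes v$ with $f\in \Bbbk[X]$. A nonzero $G$-fixed element $f\otimes v$ satisfies
\[
f\otimes v \;=\; g\cdot(f\otimes v) \;=\; (g\cdot f)\otimes \chi^{-1}(g)v \;=\; \chi^{-1}(g)(g\cdot f)\otimes v
\]
for every $g\in G$, so $g\cdot f = \chi(g) f$. Thus $f$ is a nonzero $\chi$-semi-invariant polynomial in $\Bbbk[X]$, which means $\chi \in \mathcal{E}_G(X)$. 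Since $\chi$ was arbitrary, $\mathcal{X}(G)\subseteq \mathcal{E}_G(X)$, and the reverse inclusion is part of the definition, giving the equality.

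There is no real obstacle here: the argument is simply the standard device of detecting semi-invariants by passing to a twisted module and invoking the functorial property that is the very definition of a unipotent action. The only point that warrants care is the verification that $M$ is indeed an object of $\modGX$ with the stated $\Bbbk[X]$-action, and that a $G$-fixed element of $M$ translates exactly into the semi-invariance relation with character $\chi$ (not $\chi^{-1}$), which is why the twist by $\chi^{-1}$ rather than $\chi$ is used.
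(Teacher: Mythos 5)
Your proof is correct and follows essentially the same route as the paper: the module $\Bbbk[X]\otimes\Bbbk_{\chi^{-1}}$ you construct is exactly the paper's twisted module $\Bbbk_\chi[X]$ with action $x\rightharpoonup_\chi f=\chi^{-1}(x)\,x\cdot f$, and in both arguments unipotency supplies a nonzero $G$-fixed element, which is precisely a nonzero $\chi$-semi-invariant. The only difference is that you spell out the compatibility check and the translation between fixed vectors and semi-invariants, which the paper leaves implicit.
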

\begin{proof} Let $\chi:G \rightarrow \Bbbk ^* \in
  \mathcal{X}(G)$, be a rational character. We call $\Bbbk_\chi[X]$ the $(G,\Bbbk [X])$--module consisting of
  $\Bbbk [X]$ endowed with the following ``twisted''
  $G$--module structure: if $f \in \Bbbk _\chi[X]$ and $x
  \in G$, $x \rightharpoonup_\chi f=\chi^{-1}(x) x\cdot f$
  where $(x,f) \mapsto x \cdot f$ is the original action of
  $G$ on $\Bbbk [X]$.  An element $f \in 
  {}^G\,\Bbbk_\chi[X]$ is a $\chi$--semi--invariant polynomial
  on $X$.  Hence, all characters are extendible.
\end{proof}

Next we compare the concept of unipotent action with the concept of
observable action as defined in \cite{kn:oaag}.

\begin{defi}( \cite{kn:oaag}) An action of an affine algebraic group
  $G$ on an affine variety $X$ is said to be {\em observable} if for
  every non zero $G$--stable ideal $I\subseteq \Bbbk [X]$, then
  ${}^GI\neq \{0\}$. 
\end{defi}
\begin{obse}\label{obse:unipobse}\

\begin{enumerate}
\item It follows immediately Definition
  \ref{defi:unipotentaction} that unipotent actions are observable. It
  is also clear that in the case of a group and a subgroup the action
  is observable if and only if the group $K$ is observable in $G$.
It is also clear that in this context, the concept of observable
action and of unipotent action coincide (see \ref{theo:changeofgroup}). 
\item In accordance with \cite[Corollary 3.14]{kn:oaag} an
  action of a connected affine algebraic group $G$ on a
  factorial variety $X$ is observable if and only if the
  following two conditions hold: (i) there is an open subset
  of closed orbits of maximal dimension; (ii) $\mathcal
  E_G(X)$ is a group.  

  In the presence of the stronger condition of unipotency of
  the action, the above properties are satisfied: (i) all
  the orbits are closed; (ii) $\mathcal E_G(X)$ is the group
  of all characters.

\item The action of $\Bbbk ^*$ on $\Bbbk ^2$ given by
  $(a,b)\cdot t=(at,bt^{-1})$ is not unipotent as it has non
  closed orbits but it is observable as follows from
  \cite[Theorem 4.4]{kn:oaag} --also a direct verification
  shows the validity of conditions (i) and (ii) in this
  context.
\end{enumerate}
\end{obse}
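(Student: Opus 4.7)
My plan is to exhibit, for any character $\chi$ of $G$, a nonzero $\chi$--semi--invariant polynomial on $X$ by twisting the module structure and then invoking unipotency of the action.

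First, I would fix a rational character $\chi:G\to \Bbbk^*$ and construct a new object $\Bbbk_\chi[X]$ in the category $\modGX$ whose underlying $\Bbbk[X]$--module is $\Bbbk[X]$ itself, but whose $G$--action is twisted by $\chi^{-1}$: namely $x\rightharpoonup_\chi f := \chi^{-1}(x)\,(x\cdot f)$. A quick check shows this is still a rational $G$--action, and compatibility with the $\Bbbk[X]$--module structure follows because $\chi$ is a scalar (so it commutes with multiplication) and the original action satisfies $x\cdot(gf)=(x\cdot g)(x\cdot f)$.

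Since $\Bbbk_\chi[X]\neq 0$ and lies in $\modGX$, the unipotency hypothesis forces ${}^G\Bbbk_\chi[X]\neq 0$. Unwinding the twist, an element $f\in {}^G\Bbbk_\chi[X]$ is precisely a nonzero polynomial in $\Bbbk[X]$ satisfying $x\cdot f=\chi(x)f$ for all $x\in G$, i.e.\ a nonzero $\chi$--semi--invariant. Therefore $\chi\in \mathcal{E}_G(X)$, and since $\chi$ was arbitrary we conclude $\mathcal{X}(G)\subseteq \mathcal{E}_G(X)$; the reverse inclusion is immediate from the definitions.

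There is no real obstacle: the whole argument is a single twist plus an invocation of the definition of unipotency. The only thing to be careful about is verifying that the twisted action genuinely defines an object of $\modGX$, which is straightforward since the twist is by a one--dimensional character and therefore intertwines trivially with the $\Bbbk[X]$--module structure.
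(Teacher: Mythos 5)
Your twisting argument is correct, and it is verbatim the paper's own proof of Lemma \ref{lem:unipexten}: one forms $\Bbbk_\chi[X]$, checks it is a nonzero object of $\modGX$ (the compatibility $x\rightharpoonup_\chi(fm)=(x\cdot f)(x\rightharpoonup_\chi m)$ holds exactly because the twist is by a scalar, as you note), and unipotency hands you a nonzero $G$--fixed element, i.e.\ a nonzero $\chi$--semi--invariant. So the sub-claim ``$\mathcal E_G(X)=\mathcal X(G)$'' is fully established, by the same route as the paper.

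The gap is one of coverage: the statement under review is the whole Observation, and your proposal addresses only clause (ii) of its second item. Left untouched are: (a) the assertion that unipotent actions are observable, which needs the (one-line) remark that a nonzero $G$--stable ideal $I\subseteq\Bbbk[X]$ is a nonzero object of $\modGX$, so unipotency forces ${}^GI\neq\{0\}$; (b) the identification, in the group--subgroup case, of observability of the action with observability of $K$ in $G$ and with unipotency of the action, which the paper gets from Theorem \ref{theo:changeofgroup}; (c) the claim that all orbits of a unipotent action are closed --- this is not a formal consequence of anything in your argument but is the generalized Kostant--Rosenlicht theorem (Theorem \ref{theo:genkr}), a substantive result whose proof uses unipotency applied to the ideal of the boundary of an orbit; and (d) the verification in item (3) that the $\Bbbk^*$--action on $\Bbbk^2$ has a non-closed orbit (e.g.\ the orbit of $(1,0)$) yet is observable. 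Without at least (a) and (c), the Observation as stated is not proved; the character computation alone is the smaller half of item (2).
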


Next we show that the only actions that are at the same time
unipotent and linearly reductive, are trivial in the sense
of Theorem \ref{theo:uni+gr=trivial}.

\begin{theo}\label{theo:uni+gr=trivial} Let $G$ be an affine
  algebraic group acting in a separable way on the affine
  variety $X$. If the action of $G$ on $X$ is unipotent and
  linearly reductive, then all the orbits are closed and
  isomorphic to $G$.
\end{theo}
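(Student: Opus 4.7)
My plan is to combine the generalized Kostant--Rosenlicht theorem with the Matsushima-type transitivity result for stabilizers, and then rule out non-triviality of the stabilizer by using that the only group which is simultaneously unipotent and linearly reductive is trivial.

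First, since the action of $G$ on $X$ is unipotent, the second part of Theorem \ref{theo:genkr} (in combination with the separability hypothesis) immediately gives that every orbit is closed and has the form $Y \cong G/H$ for some closed unipotent subgroup $H \subseteq G$. In particular $Y$ is affine, being a closed subvariety of the affine variety $X$, so $G/H$ is an affine homogeneous space.

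Next I would transfer the linear reductivity of the action from $X$ to the orbit $Y$. The closed embedding $Y \hookrightarrow X$ gives a surjective morphism of rational $G$--module algebras $\Bbbk[X] \to \Bbbk[Y]$, so by Example \ref{example:inicial}, \eqref{item:goingup} the action of $G$ on $Y$ is linearly reductive. The coset $1H$ is an $H$--fixed point of $Y = G/H$, and $G/H$ is affine, so Corollary \ref{coro:goingup3}, (1) applies and yields that $H$ is linearly reductive.

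At this point $H$ is both unipotent and linearly reductive, and I need to conclude $H = \{e\}$. The cleanest way within the framework of this paper is to apply Theorem \ref{theo:caraclrunip2} to the trivial (hence linearly reductive, by the reductivity of $H$) action of the unipotent group $H$ on the one--point variety $\{\star\}$: this forces an isomorphism $\{\star\} \cong L \times H$, so $H$ must itself be a point. Therefore $Y \cong G/\{e\} = G$, and this identification is $G$--equivariant, completing the proof.

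The only step that requires any care is checking that the hypothesis ``$H$ has a fixed point on $X$'' of Corollary \ref{coro:goingup3} is being used correctly --- one might worry that we need to apply it with the ambient variety rather than with the orbit, but the restriction to $Y$ is legitimate precisely because $Y$ is affine, $G$--stable, and the linear reductivity of $(G,X)$ descends to $(G,Y)$ via Example \ref{example:inicial}, \eqref{item:goingup}. Everything else is a straightforward chain of applications of results already established.
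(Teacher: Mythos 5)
Your proof is correct and follows essentially the same route as the paper: generalized Kostant--Rosenlicht gives closed orbits $G/H$ with $H$ unipotent, linear reductivity descends to the orbit, and the stabilizer is then both unipotent and linearly reductive, hence trivial. The only cosmetic differences are that the paper cites Theorem \ref{theo:homo} where you invoke Corollary \ref{coro:goingup3}, and that you spell out (via Theorem \ref{theo:caraclrunip2}) the final ``hence it is trivial'' step that the paper leaves implicit.
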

\begin{proof} Consider an orbit $O$ of $G$ on $X$. The
  generalized theorem of Kostant--Rosenlicht --Theorem
  \ref{theo:genkr}-- guarantees that all orbits are closed
  and using Example \ref{example:inicial},
  \eqref{item:goingup} and Lemma \ref{lema:changeofbasis} we
  deduce that the action of $G$ on $O$ is also linearly
  reductive and unipotent. By the separability hypothesis,
  we deduce that the orbits are of the form $G/G_x$ for some
  closed subgrup of $G$ and using Theorem \ref{theo:homo}
  and Theorem \ref{theo:genkr} we deduce that $G_x$ is at
  the same time linearly reductive and unipotent. Hence it
  is trivial.  
\end{proof}


\begin{thebibliography}{99}


\bibitem{kn:hmbb} A. Bialynicki-Birula, G. Hochschild,
D. Mostow, {\em Extension of representations of algebraic
linear groups.}  Amer. J. Math., vol 85, 1963, pp. 131--144.





\bibitem{kn:CPS} E. Cline, B. Parshall, L. Scott, {\em
Induced modules and affine quotients}, Math. Ann., vol 230,
1, 1977, pp. 1--14.

\bibitem{kn:CPSMackey} E. Cline, B. Parshall, L. Scott,{\em
A Mackey imprimitivity theory for algebraic groups},
Math. Z. 182 (1983), pp. 447-471.

\bibitem{kn:doi1} Y. Doi, {\em On the structure of relative
Hopf modules.} Comm. Algebra, vol 11, no. 3, 1983,
pp. 243--253.

\bibitem{kn:doi2} Y. Doi, {\em Cleft module algebras and
Hopf modules.}  Comm. Algebra, vol 12, no. 9-10, 1984,
pp. 1155--1169.

\bibitem{kn:doi3} Y. Doi, {\em Algebras with total
integrals.}  Comm. Algebra, vol 13, no. 10, 1985,
pp. 2137--2159.

\bibitem{kn:firstwork} W. Ferrer Santos, {\em A
generalization of the concept of linearly and geometrically
reductive group.} Topics in Algebraic Geometry. Proceedings
of the workshop on algebraic geometry. Eds. L. Brambila,
X. G\'omez-Mont. Guanajuato, Mexico, 1989.  Aportaciones
Matem\'aticas, Notas de Investigaci\'on , vol 5. Sociedad
Matem\'atica Mexicana. 1992.

\bibitem{kn:nosotros} W. Ferrer Santos, A. Rittatore, {\em
Actions and Invariants of Algebraic Groups.}  Series: Pure
and Applied Mathematics, Vol 268, Dekker-CRC Press, Florida,
2005.


\bibitem{kn:gross} F.D. Grosshans, {\em Algebraic
homogeneous spaces and invariant theory.} Lecture Notes in
Mathematics, Vol. 1673, Springer--Verlag, Berlin, 1997.



\bibitem{kn:matsuhab} W.J. Haboush, {\em Homogeneous vector
bundles and reductive subgroups of reductive algebraic
groups.}  Amer. J. Math., Vol 100, 1978, pp. 1123--1137.




\bibitem{kn:ho}G. Hochschild, {\em Cohomology of algebraic
linear groups.} Illinois J. Math. vol 5, 1961, pp 492--519.

\bibitem{kn:hobook} G. Hochschild, {\em Basic theory of
algebraic groups and Lie algebras.}  Graduate Texts in
Mathematics, Vol 75, Springer-Verlag, New York, 1981.

\bibitem{kn:humphlevi} J.E. Humphreys,\emph{Existence of
    Levi factors in certain algebraic groups.} Pacific
  Journal of Mathematics, Vol 23, N 3, (1967), pp 543--546,

\bibitem{kn:humph} J.E. Humphreys, {\em Linear Algebraic
Groups.}  Graduate Texts in Mathematics, Vol 21,
Springer-Verlag, New York, 1975.

\bibitem{kn:jantzen} J.C. Jantzen, {\em Representations of
algebraic groups.} Pure and Applied Mathematics, Vol 131,
Academic Press Inc. Boston, 1987.

\bibitem{kn:matsu} Y. Matsushima, {\em Espaces homog\`enes
de Stein des groupes de Lie complexes.} Nagoya Math. Jour.,
vol 16, 1960, pp 205--218.

\bibitem{kn:Mumford} D. Mumford, {\em Geometric invariant
theory.}  Ergebnisse der Mathematik und ihrer Grenzgebiete,
Vol 34, Springer-Verlag, Berlin, 1965.

\bibitem{kn:News} P.E. Newstead, {\em Introduction to moduli
problems and orbit spaces.}  Tata Institute of Fundamental
Research Lectures on Mathematics and Physics, Vol. 51, Tata
Institute of Fundamental Research, Bombay, 1978.

\bibitem{kn:mcninchlevi} G.J. McNinch, {\em Levi
    decompositions of a linear algebraic
    group}. Transf. Groups, vol 15, N 4, (2010), pp
  937--964.
 

\bibitem{kn:oaag} L.Renner, A. Rittatore, {\em Observable
actions of algebraic groups}. Transform. Groups, vol 14 ,
2010, pp 985--999. arXiv:0902.0137v2[math.AG].

\bibitem{kn:rosenlicht} M. Rosenlicht, {\em On quotient
varieties and the affine embedding of certain homogeneous
spaces.}  Trans. Amer. Math. Soc. vol 101, 1961, pp
211--223.

\bibitem{kn:spr} T.A. Springer, {\em Invariant theory. }
Lecture Notes in Mathematics, Vol. 585, Springer-Verlag,
Berlin, 1977.
	
\end{thebibliography}
\end{document}